\newtheorem{theorem}{Theorem}[section]
\newtheorem{corollary}[theorem]{Corollary}
\newtheorem{lemma}[theorem]{Lemma}
\theoremstyle{definition}
\newtheorem{definition}[theorem]{Definition}
\theoremstyle{definition}
\newtheorem{remark}[theorem]{Remark}
\newtheorem{example}[theorem]{Example}
\theoremstyle{plain}
\newcommand{\thistheoremname}{}
\newtheorem{genericthm}[theorem]{\thistheoremname}
\newtheorem*{genericthm*}{\thistheoremname}
\newenvironment{namedthm*}[1]
  {\renewcommand{\thistheoremname}{#1}%
   \begin{genericthm*}}
  {\end{genericthm*}}
\newcommand\cG{\mathcal{G}}
\newcommand\cM{\mathcal{M}}
\newcommand\cS{\mathcal{S}}
\newcommand{\bA}{\mathbb{A}}
\newcommand{\bC}{\mathbb{C}}
\newcommand{\bP}{\mathbb{P}}
\newcommand{\bR}{\mathbb{R}}
\newcommand{\bZ}{\mathbb{Z}}
\newcommand{\bCP}{\mathbb{CP}}
\newcommand\ba{\mathbf{a}}
\newcommand\bb{\mathbf{b}}
\newcommand\bm{\mathbf{m}}
\newcommand\bn{\mathbf{n}}
\newcommand\boldr{\mathbf{r}}
\newcommand\bs{\mathbf{s}}
\newcommand\bv{\mathbf{v}}
\newcommand\bw{\mathbf{w}}
\newcommand\bx{\mathbf{x}}
\newcommand\bz{\mathbf{z}}
\newcommand\bzero{\mathbf{0}}
\newcommand{\btB}{{\mathbf{2B}}}
\newcommand{\sB}{\mathscr{B}}
\newcommand{\stB}{2\mathscr{B}}
\newcommand{\sP}{\mathscr{P}}
\newcommand{\on}{\operatorname}
\newcommand\id{\on{id}}
\newcommand{\comp}{C^2}
\renewcommand{\comp}{{\on{comp}}}
\newcommand{\seam}{{\on{seam}}}
\newcommand{\mk}{{\on{mark}}}
\newcommand{\incom}{\on{in}}
\newcommand{\inte}{{\on{int}}}
\renewcommand{\root}{{\on{root}}}
\newcommand{\rk}{{\on{rk}\:}}
\newcommand{\tree}{{\on{tree}}}
\newcommand{\br}{{\on{br}}}
\renewcommand{\top}{{\on{top}}}
\newcommand{\Bl}{{\on{Bl}}}
\newcommand{\SC}{\mathcal{SC}}
\newcommand{\SPT}{\mathcal{SPT}}
\newcommand{\stab}{{\on{stab}}}
\newcommand{\an}{{\on{an}}}
\newcommand{\Gr}{{\on{Gr}}}
\newcommand{\Spec}{\on{Spec}}
\newcommand{\amb}{{\on{amb}}}
\newcommand{\sat}{{\on{sat}}}
\newcommand*\bigcdot{{\mathpalette\bigcdot@{.5}}}
\newcommand*\bigcdot@[2]{{\mathbin{\vcenter{\hbox{\scalebox{#2}{$\m@th#1\bullet$}}}}}}
\newcommand\qu{/\kern-.7ex/} 
\newcommand\lqu{\backslash \kern-.7ex \backslash}
\newcommand{\ol}{\overline}
\newcommand{\ul}{\underline}
\newcommand{\sr}{\stackrel}
\newcommand{\wt}{\widetilde}
\newcommand{\eps}{\epsilon}
\def\hra{\hookrightarrow}
\def\lra{\longrightarrow}
\newcounter{qcounter}
\newcommand\quotient[2]{
        \mathchoice
            {
                \text{\raise1ex\hbox{$#1$}\Big/\lower1ex\hbox{$#2$}}%
            }
            {
                #1\,/\,#2
            }
            {
                #1\,/\,#2
            }
            {
                #1\,/\,#2
            }
    }
\newcommand\quoti[2]{
                \text{\raise1ex\hbox{$#1$}/\lower1ex\hbox{$\scriptstyle#2$}}
  }
\newcommand\quot[2]{
                \text{\raise1ex\hbox{$#1\!\!$}/\lower1ex\hbox{$\!\scriptstyle#2$}}
  }
\newcommand\quo[2]{
                \text{\raise.8ex\hbox{$\scriptstyle#1\!$}/\lower.8ex\hbox{$\!\scriptstyle#2$}}
  }
\newcommand\qq[2]{
                \text{\raise.8ex\hbox{$#1\!$}/\lower.8ex\hbox{$#2$}}
}
\newcommand{\fm}{Fulton--MacPherson }
\begin{document}

\title{A compactification of the moduli space of marked vertical lines in $\bC^2$}
\author{Nathaniel Bottman}
\address{Department of Mathematics, University of Southern California, 3620 S Vermont Ave, Los Angeles, CA 90089}
\email{\href{mailto:bottman@usc.edu}{bottman@usc.edu}}
\author{Alexei Oblomkov}
\address{Department of Mathematics and Statistics, University of Massachusetts at
Amherst, Lederle Graduate Research Tower, 710 N Pleasant Street, Amherst, MA 01003 USA}
\email{\href{mailto:oblomkov@math.umass.edu}{oblomkov@math.umass.edu}}
\date{\today} 

\maketitle

\begin{abstract}
For $r \geq 1$ and $\bn \in \bZ_{\geq0}^r\setminus\{\bzero\}$, we construct a proper complex variety $\ol{2M}_\bn$.
$\ol{2M}_\bn$ is locally toric, and it is equipped with a forgetful map $\ol{2M}_\bn \to \ol M_{0,r+1}$.
This space is a compactification of $2M_\bn$, the configuration space of marked vertical lines in $\bC^2$ up to translations and dilations.
In the appendices, we give several examples and show how the stratification of $\ol{2M}_\bn$ can be used to recursively compute its virtual Poincar\'{e} polynomial.
\end{abstract}

\section{Introduction}


For $r\geq1$ and $\bn \in \bZ_{\geq0}^r\setminus\{\bzero\}$, the first author constructed in \cite{b:2ass} a poset $W_\bn=W_\bn^\bR$ called a \emph{2-associahedron}.
In \cite{b:realization}, he constructed a compact, metrizable topological space $\ol{2M}_\bn=\ol{2M}_\bn^\bR$, which is stratified by $W_\bn$.
The realizations $\ol{2M}_\bn^\bR$ will play the role of domain moduli spaces in the first author's chain-level functoriality structure for the Fukaya category.

In this paper, we construct algebraic varieties $\ol{2M}_\bn^\bC$ that complete the following analogy:
\begin{align}
(r-2)\text{-dim.\ associahedron}
\::\:
\ol M_{0,r+1}
\:::\:
\ol{2M}_\bn^\bR
\::\:
\ol{2M}_\bn^\bC.
\end{align}
That is, one construction of the associahedron resp.\ $\ol M_{0,r+1}$ is as the compactified moduli space of $r$ unmarked points in $\bR$ resp.\ marked points in $\bC$.
$\ol{2M}_\bn^\bR$ is the compactified moduli space of $r$ vertical lines in $\bR^2$ with $n_i$ marked points on the $i$-th line from the left, so in an analogous way we define $\ol{2M}_\bn^\bC$ to be the compactified moduli space of $r$ labeled vertical complex lines in $\bC^2$ with $n_i$ labeled marked points on the $i$-th line.
Our main result is to construct $\ol{2M}_\bn^\bC$ as a variety with mild singularities:
\begin{theorem}
\label{thm:2Mn-bar}
Fix $r \geq 1$ and $\bn \in \bZ_{\geq0}^r\setminus\{\bzero\}$.
Then $\ol{2M}_\bn^\bC$, equipped with the atlas we define in \S\ref{ss:2Mn-bar_charts}, is a proper complex variety with toric singularities.
There is a forgetful morphism $\pi\colon \ol{2M}_\bn^\bC \to \ol M_{0,r+1}$, which on the open locus sends a configuration of lines and points to the positions of the lines, thought of as a configuration of points in $\bC$.
\end{theorem}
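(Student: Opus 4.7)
The strategy is in three stages: realize $\ol{2M}_\bn^\bC$ locally as affine toric varieties, check that these charts glue algebraically into a global variety, and then construct $\pi$ and use it to transfer properness from $\ol M_{0,r+1}$. The combinatorial backbone throughout is the poset $W_\bn$, whose elements index the strata of the space.

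For the chart around the stratum indexed by $T\in W_\bn$, the plan is the one carried out in \S\ref{ss:2Mn-bar_charts}: introduce one smoothing parameter per bubble of $T$, together with residual cross-ratios that specify a point of the stratum. Bubbles come in two flavors --- $x$-bubbles, which resolve collisions among the $r$ vertical lines, and $y$-bubbles, which resolve collisions of marked points on a single line --- and the monomial relations among their smoothing parameters cut out an affine toric variety. The stratum itself sits as a torus-fixed locus of this chart, which manifests the claimed toric singularities.

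The main technical obstacle is to verify that the overlap maps between such charts are biregular, so that the atlas assembles into a complex variety. The underlying topological space is already in hand from \cite{b:realization}, so set-theoretic overlaps are determined; compatibility then reduces to a combinatorial statement. Namely, any two strata whose closures meet admit a common refinement in $W_\bn$, and the chart of the refinement should dominate both coarser charts via monomial transformations in the smoothing parameters. Once compatibility is established on two-chart overlaps, the cocycle condition on triple overlaps follows formally from the poset structure of $W_\bn$, since three pairwise-comparable bubble configurations have a simultaneous common refinement.

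To construct $\pi$, I would send a configuration of vertical lines and marked points to the positions of its $r$ lines, regarded as an $(r+1)$-marked configuration on $\bCP^1$ with $\infty$ as the additional marked point. On a boundary stratum this amounts to forgetting all marked points on the lines and collapsing all $y$-bubbles, leaving a stable $(r+1)$-pointed tree in $\ol M_{0,r+1}$. On each chart $\pi$ is a morphism of toric varieties induced by projecting onto the $x$-smoothing parameters, hence algebraic. Properness follows by combining properness of $\ol M_{0,r+1}$ with properness of the fibers of $\pi$: each fiber is a product of spaces of the form $\ol{2M}$ associated with the $y$-bubbling data, which are proper by induction on $\sum n_i$. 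Alternatively, the valuative criterion can be checked directly on the toric charts, using that every one-parameter family in $\ol{2M}_\bn^\bC$ can be completed by first completing its image in $\ol M_{0,r+1}$ and then filling in the $y$-direction fiber by fiber.
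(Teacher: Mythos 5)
Your overall architecture — build toric local models from smoothing parameters, check that they glue algebraically, then establish properness — is in the right spirit, but there are three substantive gaps, one of which is a factual error about the structure of the atlas.

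First, the transition maps are \emph{not} monomial transformations in the smoothing parameters. Already for $\ol M_r^\bC$ the paper computes (Example~\ref{ex:Mr_morphism}) a transition of the form $(r,s)\mapsto(\tfrac{1-r}{r},\tfrac{rs}{1-r})$, which involves additive shifts and is therefore not a torus-equivariant map of toric charts. The reason the atlas works is not that the overlaps respect any torus structure; it is that each overlap map can be written as the composition $(2\varphi_{2T_2})^{-1}\circ 2C_{T^\top}\circ (2C_{T^\top})^{-1}\circ 2\varphi_{2T_1}$ of rational functions, as the paper does in Lemmas~\ref{lem:Mr_transitions_algebraic} and~\ref{lem:2Mn_transitions_algebraic}. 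Your ``common refinement'' picture, where a finer chart dominates two coarser ones monomially, does not describe what actually happens. (The cocycle condition is also a non-issue, since the paper first constructs $\ol{2M}_\bn^\bC$ as a set and then merely checks that the set-theoretic transition maps are morphisms.)

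Second, asserting that the binomial relations cut out an affine toric variety is where most of the real work lies. The coherences~\eqref{eq:2X_2T-tilde_coherences} define a binomial scheme, but binomial schemes need be neither reduced nor normal. The paper devotes \S\ref{ss:2Mn-bar_reduced_normal} to this: it presents $\wt{2X}_{2T}$ as $\Spec \bC[\bZ^N_{\geq 0}/L_{2T}]$, introduces canonical generators for the lattice $L_{2T}$ (Definition~\ref{def:can_gens}, Lemma~\ref{lem:can_gens}), and then proves by hand that $L_{2T}$ is saturated in $\bZ^N$ (reducedness) and that $\bZ^N_{\geq 0}/L_{2T}$ is saturated in its groupification (normality), using the discrete-geometric Lemma~\ref{lem:ineqs_real_to_int}. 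Your proposal does not indicate how you would establish these facts, which are needed before you can invoke any toric machinery.

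Third, your properness argument via $\pi$ differs from the paper's and, as written, does not close. Proper fibers do not imply a proper morphism, so ``the fibers of $\pi$ are products of smaller $\ol{2M}$'s, which are proper by induction'' does not by itself yield properness of $\pi$; you would have to show $\pi$ is proper as a morphism (say via the valuative criterion, carefully treating the boundary strata where the fiber structure jumps), and your induction would also need a clean base case that does not presuppose the result. The paper sidesteps this entirely: it shows (Lemmas~\ref{lem:Mr_sep_proper} and~\ref{lem:2Mn_sep_proper}) that the analytic topology on each chart agrees with the Gromov topology already established as compact and Hausdorff in \cite{b:realization}, and then invokes the Hausdorff/compact characterizations of separatedness and properness for complex schemes. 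If you want to pursue the $\pi$-route instead, you would need to prove properness of $\pi$ directly, not just of its fibers.
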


\noindent
For instance, in Example~\ref{ex:2Mn_model}, we show that one of the local models in our atlas is the following quadric cone, which is indeed a toric affine complete intersection:
\begin{align}
\left\{(a,b,c,d,e,f) \in \bC^6
\:\left|\:
{
{c=d,e=f}
\atop
{ac=ad=be=bf}
}
\right.\right\},
\end{align}

We now describe the plan for our paper.
\begin{enumerate}
\item[\bf\S\ref{s:Mr-bar}:]
We begin our paper by constructing a smooth proper complex variety $\ol M_r^\bC$, which is isomorphic to $\ol M_{0,r+1}$ (we do not construct this isomorphism here).
While $\ol M_{0,r+1}$ is obviously not a new space, we give a construction in terms of an atlas that, to our knowledge, has not appeared in the literature.
This construction serves as a warm-up for the atlas we construct in \S\ref{s:Wn-bar}, which is key to our proof of Thm.~\ref{thm:2Mn-bar}.

\medskip

\item[\bf\S\ref{s:Wn-bar}:]
In this, the central section of our paper, we define $\ol{2M}_\bn^\bC$ and prove Thm.~\ref{thm:2Mn-bar}.
We provide numerous illustrations of the constructions that we introduce.

\medskip

\item[\bf\S\ref{sec:wond-comp-}:]
We describe some future directions, which we will pursue in our follow-up paper \cite{bo:fm}.
Specifically, we plan to cast $\ol{2M}_\bn$ as part of a general construction of ``Fulton--MacPherson compactification for pairs $X \to Y$.''

\medskip

\item[\bf\S\ref{s:examples}:]
We work out all 1- and 2-dimensional instances of $\ol{2M}_\bn^\bC$.

\medskip

\item[\bf\S\ref{s:vpp}:]
There is a stratification on $\ol{2M}_\bn^\bC$ according to the combinatorial type of the corresponding trees of surfaces.
We use this stratification to recursively compute the virtual Poincar\'{e} polynomial of $\ol{2M}_\bn^\bC$, which we have implemented in \textsc{Python} at \cite{bo:program}.
\end{enumerate}


%

\subsection{Acknowledgments}

The first author was supported by an NSF Mathematical Sciences Postdoctoral Research Fellowship and by an NSF Standard Grant (DMS-1906220).
The second  was partially supported by an NSF CAREER Grant
(DMS-1352398), an NSF FRG Grant (DMS-1760373) and the Simons Foundation.
Both authors thank the Mathematical Sciences Research Institute, where this project began, and the first author thanks the Institute of Advanced Study for its hospitality.
Conversations with Satyan Devadoss and Helge Ruddat motivated the first author to begin thinking about a complex analogue of the 2-associahedra, and Paul Seidel also provided encouragement.
Ruddat, drawing on ideas from \cite[\S4]{helge}, made the first checks in GAP that the local models for $\ol{2M}_\bn^\bC$ are reduced and normal.
Conversations with Dominic Joyce helped the first author understand the notion of smooth manifolds with generalized corners.
Felix Janda helped with the proof of Lemma~\ref{lem:ineqs_real_to_int}. The second author would like to thank Paul Hacking and Jenia Tevelev for useful discussions.

\section{Construction of $\ol M_r^\bC$ via explicit charts}
\label{s:Mr-bar}

In this section, we will provide a construction of the compactified moduli space $\ol M_{0,r+1}$ of $(r+1)$-pointed genus-0 curves.
For consistency with \S\ref{s:Wn-bar}, we use the alternate notation $\ol M_r^\bC$.
While $\ol M_{0,r+1}$ is obviously a well-known and -studied space, to our knowledge, our construction has not appeared in the literature.
This section will also serve as a warm-up for our construction of $\ol{2M}_\bn^\bC$ in \S\ref{s:Wn-bar}.
The current section draws on the common approach in the symplectic geometry literature of constructing moduli spaces via an atlas, in which each chart is defined by considering ``gluing parameters'' which govern whether and how to smooth the nodes.

The following is the main result of this section:

\begin{theorem}
\label{thm:Mr_properties}
Fix $r\geq2$.
Then $\ol M_r^\bC$, equipped with the atlas we define in \S\ref{ss:Mr_charts}, is a smooth proper complex variety.
\end{theorem}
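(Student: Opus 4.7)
The plan is to verify, in order, the four attributes claimed: that each chart in the atlas of \S\ref{ss:Mr_charts} is a smooth affine variety of the expected dimension $r-2$; that the transition maps between overlapping charts are algebraic isomorphisms; that the resulting ringed space is a separated scheme; and that it is proper over $\Spec \bC$. First, I would examine a single chart. Expectedly, charts are indexed by stable trees $T$ with $r+1$ leaves (the combinatorial type of a nodal genus-0 curve), with coordinates given by gluing parameters at internal edges together with normalized positions of marked points on each vertex's $\bP^1$, taken modulo the 3-point-stabilizing $\on{PSL}_2$-action. The total coordinate count $(r-2)$ matches the dimension of $M_{0,r+1}$, and each chart is manifestly isomorphic to affine space, hence smooth.

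The main computational work lies in the second step, the transition maps. Given two trees $T_1, T_2$ whose corresponding charts overlap, one must express the coordinates of the $T_2$-chart as rational functions of the $T_1$-coordinates on the overlap. This is most transparent when $T_2$ is obtained from $T_1$ by contracting a set of edges; the general case reduces to this by factoring through a common refinement. The gluing parameters transform by explicit monomial-in-product formulas encoding how a smoothed neck in one chart becomes a rescaled marked-point position in another, and one must check invertibility on the common domain by verifying that each chart's gluing parameters vanish exactly along its boundary strata. This is the step on which the entire proof hinges.

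Third, separatedness is the condition that any two points with the same image in affine coordinates under two charts actually coincide; this can be handled by the valuative criterion applied to one-parameter families, or by observing that on each chart the underlying family of nodal curves is explicit and that distinct stable nodal configurations cannot simultaneously be described by the same limiting coordinates in two charts. Fourth, properness can be proved via the sequential / valuative criterion: any family of smooth $(r+1)$-pointed curves over a punctured disk acquires a unique stable limit, and a standard rescaling argument extracts the dual tree of that limit together with gluing parameters (read off as rates of degeneration) which identify the limit point with a specific point in some chart. Alternatively, once the atlas is in place one can exhibit a bijective morphism to the classical Grothendieck--Knudsen space and invoke its known properness; the statement of the theorem allows us to defer the identification of the two spaces.

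The main obstacle will be the bookkeeping for the transition maps: once one fixes an explicit convention for the gluing parameters and the $\on{PSL}_2$-normalization in each chart, verifying algebraicity and invertibility requires careful case analysis of how nodes in $T_1$ and $T_2$ are related under contraction. Smoothness of each chart is a definitional matter, and separatedness and properness follow from soft arguments about nodal limits; it is the algebraic compatibility of the atlas — ultimately the content that makes this \emph{scheme} rather than merely a topological or analytic construction — that requires the most attention, and it is precisely this work that will later be generalized, with considerably more complexity, to the construction of $\ol{2M}_\bn^\bC$ in \S\ref{s:Wn-bar}.
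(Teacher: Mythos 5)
Your outline agrees with the paper on the division of labor (smooth charts, algebraic transitions, separatedness, properness), and your assessment that the algebraicity and invertibility of the transition maps is the crux is consistent with how the paper is actually structured. But the route you propose for separatedness and properness is genuinely different from the one the paper takes. The paper never invokes the valuative criterion, nor does it compare to Grothendieck--Knudsen; instead it introduces a second, \emph{a priori} distinct topology on $\ol M_r^\bC$ --- the Gromov topology, in which convergent sequences are the Gromov-convergent ones, and which is already known from the first author's earlier work on $\ol{2M}_\bn^\bR$ to be compact, metrizable, and Hausdorff --- and then shows that it coincides with the analytic topology coming from the atlas. This is done by showing analytic convergence implies Gromov convergence via an explicit rescaling formula (Step~1 of Lemma~\ref{lem:Mr_sep_proper}), using first-countability to upgrade this to continuity of the identity map, and then invoking a closed-mapping-theorem argument on small closed balls inside each $X_T$ to get a homeomorphism. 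Separatedness and properness are then read off Osserman's criteria (Hausdorff $\Leftrightarrow$ separated, compact $\Leftrightarrow$ proper) applied to the analytic topology. Your valuative-criterion or comparison-to-$\ol M_{0,r+1}$ alternatives would also work, but the paper's approach has the advantage of being uniform with the construction of $\ol{2M}_\bn^\bC$, where the analogous Gromov-topology facts are also already available and a Grothendieck--Knudsen comparison is not.

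One small imprecision in your write-up: the chart domains $X_T$ are Zariski-open subsets of $\bC^{r-2}$ (certain diagonals and gluing-parameter loci are removed), not ``manifestly isomorphic to affine space.'' This does not affect the smoothness conclusion but does matter for the transition-map and density arguments.
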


\begin{proof}
We show in Lemma~\ref{lem:Mr_variety} that the atlas we define in \S\ref{ss:Mr_charts} endows $\ol M_r^\bC$ with the structure of a prevariety.
Next, we show in Lemma~\ref{lem:Mr_sep_proper} that $\ol M_r^\bC$ is separated (hence a variety) and proper.
Finally, the domains $X_T$ of the charts are Zariski-open subsets of affine space, so $\ol M_r^\bC$ is smooth.
\end{proof}

\subsection{Definition of $K_r^\bC$}
\label{ss:Kr}

Before we can define $\ol M_r^\bC$, we must define the poset $K_r^\bC$, which will index the strata of $\ol M_r^\bC$.
$K_r^\bC$ consists of all possible combinatorial types of the elements of $\ol M_r^\bC$, and we present two equivalent definitions, one in terms of stable rooted trees with labeled leaves, and one in terms of bracketings of $r$ letters.

\begin{definition}
\label{def:Krtree_set}
We say that a vertex $\alpha$ of a rooted tree $T$ is \emph{interior} if the set $\incom(\alpha)$ of its incoming neighbors is nonempty, and we denote the set of interior vertices of $T$ by $T_\inte$ or $V_\inte(T)$.
A rooted tree $T$ is \emph{stable} if, when it is oriented toward its root, every interior vertex has at least 2 incoming edges.
We define $K_r^{\bC,\tree}$ to be the set of all isomorphism classes of stable rooted ribbon trees with $r$ labeled leaves, and we equip $K_r^{\bC,\tree}$ with the structure of a poset by declaring $T' < T$ if $T$ can be obtained by contracting a collection of interior edges of $T'$.
We denote the unique maximal element of $K_r^{\bC,\tree}$ by $T_r^\top$.

We denote the $i$-th leaf of $T \in K^\bC_r$ by $\lambda_i^T$.
For any $\rho, \sigma \in T$, $T_{\rho\sigma}$\label{p:Trhosigma} denotes those vertices $\tau$ such that the path $[\rho,\sigma]$\label{p:path} from $\rho$ to $\sigma$ passes through $\tau$.
We denote $T_\rho \coloneqq T_{\rho_\root\rho}$.
\null\hfill$\triangle$
\end{definition}

\begin{definition}
\label{def:Krbr}
A \emph{1-bracket of $r$} is a nonempty subset $B \subset \{1,\ldots,r\}$. \label{p:B}
A \emph{1-bracketing of $r$} is a collection $\sB$\label{p:sB} of 1-brackets of $r$ satisfying these properties:
\begin{itemize}
\item[] {\sc (Bracketing)} If $B, B' \in \sB$ have $B \cap B' \neq \emptyset$, then either $B \subset B'$ or $B' \subset B$.

\item[] {\sc (Root and leaves)} $\sB$ contains $\{1,\ldots,r\}$ and $\{i\}$ for every $i$.
\end{itemize}
We denote the set of all 1-bracketings of $r$ by $K_r^{\bC,\br}$,\label{p:Krbr} and define a partial order by defining $\sB' < \sB$ if $\sB$ is a proper subcollection of $\sB'$.
\null\hfill$\triangle$
\end{definition}

\noindent
An argument exactly analogous to the proof of \cite[Prop.\ 2.13]{b:2ass} shows that $K_r^{\bC,\tree}$ and $K_r^{\bC,\br}$ are isomorphic posets, and we denote $K_r^\bC \coloneqq K_r^{\bC,\tree} = K_r^{\bC,\br}$.

Next, we define, for any stable rooted tree $T \in K_r^\bC$, the poset $p_T$:
\begin{gather}
p_T
\coloneqq
\{0,1\}^{\#V_\inte(T) - 1},
\end{gather}
equipped with the usual poset structure.
(This can be thought of as a ``local model'' for $K_r^\bC$, when $K_r^\bC$ is viewed as the poset of strata of $\ol M_r^\bC$.)
We denote a typical element by $\boldr = (r_\rho)_{\rho \in V_\inte(T)\setminus\{\rho_\root\}}$.
(Note that a component $r_\rho$ being 0 resp.\ 1 indicates that we should not glue resp.\ should glue at that vertex.)

\begin{definition}
\label{def:gT}
Define a map $g_T\colon p_T \to K_r^\bC$ by defining $g_T(\boldr)$ to be the result of contracting each edge of $T$ whose incoming vertex $\alpha$ has $r_\alpha = 1$.
$\null\hfill\triangle$
\end{definition}

\noindent
Note that $V_\inte(g_T(\boldr))$ can be identified with $g_T^{-1}\{0\} \cup \{\rho_\root\}$.
Moreover, for any $\rho \in g_T^{-1}\{0\} \cup \{\rho_\root\}$, the following equality holds:
\begin{align}
\label{eq:in_of_gT}
\incom_{g_T(\boldr)}(\rho)
=
\left\{\sigma \in g_T^{-1}\{0\} \cup \{\rho_\root\}
\:\left|\:
[\rho,\sigma]_T \cap g_T^{-1}\{0\} \subset \{\rho,\sigma\}\right.\right\}.
\end{align}
(Here and elsewhere, we use the notation $[\rho,\sigma]$ to denote the path between $\rho$ and $\sigma$.)

Finally, we show that $g_T$ is a poset injection and characterize its image.

\begin{lemma}
\label{lem:gT_is_inclusion}
For any $T \in K_r^{\bC,\tree}$, $g_T$ is an inclusion of posets, with image $g(P_T) = [T,T_r^\top]$.
\end{lemma}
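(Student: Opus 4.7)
The plan is to unpack the definitions so that $g_T$ becomes visibly a bijection between the Boolean lattice $p_T$ and the set of subsets of interior edges of $T$, and then to match the two order structures.

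First I would record the bookkeeping bijection between the indexing set $V_\inte(T)\setminus\{\rho_\root\}$ of $p_T$ and the set of interior edges of $T$: each non-root interior vertex $\alpha$ has a unique outgoing edge $e_\alpha$ (the one in the direction of the root), and since $T$ is a tree this assignment $\alpha \mapsto e_\alpha$ is a bijection onto the interior edges of $T$. Thus a point $\boldr \in p_T$ is exactly a choice of a subset $E(\boldr) \coloneqq \{e_\alpha : r_\alpha = 1\}$ of interior edges of $T$, and $g_T(\boldr)$ is by definition the tree obtained by contracting the edges in $E(\boldr)$.

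Next I would prove order-preservation and order-reflection simultaneously. If $\boldr \leq \boldr'$ in $p_T$, then $E(\boldr) \subset E(\boldr')$, so $g_T(\boldr')$ is obtained from $g_T(\boldr)$ by contracting the extra interior edges $E(\boldr')\setminus E(\boldr)$; by the definition of the partial order on $K_r^\bC$ this gives $g_T(\boldr) \leq g_T(\boldr')$. Conversely, given $g_T(\boldr) \leq g_T(\boldr')$ in $K_r^\bC$, I would recover $E(\boldr)$ from $g_T(\boldr)$: the contracted edges are exactly the interior edges of $T$ which do not survive in $g_T(\boldr)$, a condition intrinsic to the tree. This recovery map is order-reversing into the subset lattice of interior edges of $T$ (more contracted = bigger tree in $K_r^\bC$), so $g_T(\boldr)\leq g_T(\boldr')$ forces $E(\boldr) \subset E(\boldr')$, hence $\boldr \leq \boldr'$. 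Injectivity of $g_T$ is a corollary of the recoverability of $E(\boldr)$.

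Finally, for the image, I would argue both inclusions directly. For any $\boldr \in p_T$, $g_T(\boldr)$ is obtained from $T$ by contracting interior edges, so $T \leq g_T(\boldr)$; taking $\boldr$ to be $\bone$ contracts every interior edge of $T$, leaving the corolla on $r$ leaves, so $g_T(\bone) = T_r^\top$, and in general $g_T(\boldr) \leq g_T(\bone) = T_r^\top$. Hence $g_T(p_T) \subset [T, T_r^\top]$. For the reverse inclusion, any $T' \in [T,T_r^\top]$ is by definition obtained from $T$ by contracting a (possibly empty) set $E$ of interior edges; the characteristic function of $E$ under the bijection $\alpha \leftrightarrow e_\alpha$ gives a $\boldr \in p_T$ with $g_T(\boldr) = T'$.

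I do not expect any genuine obstacle: the entire statement is combinatorial bookkeeping, and the only subtlety is making sure the bijection between non-root interior vertices and interior edges is set up correctly, and that contracting an interior edge of $T$ really does correspond to moving up in the partial order on $K_r^\bC$ as defined in Definition~\ref{def:Krtree_set}.
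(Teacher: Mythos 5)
Your proof is correct and simply unpacks what the paper dispatches with ``Immediate from the definition of the partial order.'' Two small remarks. First, the recovery map you describe (sending $g_T(\boldr)$ back to its contraction set $E(\boldr)$) is order-\emph{preserving}, not order-reversing: more contraction means both a larger set $E(\boldr)$ and a larger element of $K_r^\bC$, which is exactly what you then use to conclude $E(\boldr)\subset E(\boldr')$. Second, the recoverability of $E(\boldr)$ from the isomorphism class of $g_T(\boldr)$ --- i.e.\ that two distinct subsets of interior edges of $T$ cannot give isomorphic quotient trees --- is where the labeling of the leaves earns its keep; it is most transparent through the bijection $K_r^{\bC,\tree}\cong K_r^{\bC,\br}$, under which contracting $e_\alpha$ corresponds to deleting the 1-bracket $L(\alpha)$ of leaf-labels below $\alpha$, and $\alpha\mapsto L(\alpha)$ is injective by stability. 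With that one sentence added, the argument is airtight.
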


\begin{proof}
Immediate from the definition of the partial order on $K_r^{\bC,\tree}$.
\end{proof}

\subsection{Construction of $\ol M_r^\bC$ as a set, and of an atlas}
\label{ss:Mr_charts}

In this subsection, we construct $\ol M_r^\bC$ as a set and equip it with an atlas.
We will use this atlas in \S\ref{ss:Mr-bar-charts} to equip $\ol M_r^\bC$ with the structure of an algebraic variety.

\begin{definition}
\label{def:SDT}
A \emph{stable curve with $r \geq 2$ input marked points} is a pair
\begin{align}
\Bigl(T, (x_{\rho\sigma})_{
{\rho \in V_\inte(T),}
\atop
{\sigma \in \incom(\rho)}
}\Bigr),
\end{align}
where:
\begin{itemize}
\item $T$ is a stable rooted tree with $r$ leaves.

\item For $\rho \in V_\inte(T)$, $\bx_\rho \coloneqq (x_{\rho\sigma})_\sigma$ is an element of $\bC^{\#\incom(\rho)} \setminus \Delta$, where $\Delta$ is the big diagonal.
\end{itemize}
We say that two stable curves $\bigl(T,(x_{\rho\sigma})\bigr)$, $\bigl(T',(x'_{\rho\sigma})\bigr)$ are \emph{isomorphic} if there is an isomorphism of rooted trees $f\colon T \to T'$ and a function $V_\inte(T) \to G_1\colon \rho \mapsto \phi_\rho$ (where $G_1$ is the reparametrization group $\bC \rtimes (\bC\setminus\{0\})$ acting on $\bC$ by translations and dilations)
such that:
\begin{gather}
x'_{f(\rho)f(\sigma)} = \phi_\rho(x_{\rho\sigma}) \:\:\forall\:\: \rho \in V_\inte(T), \: \sigma \in \incom(\rho).
\end{gather}
We extend the notation $x_{\rho\sigma}$ to allow any distinct $\rho \in V_\inte(T)$ and $\sigma \in V(T)$, like so:
\begin{itemize}
\item
Suppose $\sigma$ lies in $T_\rho$.
Set $\tau$ to be the first vertex after $\rho$ through which the path from $\rho$ to $\sigma$ passes.
Define $x_{\rho\sigma} \coloneqq x_{\rho\tau}$.

\medskip

\item
If $\sigma$ does not lie in $T_\rho$, then set $x_{\rho\sigma} \coloneqq \infty$.
\end{itemize}

We denote by $\SC_r$ the collection of stable curves with $r$ input marked points, and we define the \emph{moduli space of stable curves with $r$ input marked points $\ol M_r^\bC$} to be the set of isomorphism classes of stable curves of this type.
For any stable rooted tree $T$ with $r$ leaves, define the corresponding \emph{strata} $\SC_{r,T} \subset \SC_r$, $\ol\cM_{r,T} \subset \ol M_r$ to be the set of all stable curves (resp.\ isomorphism classes thereof) of the form $\bigl(T,(x_{\rho\sigma})\bigr)$.
We say that a stable curve is \emph{smooth} if its underlying rooted tree $T$ has only one interior vertex; we denote a smooth stable curve by the tuple $\bx \in \bC^r$ associated to the root.
\null\hfill$\triangle$
\end{definition}

We now define the notion of a slice of a stable rooted tree.
This can be thought of as a local slice for the action of $G_1$ on the corresponding stratum in $\ol M_r^\bC$, which is necessary data for the chart that we will associate to $T$.

\begin{definition}
\label{def:slice_and_CT}
If $T \in K_r^\bC$ is a stable rooted tree, then a \emph{slice} of $T$ is a pair $\bs = (s_0,s_1)$ of functions $s_i\colon V_\inte(T) \to V(T)$ such that, for every $\rho \in V_\inte(T)$, $s_0(\rho)$ and $s_1(\rho)$ are distinct elements of $\incom(\rho)$.
For a sliced tree $T = (T, \bs)$, we define a map
\begin{align}
C_T
\colon
\prod_{\rho \in V_\inte(T)}
\bigl((\bC\setminus\{0,1\})^{\#\incom(\rho) - 2}\setminus\Delta\bigr)
\to
\ol M_r,
\qquad
(x_{\rho,\sigma})_{{\rho \in V_\inte(T),}
\atop
{\sigma \in \incom(\rho) \setminus \bs(\rho)}}
\mapsto
\bigl(T,(\wt\bx_\rho)\bigr),
\end{align}
where $\wt\bx_\rho$ is defined to be the extension of $\bx_\rho$ by appending $\wt x_{\rho,s_i(\rho)} \coloneqq i$ for $i = 0,1$.
Then $C_T$ restricts to a bijection from its domain to $\ol M_{r,T}$.
\null\hfill$\triangle$
\end{definition}

\noindent
Note that by equation (7) in \cite{b:2ass}, the domain of $C_T$ is contained (as a Zariski-open subset) in $\bC^{d(T)}$.

The polynomials we now define will form the component functions in our charts.

\begin{definition}
\label{def:Mr_local_model}
For $C = \bigl(T, (\bx_\rho)\bigr) \in \SC_r$ a stable curve, we define the following collection of polynomials in variables $b_\upsilon$, $\upsilon \in V_\inte(T) \setminus \{\rho_\root\}$:
\begin{align}
p^C_{\rho\sigma}(\bb)
\coloneqq
\sum_{\tau \in [\rho,\sigma[} \Bigl(x_{\tau\sigma}\prod_{\upsilon \in ]\rho,\tau]} b_\upsilon\Bigr),
\quad
\rho \in V_\inte(T), \sigma \in T_\rho \setminus \{\rho\}.
\end{align}
This allows us to define, for any $T \in K_r^\bC$, the associated local model $X_T$:
\begin{gather}
\label{eq:Kr_local_model_def}
X_T
\coloneqq
\Bigl(
\prod_{\rho \in V_\inte(T)}
\bigl((\bC\setminus\{0,1\})^{\#\incom(\rho)-2}\setminus\Delta\bigr)
\times
\bC^{\#V_\inte(T)-1}
\Bigr)
\setminus
\bigcup_{i < j}
Z(q_{ij})
\subset
\bC^{r-2},
\end{gather}
where the fact that the middle space lies in $\bC^{r-2}$ follows from equation (8) in \cite{b:2ass}.
Denoting an element of $X_T$ by $\bigl((\bx_\rho),\bb\bigr)$, we have set $q_{ij}$ to be the largest polynomial factor of $p_{\rho_\root\lambda_i}^{C_T(\bx_\rho)} - p_{\rho_\root\lambda_j}^{C_T(\bx_\rho)}$ not divisible by a monomial in $\bb$, i.e.\
\begin{align}
p_{\rho_\root\lambda_i}^{C_T(\bx_\rho)}(\bb) - p_{\rho_\root\lambda_j}^{C_T(\bx_\rho)}(\bb)
=
q_{ij}\bigl((\bx_\rho),\bb\bigr)\cdot\!\!\!\prod_{{\rho \in V_\inte(T),}
\atop
{\rho\neq\rho_\root}}
b_\rho^{a_\rho},
\qquad
b_\rho \nmid q_{ij} \:\forall\: \rho \in V_\inte(T)\setminus\{\rho_\root\}.
\end{align}
We define a map $\pi_T\colon \bC^{r-2} \to p_T$ like so:
\begin{align}
\pi_T\bigl((\bx_\rho),\bb\bigr)_\sigma
\coloneqq
\begin{cases}
0, & b_\rho = 0,
\\
1, & \text{ otherwise}.
\end{cases}
\end{align}
Then the local model $X_T$ is stratified as $X_T = \bigcup_{T'\geq T} X_{T,T'}$, where we define $X_{T,T'} \coloneqq \pi_T^{-1}(T')$.
$\null\hfill\triangle$
\end{definition}

In the following lemma, we provide an alternate, stratum-by-stratum formulation of the local model $X_T$.
Specifically, we show that the stratum $X_{T,T'}$ is exactly those choices of $\bigl((\bx_\rho),\bb\bigr)$ with $g_T\bigl(\pi_T\bigl((\bx_\rho),\bb\bigr)\bigr) = T'$ and such that for every $\sigma \in V_\inte(T')$, the special points in the screen corresponding to $\sigma$ are distinct.
\begin{lemma}
\label{lem:X_T_strats}
For every sliced tree $T \in K_r^\bC$ and $T' \geq T$, the following equality holds:
\begin{gather}
\label{eq:X_T_stratum_reform}
X_{T,T'}
=
\biggl\{
\bigl((\bx_\rho),\bb\bigr)
\in
\prod_{\rho \in V_\inte(T)} \bigl((\bC\setminus\{0,1\})^{\#\incom(\rho)-2}\setminus\Delta\bigr)
\times
\bC^{\#V_\inte(T)-1}
\:\bigg|\:
g_T\bigl(\pi_T\bigl((\bx_\rho),\bb\bigr)\bigr) = T',
\:
\eqref{eq:X_T_alternate_form}
\biggr\},
\\
\label{eq:X_T_alternate_form}
p_{\rho\sigma_1}^{C_T(\bx_\rho)}(\bb)
\neq
p_{\rho\sigma_2}^{C_T(\bx_\rho)}(\bb)
\text{ for all }
\rho \in V_\inte(T')
\text{ and distinct }
\sigma_1, \sigma_2 \in \incom_{T'}(\rho).
\end{gather}
\end{lemma}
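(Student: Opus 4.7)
I plan to verify the set-theoretic equality by reducing to the stratum where $g_T\bigl(\pi_T\bigl((\bx_\rho),\bb\bigr)\bigr) = T'$. Since the left-hand side (which is contained in $X_T$) and the right-hand side both explicitly enforce this constraint, what remains is to show that, on this stratum, $\bigl((\bx_\rho),\bb\bigr)$ lies in $X_T$---i.e.\ $q_{ij}\bigl((\bx_\rho),\bb\bigr) \neq 0$ for all $i<j$---if and only if condition \eqref{eq:X_T_alternate_form} holds.

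The engine of the argument is a splitting identity for $p^C_{\rho\lambda}$: for $\rho \in V_\inte(T)$, a leaf $\lambda$ descended from $\rho$, and an interior vertex $\sigma$ on the path $[\rho,\lambda[$,
\[
p^C_{\rho\lambda}(\bb)
=
p^C_{\rho\sigma}(\bb)
+
\Bigl(\prod_{\upsilon \in ]\rho,\sigma]} b_\upsilon\Bigr)\,
p^C_{\sigma\lambda}(\bb),
\]
obtained by splitting the defining sum at $\sigma$ and invoking the convention $x_{\tau\lambda} = x_{\tau\sigma}$ for $\tau \in [\rho,\sigma[$. Applied with $\sigma = \rho_{ij}^T$, the vertex of $T$ where the paths from the root to $\lambda_i$ and $\lambda_j$ first diverge, and then subtracted, this yields
\[
p^C_{\rho_\root\lambda_i}(\bb) - p^C_{\rho_\root\lambda_j}(\bb)
=
\Bigl(\prod_{\upsilon \in ]\rho_\root,\rho_{ij}^T]} b_\upsilon\Bigr)
\bigl(p^C_{\rho_{ij}^T\lambda_i}(\bb) - p^C_{\rho_{ij}^T\lambda_j}(\bb)\bigr).
\]
The second factor has constant term $x_{\rho_{ij}^T\lambda_i} - x_{\rho_{ij}^T\lambda_j}$, which is nonzero by non-diagonality of $\bx_{\rho_{ij}^T}$; hence no $b_\upsilon$ divides it, and comparing with the definition of $q_{ij}$ identifies $q_{ij} = p^C_{\rho_{ij}^T\lambda_i} - p^C_{\rho_{ij}^T\lambda_j}$.

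To conclude, recall that $V_\inte(T')$ consists of $\rho_\root$ together with $\{\rho \in V_\inte(T)\setminus\{\rho_\root\} : b_\rho = 0\}$, and each $\sigma \in \incom_{T'}(\rho)$ is either a leaf of $T$ or a non-root interior vertex of $T$ with $b_\sigma = 0$. For any such $\sigma$ and any leaf $\lambda$ descended from $\sigma$, the splitting identity collapses to $p^C_{\rho\lambda}(\bb) = p^C_{\rho\sigma}(\bb)$---trivially if $\sigma = \lambda$, and otherwise because the correction's monomial $\prod_{\upsilon\in]\rho,\sigma]} b_\upsilon$ contains the vanishing factor $b_\sigma$. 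Therefore, for any pair $\sigma_1, \sigma_2 \in \incom_{T'}(\rho)$ and leaves $\lambda_i \leq \sigma_1$, $\lambda_j \leq \sigma_2$,
\[
p^C_{\rho\sigma_1}(\bb) - p^C_{\rho\sigma_2}(\bb)
=
\Bigl(\prod_{\upsilon \in ]\rho,\rho_{ij}^T]} b_\upsilon\Bigr)\,
q_{ij}\bigl((\bx_\rho),\bb\bigr),
\]
with the monomial factor nonzero since every vertex in $]\rho,\rho_{ij}^T]$ lies in the chain of contractions identifying $\rho_{ij}^T$ with $\rho$ inside $T'$. Since every pair $(\rho,\{\sigma_1,\sigma_2\})$ occurring in \eqref{eq:X_T_alternate_form} arises in this way from some pair of leaves, the desired equivalence follows. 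The main obstacle is the careful bookkeeping of the $T$-to-$T'$ correspondence under contraction, together with the clean observation that on the stratum $p^C_{\rho\lambda}(\bb)$ collapses to $p^C_{\rho\sigma}(\bb)$ for $\sigma \in \incom_{T'}(\rho)$, forcing the two ostensibly different families of inequalities to coincide.
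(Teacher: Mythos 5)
Your proposal is correct, and takes the same overall approach as the paper's own proof: extract the maximal monomial factor from $p_{\rho_\root\lambda_i}^{C_T(\bx_\rho)} - p_{\rho_\root\lambda_j}^{C_T(\bx_\rho)}$ to identify $q_{ij}$, then use the stratum condition ($b_\sigma = 0$ for $\sigma \in V_\inte(T')\setminus\{\rho_\root\}$) to compare $q_{ij}$ with $p_{\rho\sigma_1}^{C_T(\bx_\rho)} - p_{\rho\sigma_2}^{C_T(\bx_\rho)}$. There is one place where your bookkeeping is actually tighter than the paper's. The paper factors the difference at $\rho$ and asserts that the resulting constant term $\wt x_{\rho\lambda_1} - \wt x_{\rho\lambda_2} = \wt x_{\rho\sigma_1} - \wt x_{\rho\sigma_2}$ is nonzero; this holds only when $\sigma_1$ and $\sigma_2$ descend into distinct immediate children of $\rho$ in $T$, i.e.\ when your branch vertex $\rho_{ij}^T$ equals $\rho$. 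When $\rho_{ij}^T$ lies strictly below $\rho$ (e.g.\ if $T'$ is obtained from $T$ by contracting a child $\tau$ of $\rho$ that dominates both $\sigma_1$ and $\sigma_2$), the parenthesized factor the paper calls $q_{ij}$ still carries the monomial $\prod_{\upsilon\in]\rho,\rho_{ij}^T]}b_\upsilon$, so its displayed identity $q_{ij}|_{X_{T,T'}} = (p_{\rho\sigma_1} - p_{\rho\sigma_2})|_{X_{T,T'}}$ is off by exactly this factor. Since that factor is nonvanishing on $X_{T,T'}$, the \emph{conclusion} of the lemma is unaffected, but your proof is airtight in all cases precisely because you factor at $\rho_{ij}^T$---yielding the correct identification $q_{ij} = p^{C_T(\bx_\rho)}_{\rho_{ij}^T\lambda_i} - p^{C_T(\bx_\rho)}_{\rho_{ij}^T\lambda_j}$---and then explicitly track the residual monomial $\prod_{\upsilon\in]\rho,\rho_{ij}^T]}b_\upsilon$ as a nonvanishing unit on the stratum.
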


\begin{proof}
Fix $\rho \in V_\inte(T')$, $\sigma_1,\sigma_2 \in \incom_{T'}(\rho)$ distinct, and $\lambda_i \in T'_{\sigma_i}$, $i \in \{1,2\}$.
To prove \eqref{eq:X_T_stratum_reform}, we will show that $q_{ij}\bigl((\bx_\rho),\bb\bigr)$ and $p_{\rho\sigma_1}^{C_T(\bx_\rho)} - p_{\rho\sigma_2}^{C_T(\bx_\rho)}$ coincide on $X_{T,T'}$.
We begin by simplifying $p_{\rho_\root\lambda_1}^{C_T(\bx_\rho)} - p_{\rho_\root\lambda_2}^{C_T(\bx_\rho)}$:
\begin{align}
p_{\rho_\root\lambda_1}^{C_T(\bx_\rho)}(\bb) - p_{\rho_\root\lambda_2}^{C_T(\bx_\rho)}(\bb)
&=
\sum_{\tau \in [\rho_\root,\lambda_1[} \Bigl(\wt x_{\tau\lambda_1}\prod_{\upsilon \in ]\rho_\root,\tau]} b_\upsilon\Bigr)
-
\sum_{\tau \in [\rho_\root,\lambda_2[} \Bigl(\wt x_{\tau\lambda_2}\prod_{\upsilon \in ]\rho_\root,\tau]} b_\upsilon\Bigr)
\\
&=
\prod_{\upsilon \in ]\rho_\root,\rho]} \!\!\!b_\upsilon\cdot
\biggl(\sum_{\tau \in [\rho,\lambda_1[} \Bigl(\wt x_{\tau\lambda_1}\prod_{\upsilon \in ]\rho,\tau]} b_\upsilon\Bigr)
-
\sum_{\tau \in [\rho,\lambda_2[} \Bigl(\wt x_{\tau\lambda_2}\prod_{\upsilon \in ]\rho,\tau]} b_\upsilon\Bigr)\biggr)
\nonumber
\end{align}
Using the inequality $\wt x_{\rho\lambda_1} = \wt x_{\rho\sigma_1} \neq \wt x_{\rho\sigma_2} = \wt x_{\rho\lambda_2}$, we therefore have the following formula for $q_{ij}$:
\begin{align}
q_{ij}(\bb)
=
\sum_{\tau \in [\rho,\lambda_1[} \Bigl(\wt x_{\tau\lambda_1}\prod_{\upsilon \in ]\rho,\tau]} b_\upsilon\Bigr)
-
\sum_{\tau \in [\rho,\lambda_2[} \Bigl(\wt x_{\tau\lambda_2}\prod_{\upsilon \in ]\rho,\tau]} b_\upsilon\Bigr).
\end{align}
\eqref{eq:in_of_gT} implies that we may rewrite $q_{ij}$ on $X_{T,T'}$ like so:
\begin{align}
\bigl(q_{ij}|_{X_{T,T'}}\bigr)(\bb)
&=
\sum_{\tau \in [\rho,\sigma_1[} \Bigl(\wt x_{\tau\sigma_1}\prod_{\upsilon \in ]\rho,\tau]} b_\upsilon\Bigr)
-
\sum_{\tau \in [\rho,\sigma_2[} \Bigl(\wt x_{\tau\sigma_2}\prod_{\upsilon \in ]\rho,\tau]} b_\upsilon\Bigr)
\\
&=
\bigl((p_{\rho\sigma_1} - p_{\rho\sigma_2})|_{X_{T,T'}}\bigr)(\bb).
\nonumber
\end{align}
This establishes \eqref{eq:X_T_stratum_reform}.
\end{proof}

The following corollary shows that $X_T$ always contains a certain set.
We will use this in our proof of Lemma~\ref{lem:2Mn_sep_proper}, to show that the analytic and Gromov topologies on $\ol M_r^\bC$ coincide.

\begin{corollary}
\label{cor:X_T_contains_0}
For every $T \in K_r^\bC$, $X_T$ is a Zariski-open subset of $\bC^{r-2}$ which contains the set
\begin{align}
\label{eq:set_that_X_T_contains}
\prod_{\rho \in V_\inte(T)} \bigl((\bC\setminus\{0,1\})^{\#\incom(\rho)-2}\setminus\Delta\bigr) \times \{\bzero\}.
\end{align}
\end{corollary}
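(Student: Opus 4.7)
The proof has two claims to verify: that $X_T$ is Zariski-open in $\bC^{r-2}$, and that $X_T$ contains the stated set. The first claim is essentially immediate from the definition: the factor $\prod_{\rho \in V_\inte(T)} \bigl((\bC\setminus\{0,1\})^{\#\incom(\rho)-2}\setminus\Delta\bigr) \times \bC^{\#V_\inte(T)-1}$ is visibly a Zariski-open subset of $\bC^{r-2}$ (each $\setminus\Delta$ and $\setminus\{0,1\}$ removes a closed subvariety, and the complement of each is Zariski-open), and removing the further finite union of hypersurfaces $\bigcup_{i<j} Z(q_{ij})$ keeps it Zariski-open.

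For the containment claim, fix an arbitrary point $\bigl((\bx_\rho),\bzero\bigr)$ of the set \eqref{eq:set_that_X_T_contains}; I need to show $q_{ij}\bigl((\bx_\rho),\bzero\bigr) \neq 0$ for every pair $i<j$. For such a pair, let $\rho \in V_\inte(T)$ be the meet of the leaves $\lambda_i,\lambda_j$ in $T$, and let $\sigma_k \in \incom_T(\rho)$ be the child of $\rho$ on the path to $\lambda_k$. Since $\pi_T\bigl((\bx_\rho),\bzero\bigr) = \bzero \in p_T$ and $g_T(\bzero) = T$, the point $\bigl((\bx_\rho),\bzero\bigr)$ lies in the stratum $X_{T,T}$. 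Applying Lemma~\ref{lem:X_T_strats} with $T' = T$ then gives
\begin{align}
q_{ij}\bigl((\bx_\rho),\bzero\bigr) = p_{\rho\sigma_1}^{C_T(\bx_\rho)}(\bzero) - p_{\rho\sigma_2}^{C_T(\bx_\rho)}(\bzero).
\end{align}

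The last step is to evaluate each $p_{\rho\sigma_k}^{C_T(\bx_\rho)}$ at $\bb = \bzero$: in the defining sum $\sum_{\tau \in [\rho,\sigma_k[} \wt x_{\tau\sigma_k}\prod_{\upsilon \in ]\rho,\tau]} b_\upsilon$, every term with $\tau \neq \rho$ contains at least one $b_\upsilon$ factor and so vanishes at $\bzero$, while the $\tau = \rho$ term has an empty product and contributes $\wt x_{\rho\sigma_k}$. Therefore $q_{ij}\bigl((\bx_\rho),\bzero\bigr) = \wt x_{\rho\sigma_1} - \wt x_{\rho\sigma_2}$. This is nonzero because $\sigma_1$ and $\sigma_2$ are distinct elements of $\incom(\rho)$ and, by construction of $\wt\bx_\rho$ from $\bx_\rho \in (\bC\setminus\{0,1\})^{\#\incom(\rho)-2}\setminus\Delta$ together with the distinct slice values $0,1$, the extended tuple $\wt\bx_\rho$ has pairwise distinct components.

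There is no real obstacle here; the only nontrivial content is recognizing that the constant term of each $p_{\rho\sigma_k}^{C_T(\bx_\rho)}$ in $\bb$ is exactly $\wt x_{\rho\sigma_k}$, so that the stratum formula from Lemma~\ref{lem:X_T_strats} specializes at $\bb = \bzero$ to the nonvanishing difference encoded by $\wt\bx_\rho \notin \Delta$.
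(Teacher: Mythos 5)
Your proof is correct and follows essentially the same route as the paper's: invoke Lemma~\ref{lem:X_T_strats} (with $T'=T$) to reduce the containment to the nonvanishing of the constant term of $q_{ij}$, and then observe that this constant term is $\wt x_{\rho\sigma_1} - \wt x_{\rho\sigma_2}$, which is nonzero because the extended tuple $\wt\bx_\rho$ has pairwise distinct entries. The only thing I would tighten is the sentence asserting that $((\bx_\rho),\bzero)$ ``lies in the stratum $X_{T,T}$'' before the nonvanishing has been verified --- since $X_{T,T}$ is a stratum of $X_T$, this assertion is logically the conclusion rather than an input; the cleaner phrasing (and the one the paper uses) is that Lemma~\ref{lem:X_T_strats} shows membership in $X_T$ is \emph{equivalent} to the stated nonvanishing at $\bb=\bzero$, which you then verify.
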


\begin{proof}
By Def.~\ref{def:Mr_local_model}, $X_T$ is open in $\bC^{r-2}$.
To show that $X_T$ contains \eqref{eq:set_that_X_T_contains}, it follows from Lemma~\ref{lem:X_T_strats} that it is equivalent to show that for every $\rho \in V_\inte(T)$ and $\sigma_1,\sigma_2$ distinct incoming vertices of $\rho$, $p_{\rho\sigma_1}^{C_T(\bx_\rho)}(\bb) - p_{\rho\sigma_2}^{C_T(\bx_\rho)}(\bb)$ has a nonzero constant term when thought of as a function of $\bb$.
This constant term is $\wt x_{\rho\sigma_1} - \wt x_{\rho\sigma_2}$, which is indeed nonzero.
\end{proof}

\begin{definition}
\label{def:Mr_charts}
Fix tree $T \in K_r^\bC$ equipped with a slice $\bs$.
We define $\varphi_T\colon X_T \to \ol M_r^\bC$ like so:
\begin{gather}
\varphi_T\bigl((\bx_\rho),\bb\bigr)
\coloneqq
\bigl(
g_T\bigl(\pi_T\bigl((\bx_\rho),\bb\bigr)\bigr),
\bigl(\bx^{\varphi_T(\bb)}_\rho\bigr)\bigr),
\qquad
\bx^{\varphi_T(\bb)}_\rho
\coloneqq
\Bigl(p_{\rho\sigma}^{C_T(\bx_\rho)}(\bb)\Bigr).
\nonumber
\end{gather}
See Fig.~\ref{fig:Mr_chart} for an illustration of this definition.

Given a sliced tree $(T,\bs)$ and $\boldr \in \{0,1\}^{\#V_\inte(T)-1}$, we can define the \emph{pushforward slice} $\wt\bs$ on $g_T(\boldr)$.
To do so, fix $\rho \in V_\inte(T)$ and $i \in \{0,1\}$.
In the presentation of $\incom_{g_T(\boldr)}(\rho)$ given in \eqref{eq:in_of_gT}, we define $\wt s_i(\rho)$ to be the unique element $\sigma$ of $\incom_{g_T(\boldr)}(\rho)$ satisfying the property that if we denote the path $[\rho,\sigma]_T$ by $\rho=\tau_1,\tau_2,\ldots,\tau_k=\sigma$, then we have $\tau_2 = s_i(\tau_1)$ and $\tau_{j+1} = s_0(\tau_j)$ for $j \in [2,k-1]$.
$\null\hfill\triangle$
\end{definition}

\begin{figure}[H]
\centering
\def\svgwidth{1.0\columnwidth}
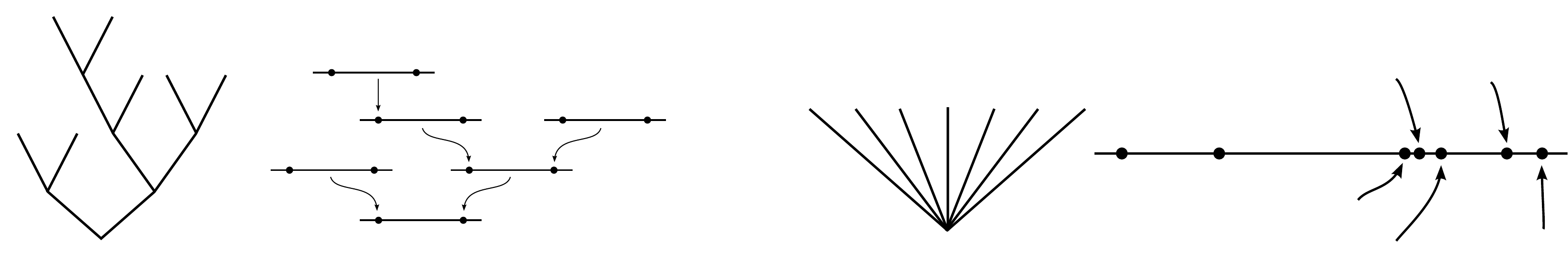
\caption{We illustrate one of the charts on $\ol M_7^\bC$, associated to the stable curve on the left.
The arguments are denoted $r,s,t,u,v$, and in this figure we take them all to be nonzero.
The left resp.\ right marked points in each screen of the stable curve on the left are at 0 resp.\ 1.
\label{fig:Mr_chart}}
\end{figure}

\begin{remark}
We will use all the charts $\varphi_T$, for $T \in K_r^\bC$ equipped with a slice, to construct an atlas on $\ol{2M}_\bn^\bC$.
In fact, one could construct a smaller atlas, which consists of a chart for every dimension-0 element of $K_r^\bC$, where we make a choice of a slice for each such tree.
\null\hfill$\triangle$
\end{remark}

\begin{example}
In the following figure, we depict one of the charts on $\ol M_r^\bC$, associated to the 0-dimensional stable curve on the left.
\begin{figure}[H]
\centering
\def\svgwidth{0.8\columnwidth}
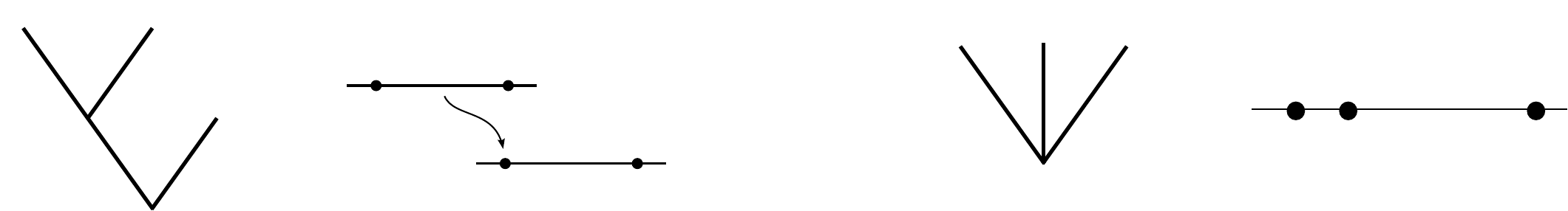
\end{figure}
\noindent
The domain of this chart is $\bC \setminus \{1\}$.
We are therefore defining an atlas on $\ol M_3 \simeq \bCP^1$ consisting of four charts (ignoring choice of slice, which does not affect the chart's image): $\bCP^1 \setminus \{1,\infty\}$, $\bCP^1 \setminus \{0,\infty\}$, $\bCP^1 \setminus \{0,1\}$, and $\bCP^1 \setminus \{0,1,\infty\}$.
The first three are the charts associated to the 0-dimensional elements of $K_r^\bC$, and the last chart is associated to $T_3^\top$.
\null\hfill$\triangle$
\end{example}

\begin{lemma}
\label{lem:image_of_Kr_chart}
For any sliced tree $T$, $\varphi_T$ is injective, with image $\varphi_T(X_T) = \bigcup_{T' \geq T} \ol M_{r,T'}$.
\end{lemma}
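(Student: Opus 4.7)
The plan is to decompose the statement stratum by stratum according to $\pi_T$. For each $\boldr \in p_T$, set $T' \coloneqq g_T(\boldr)$. By Def.~\ref{def:Mr_charts}, $\varphi_T$ sends $X_{T,T'}$ into $\ol M_{r,T'}$: the tree type of the output is $g_T(\pi_T(\cdot)) = T'$, and Lemma~\ref{lem:X_T_strats} guarantees that the marked points produced in each screen of $T'$ are pairwise distinct, so the output is a bona fide stable curve. Since Lemma~\ref{lem:gT_is_inclusion} identifies the image of $g_T$ with $[T, T_r^\top]$, this already gives the inclusion $\varphi_T(X_T) \subset \bigcup_{T' \geq T} \ol M_{r,T'}$.

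Next I would show that $\varphi_T|_{X_{T,T'}}\colon X_{T,T'} \to \ol M_{r,T'}$ is a bijection for each such $T'$. The key computation is that the pushforward slice $\wt\bs$ on $T'$ is automatically respected: expanding
\begin{align*}
p_{\rho'\wt s_i(\rho')}^{C_T(\bx_\rho)}(\bb)
=
\sum_{\tau \in [\rho',\wt s_i(\rho')[} \wt x_{\tau \wt s_i(\rho')}\prod_{\upsilon \in ]\rho',\tau]} b_\upsilon
\end{align*}
along the path $\rho' = \tau_1,\tau_2,\ldots,\tau_k = \wt s_i(\rho')$ in $T$, the defining conditions $\tau_2 = s_i(\rho')$ and $\tau_{j+1} = s_0(\tau_j)$ force every summand after the first to vanish and the first to equal $i$. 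Thus $\varphi_T|_{X_{T,T'}}$ factors through $C_{T'}^{-1}$ (with slice $\wt\bs$), and it suffices to show the resulting polynomial map $\Phi\colon X_{T,T'} \to \mathrm{dom}(C_{T'})$ is a bijection. A dimension count using $\sum_{\rho \in V_\inte(T)}\#\incom_T(\rho) = \#V_\inte(T) + r - 1$ and the analogous identity for $T'$, together with the fact that $X_{T,T'}$ has exactly $\#V_\inte(T) - \#V_\inte(T')$ nonzero $b$-coordinates, shows that source and target of $\Phi$ have the same dimension $r - 1 - \#V_\inte(T')$.

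I would then prove $\Phi$ is bijective by induction on the number of contracted vertices $\#V_\inte(T) - \#V_\inte(T')$. The base case $T' = T$ is trivial since then all $b_\rho$ vanish and $\Phi$ reduces to the identity on $\mathrm{dom}(C_T)$. For the inductive step, pick a leafward-most contracted vertex $\tau$ (so every child of $\tau$ in $T$ lies in $V_\inte(T')$ or is a leaf), and decompose $\Phi$ as a ``peel'' step at $\tau$ followed by the map $\Phi$ for the smaller sliced tree obtained by deleting the subtree above $\tau$. The peel step recovers $b_\tau$ and $\bx_\tau$ from the values $p_{\rho'\sigma'}$ with $\sigma' \in T_\tau \setminus \{\tau\}$: since $p_{\rho'\sigma'}$ is affine in the coefficients $\wt x_{\tau\sigma'}$ with common factor $\prod_{\upsilon \in ]\rho',\tau]} b_\upsilon$, comparing two such expressions corresponding to $\wt s_0(\tau)$ and $\wt s_1(\tau)$ pins down this product, which together with the known values further up the tree isolates $b_\tau$, and the remaining ratios then determine $\bx_\tau$.

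Finally, injectivity of $\varphi_T$ on all of $X_T$ follows because $\pi_T$ distinguishes the strata $X_{T,T'}$ (distinct $T'$ give distinct strata of $\ol M_r^\bC$), so any two preimages of the same point must already lie in a single $X_{T,T'}$ where bijectivity was shown; the image identification is then $\varphi_T(X_T) = \bigsqcup_{T' \geq T} \ol M_{r,T'}$. The main obstacle is the inductive bijectivity of $\Phi$: cleanly separating the ``local'' data at a single contracted vertex $\tau$ from the polynomial tuple $(p_{\rho'\sigma'})$ requires a careful accounting of which paths in $T$ pass through $\tau$ and of the fact that the distinguished nonzero difference $\wt x_{\tau\wt s_0(\tau)} - \wt x_{\tau\wt s_1(\tau)} = -1$ actually appears as a factor in the relevant difference of $p$-polynomials.
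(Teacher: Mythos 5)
Your proof takes a genuinely different route from the paper's. The paper first proves bijectivity onto the top stratum $\ol M_{r,T_r^\top}^\bC$ by induction on $r$, peeling a \emph{leaf} at each step and solving a triangular system, and then extends to lower strata via the product decomposition $X_{T,T'} = \prod_{\rho' \in V_\inte(T')} X_{T(\rho'),T_{s_{\rho'}}^\top}$, with a separate case split between $d(T)=0$ and $d(T)\geq1$. You instead argue stratum-by-stratum from the bottom, inducting on the number of contracted vertices $\#V_\inte(T)-\#V_\inte(T')$ and peeling a leafward-most contracted \emph{interior vertex} at each step; the base case $T'=T$ is trivial and there is no case split on $d(T)$. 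Both arguments hinge on the pushforward slice being respected, i.e.\ $p_{\rho'\wt s_i(\rho')}^{C_T(\bx_\rho)}(\bb)\equiv i$, and your verification of this (and your dimension count) is correct. Your treatment is in this respect more uniform than the paper's.

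The peel step, however, is under-specified in ways you only partially flag. First, $p_{\rho's_0(\tau)}-p_{\rho's_1(\tau)}=-\prod_{\upsilon\in]\rho',\tau]}b_\upsilon$ determines only the \emph{product} of gluing parameters along the entire path from $\rho'$ (the image of $\tau$ in $T'$) up to $\tau$, not $b_\tau$ alone; $b_\tau$ can be extracted only after the recursive call has already returned the intermediate $b_\upsilon$'s, and your phrase ``together with the known values further up the tree'' glosses over this ordering dependency — at the moment of the peel, nothing above $\rho'$ is yet known. Relatedly, the ``smaller sliced tree obtained by deleting the subtree above $\tau$'' needs to be pinned down: $\tau$ should be retained as a new leaf with corresponding datum $p_{\rho'\tau}=p_{\rho's_0(\tau)}$ (using $\wt x_{\tau s_0(\tau)}=0$), and the asserted factorization of $\Phi$ through the smaller $\Phi$ must then be checked rather than asserted. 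Second, you do not verify that the recovered $(\bx_\rho,\bb)$ actually lands back in $X_{T,T'}$ — i.e.\ that the recovered $\bx$'s avoid $\{0,1\}$ and the diagonal, and that the contracted $b$'s are nonzero; the paper closes this explicitly by noting that a preimage outside $X_{T,T'}$ would produce an image outside $\ol M_{r,T'}^\bC$. Neither issue looks fatal, but a complete proof must address both.
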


\begin{proof}
{\bf Step 1:}
{\it For $d(T) = 0$, $\varphi_T$ restricts to a bijection from $X_T \cap (\bC\setminus\{0\})^{r-2}$ to the top stratum $M_r^\bC \coloneqq \ol M_{r,T_r^\top}$ of $\ol M_r$.}

\medskip

\noindent
Denote by $\bs$ the slice associated to $T$, and by $\wt\bs$ the pushforward of $\bs$ to $T_r^\top$.
We will prove the statement of Step 1 by showing that $C_{(T_r^\top,\wt\bs)}^{-1} \circ \varphi_T$ restricts to a bijection from $X_T \cap (\bC\setminus\{0\})^{r-2}$ to $(\bC\setminus\{0,1\})^{r-2} \setminus \Delta$.
Clearly we have $\bigl(C_{(T_r^\top,\wt\bs)}^{-1} \circ \varphi_T\bigr)\bigl(X_T \cap (\bC\setminus\{0\})^{r-2}\bigr) \subset (\bC\setminus\{0,1\})^{r-2} \setminus \Delta$.
In the remainder of this step, we will show that this restriction is invertible.

For any $\bb \in X_T \cap (\bC\setminus\{0\})^{r-2}$, $\varphi_T(\bb)$ lies in the open stratum $\ol M_{r,T_r^\top}$, so we can represent $\varphi_T(\bb)$ by the tuple associated to the root.
This tuple takes the following form:
\begin{align}
\label{eq:Mr_gluing_special_form}
\varphi_T(\bb)
=
\biggl(\sum_{{\sigma \in [\rho_\root,\lambda_i[,}
\atop
{x_{\sigma\lambda_i} = 1}} \prod_{\tau \in ]\rho_\root,\sigma]} b_\tau\biggr)_{1\leq i\leq r}
\eqqcolon
\bigl(x'_i(\bb)\bigr)_{1\leq i\leq r}.
\end{align}
Assuming without loss of generality that $\wt s_j(\rho_\root) = \lambda_{r-j}$, we have $x'_{r-j}(\bb) \equiv j$ by the definition of the pushforward slice.
Therefore we have the following formula for the map $C_{(T_r^\top,\wt \bs)}^{-1} \circ \varphi_T$:
\begin{align}
\bigl(C_{(T_r^\top,\wt \bs)}^{-1} \circ \varphi_T\bigr)(\bb)
=
\biggl(\sum_{{\sigma \in [\rho_\root,\lambda_i[,}
\atop
{x_{\sigma\lambda_i} = 1}} \prod_{\tau \in ]\rho_\root,\sigma]} b_\tau\biggr)_{1\leq i\leq r-2}
=
\bigl(x_i'(\bb)\bigr)_{1 \leq i \leq r-2}.
\end{align}
We will now show that for any $\ba \in (\bC\setminus\{0,1\})^{r-2} \setminus \Delta$, we can solve the collection of equations
\begin{align}
\label{eq:eqns_to_invert_injectivity_for_Mr}
x_i'(\bb) = \ba,
\quad
1 \leq i \leq r-2
\end{align}
for $\bb$, which is tantamount to showing that the restriction of $C_{(T_r^\top,\wt\bs)}^{-1} \circ \varphi_T$ is invertible.
We will do so by induction.
For $r=2$, it is trivial to solve \eqref{eq:eqns_to_invert_injectivity_for_Mr}.
Next, suppose that we have proven that we can solve \eqref{eq:eqns_to_invert_injectivity_for_Mr} up to, but not including, some $r \geq 3$, and consider a sliced tree $T \in K_r^\bC$ and the associated set of equations as in \eqref{eq:eqns_to_invert_injectivity_for_Mr}.
We may assume without loss of generality that $\lambda_{r-2} = s_1(\rho)$, where we denote by $\rho_0$ the outgoing neighbor of $\lambda_{r-2}$.
Then the system of equations \eqref{eq:eqns_to_invert_injectivity_for_Mr}' obtained by removing the $i=r-2$ equation from \eqref{eq:eqns_to_invert_injectivity_for_Mr} is equal to the system of equations associated to the tree $T'$, where $T'$ is the result of removing the two incoming edges of $\rho_0$.
By induction, we can solve \eqref{eq:eqns_to_invert_injectivity_for_Mr}' for $\bb' \coloneqq (b_\rho)_{\rho \neq \rho_0}$.
It remains to solve the $i=r-2$ equation in \eqref{eq:eqns_to_invert_injectivity_for_Mr}, i.e.\ the equation
\begin{align}
\label{eq:last_eqn_inj_for_Mr}
\sum_{{\sigma \in [\rho_\root,\lambda_{r-2}[,}
\atop
{x_{\sigma\lambda_{r-2}} = 1}} \prod_{\tau \in ]\rho_\root,\sigma]} b_\tau
=
a_{r-2}.
\end{align}
The variable $b_{\rho_0}$ occurs only once in \eqref{eq:last_eqn_inj_for_Mr}, in the monomial associated to $\sigma = \rho_0$.
We can therefore solve \eqref{eq:last_eqn_inj_for_Mr} for $b_{\rho_0}$:
\begin{align}
\label{eq:solved_for_last_var_inj_of_Mr}
b_{\rho_0}
=
\frac
{a_{r-2}
-
\sum_{{\sigma \in [\rho_\root,\rho_0[,}
\atop
{x_{\sigma\lambda_{r-2}}=1}}
\prod_{\tau \in ]\rho_\root,\sigma]} b_\tau}
{\prod_{\tau \in ]\rho_\root,\rho_0[} b_\tau}.
\end{align}
Our final task is to justify why \eqref{eq:solved_for_last_var_inj_of_Mr} produces a well-defined $b_{\rho_0}$ with the property that $\bb = \bb' \cup \{b_{\rho_0}\}$ lies in $X_T \cap (\bC\setminus\{0\})^{r-2}$.
By induction, $\bb'$ lies in $X_{T'} \cap (\bC\setminus\{0\})^{r-3}$, so the denominator of the right-hand side of \eqref{eq:solved_for_last_var_inj_of_Mr} is nonzero.
With $b_{\rho_0}$ chosen as in \eqref{eq:solved_for_last_var_inj_of_Mr}, we have $\bb \in X_T$: indeed, if $\bb$ were to lie in $\bC^{r-2} \setminus X_T$, then $\bigl(x_i'(\bb)\bigr)$ would not lie in $(\bC\setminus\{0,1\})^{r-2} \setminus \Delta$, which would contradict our assumption.
Finally, we have $\bb \in (\bC\setminus\{0\})^{r-2}$: by the inductive hypothesis, $\bb'$ lies in $(\bC\setminus\{0\})^{r-3}$, and we must have $b_{\rho_0} \neq 0$, since otherwise \eqref{eq:solved_for_last_var_inj_of_Mr} would imply $a_{r-1}=a_{r-2}$.

\medskip

\noindent
{\bf Step 2:}
{\it We prove the lemma in the $d(T) = 0$ case.}

\medskip

\noindent
We will establish Step 2 by showing that for any $T' \geq T$, $\varphi_T$ restricts to a bijection from $X_{T,T'}$ to $\ol M_{r,T'}^\bC$.
Define $\boldr \in \{0,1\}^{\#(V_\inte(T)\setminus\{\rho_\root\})}$ such that $T' = g_T(\boldr)$.
We can decompose $T$ as $T = \bigcup_{\rho \in V_\inte(T')} T(\rho)$ like so: for any $\rho \in V_\inte(T')$, denote its incoming neighbors by $\sigma_1, \ldots, \sigma_k$, and regard $\rho, \sigma_1, \ldots, \sigma_k$ as elements of $V(T)$ as in \eqref{eq:in_of_gT}.
Then we define $T(\rho)$ to be the subtree of $T$ bounded by $\rho, \sigma_1, \ldots, \sigma_k$.
In addition, we denote by $s_\rho$ the number of leaves of $T(\rho)$.
This decomposition of $T$ induces the following decomposition of $X_{T,T'}$:
\begin{align}
X_{T,T'}
=
\prod_{\rho \in V_\inte(T')}
X_{T(\rho),T_{s_\rho}^\top}.
\end{align}
With respect to this decomposition, $\varphi_T$ decomposes in turn as $\varphi_T = \bigl(\varphi_{T(\rho)}\bigr)_{\rho \in V_\inte(T)}$.
The current step now follows from Step 1, since each $T(\rho)$ has $d\bigl(T(\rho)\bigr) = 0$.

\medskip

\noindent
{\bf Step 3:}
{\it We prove the lemma.}

\medskip

\noindent
To show that the general case of the lemma holds, we can make an argument similar to our proof of the $d(T) = 0$ case in Steps 1 and 2.
We will now summarize how this goes.
First, consider the analogue of Step 1: the statement that for any sliced tree $T$, $\varphi_T$ restricts to the following bijection:
\begin{align}
\label{eq:restricted_gT_bijection_general_case}
\varphi_T
\colon
X_T^{\neq0}
\prod_{\rho \in V_\inte(T)}
\bigl((\bC\setminus\{0,1\})^{\#\incom(\rho)-2}\setminus\Delta\bigr)
\times
(\bC\setminus\{0\})^{\#V_\inte(T)-1}
\quad
\sr{\simeq}{\lra}
\quad
M_r^\bC.
\end{align}
We do so by showing that the composition of this restriction with $C_{(T_r^\top,\wt \bs)}^{-1}$ is a bijection, where $\wt\bs$ is the pushforward slice.
We have the following formula for this composition:
\begin{align}
\bigl(C_{(T_r^\top,\wt \bs)}^{-1} \circ \varphi_T\bigr)\bigl((\bx_\rho),\bb\bigr)
=
\biggl(\sum_{\sigma \in [\rho_\root,\lambda_i[} \Bigl(x_{\tau\sigma}\prod_{\tau \in ]\rho_\root,\sigma]} b_\tau\Bigr)\biggr)_{1\leq i\leq r-2}
=
\bigl(x_i'\bigl((\bx_\rho),\bb\bigr)\bigr)_{1 \leq i \leq r-2}.
\end{align}
To establish that the composition in question is a bijection, we must invert the system of equations
\begin{align}
\label{eq:eqns_to_invert_injectivity_for_Mr_general}
x_i'\bigl((\bx_\rho),\bb\bigr) = a_i,
\quad
1 \leq i \leq r-2,
\end{align}
for $\ba \in (\bC \setminus \{0,1\})^{r-2} \setminus \Delta$.
We can show that \eqref{eq:eqns_to_invert_injectivity_for_Mr_general} can be solved for $\bigl((\bx_\rho),\bb\bigr)$ by induction on $d(T)$.
We established the $d(T) = 0$ case in Step 1.
To establish the inductive hypothesis, suppose that we wish to solve \eqref{eq:eqns_to_invert_injectivity_for_Mr_general} for some particular $T$ with $d(T) \geq 1$.
Without loss of generality, we may assume $\lambda_{r-2} \neq \wt s_i(\rho_0)$, where $\rho_0$ denotes the outgoing neighbor of $\lambda_{r-2}$.
By induction, we can invert the system of equations \eqref{eq:eqns_to_invert_injectivity_for_Mr_general}' resulting from removing the $i=r-2$ equation from \eqref{eq:eqns_to_invert_injectivity_for_Mr_general}; it remains to show that we can solve the equation $x_{r-2}'\bigl((x_\rho),\bb\bigr) = a_{r-2}$.
We can do so, by the following formula, in which we denote by $\rho_0$ the outgoing neighbor of $\lambda_{r-2}$:
\begin{align}
x_{\rho_0\lambda_{r-2}}
=
\frac
{a_{r-2}
-
\sum_{\sigma \in [\rho_\root,\rho_0[}
\bigl(x_{\tau\sigma}
\prod_{\tau \in ]\rho_\root,\sigma]} b_\tau\bigr)}
{\prod_{\tau \in ]\rho_\root,\rho_0]} b_\tau}.
\end{align}
This establishes that $\varphi_T$ restricts to a bijection as in \eqref{eq:restricted_gT_bijection_general_case}.
The remainder of the current step follows as in Step 2.
\end{proof}

\begin{lemma}
\label{lem:Mr_transitions_algebraic}
For any sliced trees $(T_1,\bs^1), (T_2,\bs^2) \in \ol M_r^\bC$, the transition map $\varphi_{T_2}^{-1} \circ \varphi_{T_1}\colon X_{T_1,T_r^\top} \to X_{T_2,T_r^\top}$ is a morphism.
\end{lemma}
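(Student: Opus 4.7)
The plan is to factor the transition map through a common parametrization of the open stratum $M_r^\bC \coloneqq \ol M_{r, T_r^\top}$. For each $k \in \{1,2\}$, let $\wt\bs^k$ be the pushforward slice on $T_r^\top$ induced by $(T_k, \bs^k)$ via Definition~\ref{def:Mr_charts}, and set
\begin{align}
\Psi_k
\coloneqq
C_{(T_r^\top,\wt\bs^k)}^{-1} \circ \varphi_{T_k}
\colon
X_{T_k, T_r^\top}
\to
(\bC\setminus\{0,1\})^{r-2}\setminus\Delta.
\end{align}
I would then rewrite the transition as
\begin{align}
\varphi_{T_2}^{-1} \circ \varphi_{T_1}
=
\Psi_2^{-1} \circ \Phi \circ \Psi_1,
\qquad
\Phi \coloneqq C_{(T_r^\top,\wt\bs^2)}^{-1} \circ C_{(T_r^\top,\wt\bs^1)},
\end{align}
and verify that each of the three factors is a morphism.

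The factor $\Psi_1$ is polynomial essentially by construction: by Definition~\ref{def:Mr_charts}, the components of $\Psi_1\bigl((\bx_\rho^1),\bb^1\bigr)$ are the polynomials $p_{\rho_\root\lambda_i}^{C_{T_1}(\bx_\rho^1)}(\bb^1)$ indexed by those $i$ not corresponding to $\wt s_0^1(\rho_\root)$ or $\wt s_1^1(\rho_\root)$. The middle factor $\Phi$ is the unique affine reparametrization of $\bC$ carrying the two marked points distinguished by $\wt\bs^1$ (which sit at $0$ and $1$) to those distinguished by $\wt\bs^2$, evaluated at the remaining marked points; unwinding this yields coordinates of the form $(\wt a_i - \wt a_{j_0^2})/(\wt a_{j_1^2} - \wt a_{j_0^2})$, whose denominator is nonzero on $(\bC\setminus\{0,1\})^{r-2}\setminus\Delta$ since the extended tuple $\wt\ba$ has pairwise distinct entries (using $a_i \notin \{0,1\}$ and $\ba \notin \Delta$).

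The substantive step---and the main obstacle---is showing that $\Psi_2^{-1}$ is a morphism. My plan is to use the explicit recursive inversion carried out in the proof of Lemma~\ref{lem:image_of_Kr_chart}: each component of $\Psi_2^{-1}$ (the $b_\tau^2$'s, and the $x^2_{\rho\sigma}$'s when $T_2$ has positive depth) is produced by formulas of the shape \eqref{eq:solved_for_last_var_inj_of_Mr}, i.e.\ rational functions of $\ba$ whose denominators are products of previously-solved $b_\tau^2$'s. I would then argue by induction on $r$, mirroring the induction in that proof, that these rational formulas are regular on all of $(\bC\setminus\{0,1\})^{r-2}\setminus\Delta$. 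The induction is driven by the observation that Lemma~\ref{lem:image_of_Kr_chart} provides a set-theoretic bijection onto this open set whose image is contained in $X_{T_2, T_r^\top}$, where every $b_\tau^2$ is nonzero by definition; hence at every point of the domain the denominators appearing at each stage of the recursion take nonzero values, keeping the next stage regular. Once $\Psi_2^{-1}$ is seen to be a morphism, so is the composition $\Psi_2^{-1}\circ\Phi\circ\Psi_1$, completing the proof.
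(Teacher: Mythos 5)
Your proposal is correct and takes essentially the same route as the paper: both proofs factor the transition map through the slice parametrization $C_{(T_r^\top,\wt\bs^2)}$ of the open stratum $M_r^\bC$, computing the forward direction explicitly as a rational expression and deferring the regularity of the inverse chart (your $\Psi_2^{-1}$) to the recursive inversion carried out in Lemma~\ref{lem:image_of_Kr_chart}. Your three-factor decomposition $\Psi_2^{-1}\circ\Phi\circ\Psi_1$ simply separates the paper's Step~1 into the polynomial piece $\Psi_1$ (via the pushforward slice $\wt\bs^1$) and the affine slice-change $\Phi$, which the paper fuses into a single rational-function computation of $C_{(T_r^\top,\wt\bs^2)}^{-1}\circ\varphi_{T_1}$.
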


\begin{proof}
Denote by $\wt\bs^1, \wt\bs^2$ the pushforwards to $T_r^\top$ of $\bs^1, \bs^2$.

\medskip

\noindent
{\bf Step 1:}
{\it We show that the map $C_{(T_r^\top,\wt\bs^2)}^{-1} \circ \varphi_{(T_1,\bs^1)}$ is a morphism from $X_{T_1,T_r^\top}$ to $(\bC\setminus\{0,1\})^{r-2}\setminus\Delta$.}

\medskip

\noindent
The putative chart $\varphi_{(T_1,\bs^1)}$ acts in the following way, where as usual we represent an element of $M_r$ by the tuple $\bx_{\rho_\root}$:
\begin{align}
\label{eq:Mr_transition_first_tuple}
\varphi_{(T_1,\bs^1)}\bigl((\bx_\rho),\bb\bigr)
=
\Bigl(p_{\rho_\root\lambda_i}^{C_{(T_1,\bs^1)}(\bx_\rho)}\Bigr)_{1 \leq i \leq r}.
\end{align}
Without loss of generality, suppose $\wt s^2_j = r - j$ for $j \in \{0,1\}$.
The element of $M_r$ in \eqref{eq:Mr_transition_first_tuple} is equivalent to the element
\begin{align}
\Biggl(
\frac
{p_{\rho_\root\lambda_i}^{C_{(T_1,\bs^1)}(\bx_\rho)} - p_{\rho_\root\lambda_r}^{C_{(T_1,\bs^1)}(\bx_\rho)}}
{p_{\rho_\root\lambda_{r-1}}^{C_{(T_1,\bs^1)}(\bx_\rho)} - p_{\rho_\root\lambda_r}^{C_{(T_1,\bs^1)}(\bx_\rho)}}
\Biggr)_{1 \leq i \leq r}.
\end{align}
This tuple has the property that the $r$-th resp.\ $(r-1)$-th elements are identically 0 resp.\ 1, so we can now deduce a formula for the composition $C_{T_r^\top,\wt\bs^2}^{-1} \circ \varphi_{(T_1,\bs^1)}$:
\begin{align}
\bigl(C_{T_r^\top,\wt\bs^2}^{-1} \circ \varphi_{(T_1,\bs^1)}\bigr)
\bigl((\bx_\rho),\bb\bigr)
=
\Biggl(
\frac
{p_{\rho_\root\lambda_i}^{C_{(T_1,\bs^1)}(\bx_\rho)} - p_{\rho_\root\lambda_r}^{C_{(T_1,\bs^1)}(\bx_\rho)}}
{p_{\rho_\root\lambda_{r-1}}^{C_{(T_1,\bs^1)}(\bx_\rho)} - p_{\rho_\root\lambda_r}^{C_{(T_1,\bs^1)}(\bx_\rho)}}
\Biggr)_{1 \leq i \leq r-2}.
\end{align}
This expression is a rational function in $\bigl((\bx_\rho),\bb\bigr)$, so we have established Step 1.

\medskip

\noindent
{\bf Step 2:}
{\it We complete the proof of the lemma.}

\medskip

\noindent
First, note that an argument similar to the ones made in Lemma~\ref{lem:image_of_Kr_chart} shows that the map $\varphi_{(T_2,\bs^2)}^{-1} \circ C_{(T_r^\top,\wt\bs^2)}$ is a morphism from $(\bC\setminus\{0,1\})^{r-2}\setminus\Delta$ to $X_{T_2,T_r^\top}$.
It follows from this and Step 1 that $\varphi_{(T_2,\bs^2)}^{-1} \circ \varphi_{(T_1,\bs^1)}$ is a morphism from $X_{T_1,T_r^\top}$ to $X_{T_2,T_r^\top}$.
As illustrated in Example~\ref{ex:Mr_morphism}, this morphism extends to a morphism from $X_{T_1}$ to $X_{T_2}$.
\end{proof}

\begin{example}
\label{ex:Mr_morphism}
In the following figure, we compute two charts $\varphi_{T_1}, \varphi_{T_2}$ on $\ol M_4^\bC$, in the slice on $M_4^\bC$ pushed forward from $T_2$:
\begin{figure}[H]
\centering
\def\svgwidth{1.0\columnwidth}
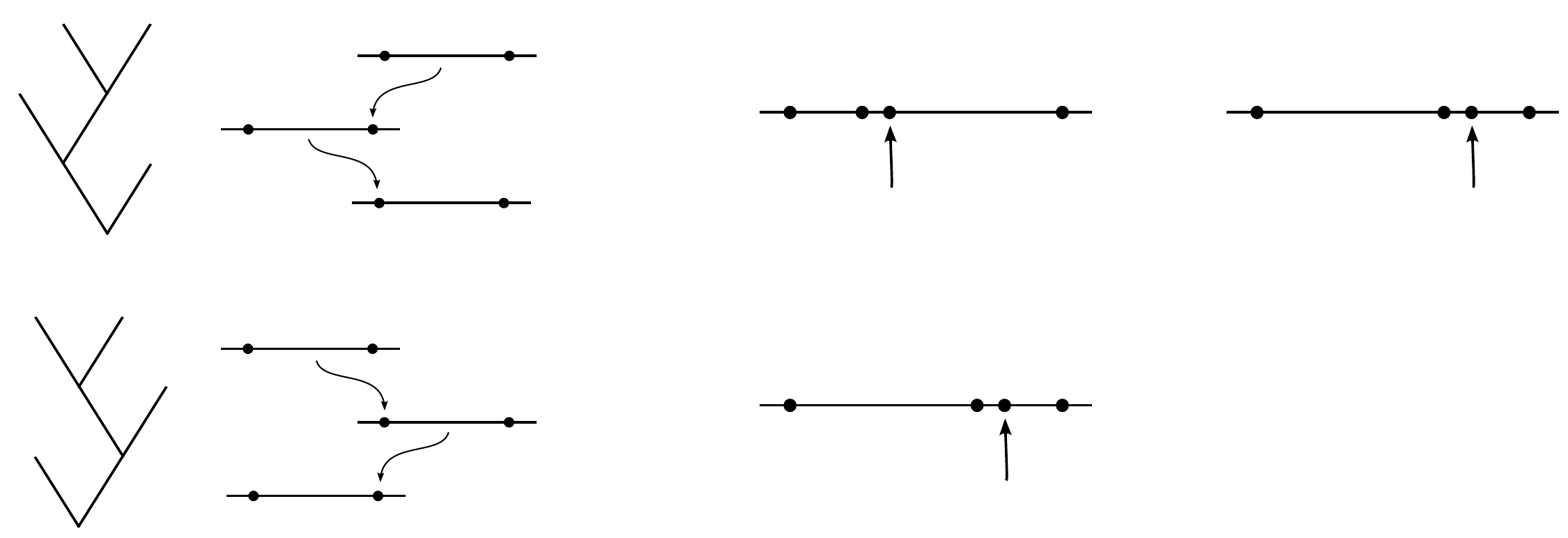
\end{figure}
\noindent
We see that to compute the transition map $\varphi_{T_2}^{-1} \circ \varphi_{T_1}$ on the open locus $X_{T_1,T_4^\top}$, we must solve the equations $1+r's' = 1+s$, $1 + r' = 1/r$.
These yield the formula
\begin{align}
\bigl(\varphi_{T_2}^{-1} \circ \varphi_{T_1}\bigr)|_{X_{T_1,T_4^\top}}(r,s)
=
\Bigl(\frac{1-r}r, \frac{rs}{1-r}\Bigr).
\end{align}
Denote by $T$ the common result of gluing either $T_1$ or $T_2$ along the edge labeled by $r$ resp.\ $r'$.
Our formula for $\varphi_{T_2}^{-1} \circ \varphi_{T_1}$ on $X_{T_1,T_r^\top}$ extends to $X_{T_1,T}$ by setting $s=0$:
\begin{align}
\bigl(\varphi_{T_2}^{-1}\circ\varphi_{T_1}\bigr)|_{X_{T_1,T}}(r,0)
=
\Bigl(\frac{1-r}r, 0\Bigr).
\end{align}
\null\hfill$\triangle$
\end{example}

\subsection{We equip $\ol M_r^\bC$ with the structure of a proper abstract variety}
\label{ss:Mr-bar-charts}

In this subsection, we will use the atlas we constructed in \S\ref{ss:Mr_charts} to equip $\ol M_r$ with the structure of an abstract prevariety, which we will then show is separated (hence a variety) and proper.

\medskip

\noindent
{\bf Definitions 1.1 and 3.5, \cite{osserman:atlas}.}
A \emph{prevariety} $X$ over an algebraically closed field $k$ is an irreducible topological space together with an open cover $U_1,\ldots,U_m$ and a collection of homeomorphisms $\varphi_i\colon X_i \stackrel{\simeq}{\longrightarrow} U_i$, where each $X_i \subset \bA^{n_i}$ is an affine variety equipped with the Zariski topology, and we require that every transition map
\begin{align}
\varphi_{ij}
\coloneqq
\varphi_j^{-1}\circ\varphi_i,
\qquad
\varphi_{ij}
\colon
\varphi_i^{-1}(U_i\cap U_j)
\to
\varphi_j^{-1}(U_i\cap U_j)
\end{align}
is a morphism.
We say that a prevariety $X$ is a \emph{variety} if the image of the diagonal morphism $X \to X \times X$ is closed.
$\null\hfill\triangle$

\begin{lemma}
\label{lem:Mr_variety}
There is a unique topology on $\ol M_r^\bC$ such that $\bigl(\ol M_r^\bC, (\varphi_T)\bigr)$ is a variety.
\end{lemma}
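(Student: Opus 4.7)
The plan is to construct the topology on $\ol M_r^\bC$ as the unique topology making every $\varphi_T$ into a homeomorphism onto its image, verify that this yields an open cover by finitely many quasi-affine charts, check compatibility of transitions, and finally conclude irreducibility and uniqueness. Concretely, I would declare $U \subset \ol M_r^\bC$ to be open if and only if $\varphi_T^{-1}(U) \subset X_T$ is Zariski-open for every $T \in K_r^\bC$. This trivially defines a topology, and it is the unique one under which each $\varphi_T$ is a homeomorphism onto its image. Since $K_r^\bC$ is a finite set, this yields a finite cover, which is what a prevariety structure demands.

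Next, I would verify the cover axiom itself. By Lemma~\ref{lem:image_of_Kr_chart}, $\varphi_T$ is injective with $\varphi_T(X_T) = \bigcup_{T' \geq T} \ol M_{r,T'}$, so each $\varphi_T(X_T)$ is automatically open in the topology defined above (its preimage under $\varphi_T$ is all of $X_T$, and preimages under other charts $\varphi_{T''}$ pull back to Zariski-open subsets once compatibility of transitions is established). To see that these images cover $\ol M_r^\bC$, observe that any stratum $\ol M_{r,T_0}$ is contained in $\varphi_T(X_T)$ for any $T \leq T_0$; choosing $T$ to be a binary refinement of $T_0$ produces a chart whose image meets $\ol M_{r,T_0}$, and ranging over all binary $T$ covers every stratum. (If the reference definition of prevariety requires strictly affine charts rather than quasi-affine ones, I would further refine each $X_T$ by a finite cover by principal affine opens inside $\bC^{r-2}$, which does not affect the argument.)

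The central step is verifying that every transition $\varphi_{T_2}^{-1} \circ \varphi_{T_1}$ is a morphism on its domain of definition inside $X_{T_1}$. Lemma~\ref{lem:Mr_transitions_algebraic} already furnishes this on the top stratum $X_{T_1,T_r^\top}$ and provides an explicit rational formula in $\bigl((\bx_\rho),\bb\bigr)$. The task is to show this formula extends across all lower strata $X_{T_1,T'}$ meeting $\varphi_{T_1}^{-1}(\varphi_{T_2}(X_{T_2}))$. The mechanism, visible in Example~\ref{ex:Mr_morphism}, is that whenever a denominator in the rational formula threatens to vanish, one of the $b_\rho$ in $T_2$'s coordinates must vanish in compensation, and the polynomial identities defining $p_{\rho\sigma}^C$ together with the stratum description of Lemma~\ref{lem:X_T_strats} yield the cancellation. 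I would carry this out by induction on the poset distance $d(T_1) - d(T)$ at a stratum $X_{T_1,T}$, using the decomposition of $X_{T_1,T}$ into a product over $V_\inte(T)$ introduced in Step 2 of Lemma~\ref{lem:image_of_Kr_chart}. This reduces the extension question to the ``one-level'' case, where a direct inspection of the leading terms of $p^C_{\rho\sigma}$ in the gluing parameters $b_\rho$ exhibits the transition as a regular function on the full overlap. This is where the real obstacle lies, and the proof will rely heavily on the formula for $q_{ij}|_{X_{T,T'}}$ derived in Lemma~\ref{lem:X_T_strats}.

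Finally, irreducibility of $\ol M_r^\bC$ follows because the top stratum $\ol M_{r,T_r^\top} \cong (\bC\setminus\{0,1\})^{r-2}\setminus\Delta$ is open, dense, and irreducible (being a Zariski-open subset of affine space), and is contained in every chart image by Lemma~\ref{lem:image_of_Kr_chart}; hence any two nonempty opens meet it and thus meet each other. Uniqueness of the topology is immediate from the defining requirement that the $\varphi_T$ be homeomorphisms. Separatedness, and hence the upgrade from prevariety to variety, is handled separately in Lemma~\ref{lem:Mr_sep_proper} and is not needed here.
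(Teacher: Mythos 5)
Your proposal follows the paper's proof of Lemma~\ref{lem:Mr_variety} closely: the same definition of the topology (declare $U$ open iff $\varphi_T^{-1}(U)$ is Zariski-open for every $T$), the same appeal to Lemma~\ref{lem:Mr_transitions_algebraic} for the transition maps, and essentially the same irreducibility argument via the density and irreducibility of the open top stratum. You go a bit further than the paper in two respects --- you explicitly verify the cover axiom, and you flag that Lemma~\ref{lem:Mr_transitions_algebraic} as stated only gives a morphism $X_{T_1,T_r^\top} \to X_{T_2,T_r^\top}$, so that the extension to all of $X_{T_1}$ requires an argument (the paper delegates this extension to Example~\ref{ex:Mr_morphism} inside the proof of Lemma~\ref{lem:Mr_transitions_algebraic}) --- but these are elaborations of, rather than departures from, the paper's strategy.
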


\begin{proof}
{\bf Step 1:}
{\it We equip $\ol M_r^\bC$ with a topology in which the charts are homeomorphisms, and observe that this is the unique topology with this property.}

\medskip

\noindent
Define a topology on $\ol M_r^\bC$ by declaring $U \subset \ol M_r^\bC$ to be open exactly when $\varphi_T^{-1}(U) \subset X_T$ is Zariski-open for every $T$.
The continuity of the transition maps then implies that every chart $\varphi_T\colon X_T \to \ol M_r^\bC$ is a homeomorphism onto its image $\varphi_T(X_T) \eqqcolon U_C$.
It is clear that this is the unique topology with this property.

\medskip

\noindent
{\bf Step 2:}
{\it The top stratum $M_r^\bC \subset \ol M_r^\bC$ is homeomorphic to an open subset of $\bC^{r-2}$, and it is dense in $\ol M_r^\bC$.}

\medskip

\noindent
By Lemma~\ref{lem:image_of_Kr_chart} and Step 1 of the current lemma, the top stratum $M_r^\bC = \ol M_{r,T_r^\top}^\bC$ is homeomorphic to $X_{T,T_r^\top}$.
By \eqref{eq:Kr_local_model_def}, the latter space is an open subset of affine space:
\begin{align}
X_{T,T_r^\top}
=
(\bC^r \setminus \Delta)/_{\bC \rtimes (\bC\setminus\{0\})}
=
(\bC \setminus \{0,1\})^{r-2} \setminus \Delta
\subset
\bC^{r-2}.
\end{align} 

To see that $M_r^\bC$ is dense in $\ol M_r^\bC$, it suffices to show that, for every $T \in K_r^\bC$, $\varphi_T^{-1}(M_r^\bC)$ is dense in $X_T$.
This follows from the fact that $\varphi_T^{-1}(M_r^\bC)$ is equal to $X_T^{\neq0}$, where this latter object was defined in \eqref{eq:restricted_gT_bijection_general_case}.

\medskip

\noindent
{\bf Step 3:}
{\it We complete the proof of the lemma.}

\medskip

\noindent
Lemma~\ref{lem:Mr_transitions_algebraic} shows that the transition maps are morphisms, so to complete the proof of the claim, it only remains to show that $\ol M_r^\bC$ is irreducible.
Suppose that $K,L \subset \ol M_r^\bC$ are nonempty subsets with $K \cup L = \ol M_r^\bC$.
Then $K \cap M_r^\bC$ and $L \cap M_r^\bC$ are relatively closed subsets of $M_r^\bC$ whose union is $M_r^\bC$, so by Step 2 and the fact that open subsets of complex affine space are irreducible, either $K$ or $L$ contains $M_r^\bC$.
Suppose without loss of generality that $K$ contains $M_r^\bC$.
It follows from the density of $M_r^\bC$ in $\ol M_r^\bC$ that $K$ is equal to $\ol M_r^\bC$, so we have proven the irreducibility of $\ol M_r^\bC$.
\end{proof}

Next, we will show that this topology on $\ol M_r^\bC$ is separated and proper.
We first recall characterizations of separatedness and properness of a complex scheme in terms of the analytic topology.

\medskip

\noindent
{\bf Corollary 3.3, \cite{osserman:complex}.}
{\it Let $X$ be a scheme of finite type over $\bC$.
Then $X$ is separated over $\bC$ if and only if $X_\an$ is Hausdorff.}

\medskip

\noindent
{\bf Corollary 3.5, \cite{osserman:complex}.}
{\it A separated scheme $X$ of finite type over $\bC$ is proper over $\bC$ if and only if $X_\an$ is compact.}

\medskip

Next, we note that we can equip $\ol M_r^\bC$ with an \emph{a priori} different topology, the \emph{Gromov topology}, in which the convergent sequences are exactly the Gromov-convergent sequences.
This is entirely analogous to the construction of the Gromov topology on $\ol{2M}_\bn^\bR$, in which the convergent sequences are exactly the Gromov-convergent ones; the latter notion is defined in \cite[Def.\ 2.3]{b:realization}.
Using the following consequence of the closed mapping theorem, we will show in our proof of Lemma~\ref{lem:Mr_sep_proper} below that the Gromov topology agrees with the analytic topology associated to the variety structure on $\ol M_r^\bC$.
As in \cite[Thm.\ 1.1]{b:realization}, the Gromov topology on $\ol M_r^\bC$ is metrizable, hence first-countable and Hausdorff.

\medskip

\noindent
{\bf Lemma 4.25(d), \cite{lee:top_man}.}
{\it If $F\colon X \to Y$ is a continuous bijection, if $X$ is compact, and if $Y$ is Hausdorff, then $F$ is a homeomorphism.}

\begin{lemma}
\label{lem:Mr_sep_proper}
$\ol M_r^\bC$ is separated and proper.
\end{lemma}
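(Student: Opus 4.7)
The strategy is to equip $\ol M_r^\bC$ with the Gromov topology $\tau_G$ (defined analogously to \cite[Def.\ 2.3]{b:realization}) and to show that $\tau_G$ coincides with the analytic topology $\tau_\an$ coming from the variety structure of Lemma~\ref{lem:Mr_variety}. Since $(\ol M_r^\bC,\tau_G)$ is compact and metrizable --- compactness being the standard genus-0 Gromov compactness for stable marked curves, and metrizability an argument parallel to \cite[Thm.\ 1.1]{b:realization} --- the identification $\tau_G=\tau_\an$ immediately yields both separatedness and properness, via \cite[Cor.\ 3.3]{osserman:complex} and \cite[Cor.\ 3.5]{osserman:complex} respectively.

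The core step is to prove bidirectional sequential continuity of $\id\colon (\ol M_r^\bC,\tau_G)\to (\ol M_r^\bC,\tau_\an)$. Both topologies are first-countable ($\tau_\an$ because each chart is an open subset of $\bC^{r-2}$), so this suffices to conclude $\tau_G=\tau_\an$. Given a sequence $[C_n]$ with limit $[C_\infty]$ in either topology, let $T$ be the combinatorial type of $C_\infty$ and fix a slice on $T$. Corollary~\ref{cor:X_T_contains_0} guarantees that $\varphi_T^{-1}([C_\infty])=\bigl((\bx_\rho)^\infty,\bzero\bigr)$ lies in $X_T$, and that for large $n$ the sequence lifts to chart coordinates $\bigl((\bx_\rho)^n,\bb^n\bigr)\in X_T$ with $\varphi_T\bigl((\bx_\rho)^n,\bb^n\bigr)=[C_n]$. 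The Gromov-convergence condition on $[C_n]$ translates into the convergence $\bigl((\bx_\rho)^n,\bb^n\bigr)\to \bigl((\bx_\rho)^\infty,\bzero\bigr)$ in $\bC^{r-2}$, and conversely: vanishing of the gluing parameters $\bb\to\bzero$ records which nodes of $C_\infty$ remain unresolved, while convergence of the screen variables $\bx_\rho$ records the limiting positions of the marked points.

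Once $\tau_G=\tau_\an$ is in hand, $\tau_\an$ inherits compactness and Hausdorffness from $\tau_G$, and the proof is complete. As an alternative to the reverse direction of continuity, one can appeal to \cite[Lem.\ 4.25(d)]{lee:top_man}, which --- given that $\id\colon \tau_G\to \tau_\an$ is a continuous bijection from a compact space --- reduces the claim to a direct check that $\tau_\an$ is Hausdorff. The main obstacle is the chart-coordinate translation in the sequential continuity step: one must verify that the polynomials $p^{C_T(\bx_\rho)}_{\rho\sigma}$ of Definition~\ref{def:Mr_local_model} realize the standard analytic gluing construction used in the definition of Gromov convergence, so that convergence of $\bb$ together with convergence of $\bx_\rho$ on the chart side matches the Gromov data on the stable-curve side exactly. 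Once this dictionary is in place, the rest is formal.
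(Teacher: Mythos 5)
Your overall strategy---show that the Gromov and analytic topologies coincide, then read off separatedness and properness from Osserman's criteria applied to the known Hausdorffness and compactness of $\tau_G$---matches the paper's. But the ``core step'' as you've sketched it has a genuine gap, and your description of the Lee's-lemma shortcut has the roles reversed relative to what makes the paper's argument go through.

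The substantive gap is the Gromov-to-analytic direction. You assert that the Gromov-convergence condition ``translates into'' the chart-coordinate convergence $\bigl((\bx_\rho)^n,\bb^n\bigr)\to\bigl((\bx_\rho)^\infty,\bzero\bigr)$, but Gromov convergence only asserts the existence of \emph{some} sequence of rescalings for which the axioms hold, not that the particular canonical rescalings determined by the chosen slice do the job---and it is the latter that controls the chart coordinates. You also cite Corollary~\ref{cor:X_T_contains_0} as the reason $[C_n]$ eventually lifts to $X_T$; the corollary does no such thing (it places only the limit $((\bx_\rho)^\infty,\bzero)$ in $X_T$), and the fact that a Gromov-convergent sequence eventually has combinatorial type $\geq T$ is an additional input that you would need to justify. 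The paper proves only the \emph{analytic-to-Gromov} direction (its Step~1), by explicitly writing down rescalings $\varphi_\rho^\nu$ from the chart data, and then closes the circle without ever proving the harder converse: it applies Lee's Lemma 4.25(d) to $\id\colon \ol M_{r,\an}^\bC \to \ol M_{r,\Gr}^\bC$ restricted to the compact sets $\varphi_T\bigl(\ol B_{\eps_T}(0)\bigr)$---compactness supplied precisely by Corollary~\ref{cor:X_T_contains_0}, which is the actual role that corollary plays here---using $\tau_G$ as the Hausdorff codomain. This exploits the fact that $\tau_G$ is already known to be Hausdorff, being metrizable. Your proposed Lee's-lemma application runs the map in the other direction, taking $\tau_G$ as the compact domain and $\tau_\an$ as the codomain, and therefore requires $\tau_\an$ to be Hausdorff---which is exactly the separatedness claim one is trying to prove, so it would have to be established independently, circular in spirit and strictly more work in practice.
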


\begin{proof}
{\bf Step 1:}
{\it If $(p^\nu)$ converges to $p^\infty$ in the analytic topology on $\ol M_r^\bC$, then it also converges to $p^\infty$ in the Gromov topology on $\ol M_r^\bC$.}

\medskip

\noindent
Fix such a sequence $p^\nu \to p^\infty$, and let $T$ be the rooted tree such that $p^\infty$ is an element of $\ol M_{r,T}^\bC$.
Without loss of generality we may assume that $(p^\nu)$ is contained in a single stratum $\ol M_{r,T'}^\bC$ with $T' \geq T$.
For simplicity, we will assume $T' = T_r^\top$ for the rest of this step.
Set $\bigl((\bx_\rho^\nu), \bb^\nu)\bigr) \coloneqq \varphi_T^{-1}(p^\nu)$, and note that each $(\bx_\rho^\nu)$ converges to some $\bx_\rho^\infty$, and $(\bb^\nu)$ is a sequence of elements of $(\bC\setminus\{0\})^{\#V_\inte(T)-1}$ that converges to $\bzero$.

To show that $\varphi_T\bigl((\bx_\rho^\nu),\bb^\nu\bigr)$ Gromov-converges to $\varphi_t\bigl((\bx_\rho^\infty),\bzero\bigr)$, we must produce a sequence of rescalings $(\varphi_\rho^\nu) \subset \bC \rtimes (\bC\setminus\{0\})$ for every $\rho \in V_\inte(T)$.
We define $\varphi_\rho^\nu$ by the following formula:
\begin{align}
(\varphi_\rho^\nu)^{-1}(a)
\coloneqq
\frac
{a-\sum_{\tau \in [\rho_\root,\rho[} \Bigl(x_{\tau\rho}\prod_{\upsilon \in ]\rho_\root,\tau]} b_\upsilon\bigr)}
{\prod_{\upsilon \in ]\rho_\root,\rho]} b_\upsilon}.
\end{align}
Let us now show that with this sequence of rescalings, {\sc (special point')} holds.
To do so, we must calculate $\lim_{\nu\to\infty} (\varphi_\rho^\nu)^{-1}(x^\nu_{\rho\lambda_i})$, where we have set $\bigl(T_r^\top,(\bx_\rho^\nu)\bigr) \coloneqq \varphi_T\bigl((\bx_\rho^\nu),\bb^\nu\bigr)$.

\begin{itemize}
\item First, consider the case that $\lambda_i$ lies in $T_\rho$.
We begin by rewriting $x_{\rho\lambda_i}^\nu$:
\begin{align}
x_{\rho\lambda_i}^\nu
&=
\sum_{\tau \in [\rho_\root,\lambda_i[}
\Bigl(x^\nu_{\tau\lambda_i}\prod_{\upsilon \in ]\rho_\root,\tau]} y^\nu_\upsilon\Bigr)
\\
&=
\sum_{\tau \in [\rho_\root,\rho[}
\Bigl(x^\nu_{\tau\rho}\prod_{\upsilon \in ]\rho_\root,\tau]} y^\nu_\upsilon\Bigr)
+
\prod_{\upsilon \in ]\rho_\root,\rho]} y^\nu_\upsilon
\cdot
\sum_{\tau \in [\rho,\lambda_i[} \Bigl(x^\nu_{\tau\lambda_i}\prod_{\upsilon \in ]\rho,\tau]} y^\nu_\upsilon\Bigr)
\nonumber
\end{align}
This allows us to calculate $\lim_{\nu \to \infty} (\varphi_\rho^\nu)^{-1}(x_{\rho\lambda_i}^\nu)$:
\begin{gather}
(\varphi_\rho^\nu)^{-1}(x^\nu_{\rho\lambda_i})
=
\sum_{\tau \in [\rho,\lambda_i[} \Bigl( x^\nu_{\tau\lambda_i}\prod_{\upsilon \in ]\rho,\tau]} y^\nu_\upsilon\Bigr)
\:\:\sr{\nu\to\infty}{\lra}\:\:
x_{\rho\lambda_i}^\infty.
\end{gather}

\item Second, consider the case that $\lambda_i$ does not lie in $T_\rho$, and denote by $\sigma$ the vertex closest to $\rho_\root$ in the path from $\rho$ to $\lambda_i$.
A manipulation similar to the one we made in the previous bullet shows that $(\varphi_\rho^\nu)^{-1}(x_{\rho\lambda_i}^\nu)$ is a Laurent polynomial in $\bb^\nu$ whose lowest-degree term is $(x_{\sigma\lambda_i}^\nu-x_{\sigma\rho}^\nu)/\prod_{\upsilon \in ]\rho_\root,\sigma]} b_\upsilon$.
This term dominates as $\nu\to\infty$, hence in this case we have $\lim_{\nu\to\infty} (\varphi_\rho^\nu)^{-1}(x_{\rho\lambda_i}^\nu) = \infty$.
\end{itemize}

A similar calculation shows that with this choice of rescalings, the {\sc (rescaling)} axiom also holds.

\medskip

\noindent
{\bf Step 2:}
{\it The identity map $\id\colon \ol M_{r,\an}^\bC \to \ol M_{r,\Gr}^\bC$ is continuous.}

\medskip

\noindent
First, note that $\ol M_{r,\an}^\bC$ is first-countable: indeed, fix $x \in \ol M_{r,\an}^\bC$, and choose a chart $\varphi_T$ with $x$ in its image.
The domain $X_T$ of $\varphi_T$ is first-countable, since it is a subspace of $\bC^{r-2}$.
If we choose $(B_i)$ to be the result of pushing forward a neighborhood basis for $\varphi^{-1}(x)$ to $\ol M_{r,\an}^\bC$, then $(B_i)$ is a neighborhood basis for $x$: indeed, this follows from the fact that for any open $U \subset \ol M_{r,\an}^\bC$, $\varphi^{-1}(U)$ is open in $X_T$.

Since $\ol M_{r,\Gr}^\bC$ is first-countable, it follows from \cite[Cor.\ 10.5.c]{willard} that to establish the current step, it suffices to prove that if $(p_i)$ converges to $p_\infty$ in the analytic topology on $\ol M_r^\bC$, then it also converges to $p_\infty$ in the Gromov topology.
This is exactly what we showed in Step 1.

\medskip

\noindent
{\bf Step 3:}
{\it The topological spaces $\ol M_{r,\an}^\bC$ and $\ol M_{r,\Gr}^\bC$ are homeomorphic, where $\ol M_{r,\an}^\bC$ denotes the analytic topology on $\ol M_r^\bC$.}

\medskip

\noindent
For any $T$, it follows from Cor.~\ref{cor:X_T_contains_0} that $X_T$ is a Zariski-open subset of affine space that contains 0.
It follows that there exists an $\eps_T>0$ with $\ol B_{\eps_T}(0) \subset X_T$.
The closed ball $\ol B_{\eps_T}(0)$ is compact in the analytic topology, and $\ol M_{r,\Gr}^\bC$ is Hausdorff, so by Lee's Lemma 4.25(d) quoted above, $I$ restricts to a homeomorphism from $\varphi_T\bigl(\ol B_{\eps_T}(0)\bigr) \subset \ol M_{r,\an}^\bC$ to the same set considered as a subset of $\ol M_{r,\an}^\bC$.
The collection $\bigl(\varphi_T(B_{\eps_T}(0))\bigr)_C$ covers $\ol M_r^\bC$, so this establishes that $\ol M_{r,\an}^\bC$ and $\ol M_{r,\Gr}^\bC$ are homeomorphic.

\medskip

\noindent
{\bf Step 4:}
{\it We prove the lemma.}

\medskip

\noindent
By Step 1 of this proof and Corollaries 3.3 and 3.5 from \cite{osserman:complex}, it suffices to show that $\ol M_{r,\Gr}^\bC$ is Hausdorff and compact.
These properties can be established analogously with Theorems 2.10 and 2.6, \cite{b:realization}; we omit the proof.
\end{proof}

\section{Construction of $\ol{2M}_\bn^\bC$ via explicit charts}
\label{s:Wn-bar}

In this section, we will define $\ol{2M}_\bn^\bC$ as a complex variety.
Our first step toward this goal is to construct it as a set, which is a straightforward modification of the construction of $\ol{2M}_\bn^\bR$ in \cite{b:realization}.
Next, we equip $\ol{2M}_\bn^\bC$ with an atlas, which gives $\ol{2M}_\bn^\bC$ the structure of a proper complex variety with toric singularities.
This allows us to prove our main result, Thm.~\ref{thm:2Mn-bar}:

\begin{proof}[Proof of Thm.~\ref{thm:2Mn-bar}]
We show in Lemma~\ref{lem:2Mn_variety} that the atlas consisting of the gluing maps $2\varphi_{2T}$ defined below endows $\ol{2M}_\bn^\bC$ with the structure of a prevariety.
Next, we show in Lemma~\ref{lem:2Mn_sep_proper} that $\ol{2M}_\bn^\bC$ is separated (hence a variety) and proper.
Finally, we show in Lemma~\ref{lem:2X_2T_reduced_normal} that $2X_{2T}$ is reduced and normal.

We can define the forgetful morphism $\pi\colon \ol{2M}_\bn^\bC \to \ol M_r^\bC$ is defined by sending $\bigl(T_b \to T_s, (\bx_\rho), (\bz_\alpha)\bigr)$ to $\bigl(T_s, (\bx_\rho)\bigr)$, in the notation of \S\ref{ss:2Mn-bar_charts}.
The fact that $\pi$ is a morphism is evident from the form of our atlases.
\end{proof}

\subsection{Definition of $W_\bn^\bC$}
\label{ss:Wn}

We begin by defining the poset $W_\bn^\bC$.
This is the analogue of the poset $W_\bn = W_\bn^\bR$ defined by the first author in \cite{b:2ass}, which he called a \emph{2-associahedron}.
The reader should think of $W_\bn^\bC$ as the poset of strata of our compactified moduli space $\ol{2M}_\bn^\bC$.
We will present two equivalent definitions $W_\bn^{\bC,\tree}, W_\bn^{\bC,\br}$ of $W_\bn^\bC$, analogous to our two presentations of $K_r^\bC$ in \S\ref{ss:Kr}.

\begin{definition}
\label{p:2T}
A \emph{$\bC$-type stable tree-pair of type $\bn$} is a datum $2T = T_b \sr{f}{\to} T_s$, with $T_b, T_s, f$ described below:
\begin{itemize}
\item The \emph{bubble tree} $T_b$ is a rooted tree whose edges are either solid or dashed, which we orient toward the root $\alpha_\root^{T_b}$.
We require $T_b$ to satisfy these properties:
	\begin{itemize}
		\item The vertices of $T_b$ are partitioned as $V(T_b) = V_\comp \sqcup V_\seam \sqcup V_\mk$, where:
			\begin{itemize}
				\item Every $\alpha \in V_\comp$ has $\geq 1$ solid incoming edge, no dashed incoming edges, and either a dashed or no outgoing edge.
				We partition $V_\comp \eqqcolon V_\comp^1 \sqcup V_\comp^{\geq2}$ according to the number of incoming edges of a given vertex.
				\item Every $\alpha \in V_\seam$ has $\geq 0$ dashed incoming edges, no solid incoming edges, and a solid outgoing edge.
				\item Every $\alpha \in V_\mk$ has no incoming edges and either a dashed or no outgoing edge.
				$V_\mk$ has $|\bn|$ elements, which are labeled by pairs $(i,j)$ for $1 \leq i \leq r$ and $1 \leq j \leq n_i$.
				We denote by $\mu_{ij}^{T_b}$ the element of $V_\mk$ labeled by $(i,j)$.
			\end{itemize}
		\item ({\sc stability}) If $\alpha$ is a vertex in $V_\comp^1$ and $\beta$ is its incoming neighbor, then $\#\!\incom(\beta) \geq 2$; if $\alpha$ is a vertex in $V_\comp^{\geq2}$ and $\beta_1,\ldots,\beta_\ell$ are its incoming neighbors, then there exists $j$ with $\#\!\incom(\beta_j) \geq 1$.
	\end{itemize}
	\item The \emph{seam tree} $T_s$ is an element of $K_r^\bC$.
	\item The \emph{coherence map} is a map $f\colon T_b \to T_s$ of sets having these properties:
		\begin{itemize}
			\item $f$ sends root to root, and if $\beta \in \incom(\alpha)$ in $T_b$, then either $f(\beta) \in \incom(f(\alpha))$ or $f(\alpha) = f(\beta)$.
			\item $f$ contracts all dashed edges, and every solid edge whose terminal vertex is in $V_\comp^1$.
			\item For any $\alpha \in V_\comp^{\geq2}$, $f$ maps the incoming edges of $\alpha$ bijectively onto the incoming edges of $f(\alpha)$.
			\item $f$ sends $\mu_{ij}^{T_b}$ to $\lambda_i^{T_s}$.
		\end{itemize}
\end{itemize}
We denote by $W_\bn^\bC$\label{p:WnC} the set of isomorphism classes of $\bC$-type stable tree-pairs of type $\bn$.
Here an isomorphism from $T_b \sr{f}{\to} T_s$ to $T_b' \sr{f'}{\to} T_s'$ is a pair of maps $\varphi_b\colon T_b \to T_b'$ and $\varphi_s\colon T_s \to T_s'$ that fit into a commutative square in the obvious way and that respect all the structure of the bubble trees and seam trees.
We upgrade $W_\bn^\bC$ to a poset, by the obvious analogue of the poset structure on $W_\bn^\bC$ presented in \cite[text preceding Ex.\ 3.6 and Def.\ 3.8]{b:2ass}.
\null\hfill$\triangle$
\end{definition}

\begin{definition}
\label{def:2bracket}
A \emph{$\bC$-type 2-bracket of $\bn$} is a pair $\btB = (B, (2B_i))$\label{p:btB} consisting of a 1-bracket $B \subset \{1,\ldots,r\}$ and a subset $2B_i \subset \{1,\ldots,n_i\}$ for every $i \in B$ such that at least one $2B_i$ is nonempty.
We write $\btB' \subset \btB$ if $B' \subset B$ and $2B_i' \subset 2B_i$ for every $i \in B'$, and we define $\pi(B,(2B_i)) \coloneqq B$.
\null\hfill$\triangle$
\end{definition}

\begin{definition}
\label{def:Wn_br}
A \emph{$\bC$-type 2-bracketing of $\bn$} is a pair $(\sB, \stB)$,\label{p:sBstB} where $\sB$ is a 1-bracketing of $r$ and $\stB$ is a collection of 2-brackets of $\bn$ that satisfies these properties:
\begin{itemize}
\item[] {\sc (1-bracketing)} For every $\btB \in \stB$, $\pi(\btB)$ is contained in $\sB$.

\medskip

\item[] {\sc (2-bracketing)}
Suppose that $\btB, \btB'$ are elements of $\stB$, and that for some $i_0 \in \pi(\btB) \cap \pi(\btB')$, the intersection $2B_{i_0} \cap 2B_{i_0}'$ is nonempty.
Then either $\btB \subset \btB'$ or $\btB' \subset \btB$.

\medskip

\item[] {\sc (root and marked points)} $\stB$ contains $(\{1,\ldots,r\},(\{1,\ldots,n_1\},\ldots,\{1,\ldots,n_r\}))$ and every 2-bracket of $\bn$ of the form $(\{i\},(\{j\}))$.
\end{itemize}

\noindent For any $B_0 \in \sB$, write $\stB_{B_0} \coloneqq \{(B,(2B_i)) \in \stB \:|\: B = B_0\}$.

\begin{itemize}
\item[] {\sc (marked seams are unfused)}
\begin{itemize}
\item For any $B_0 \in \sB$ and for any $i \in B_0$, we have $\bigcup_{\btB \in \stB_{B_0}} 2B_i = \{1,\ldots,n_i\}$.

\item For every $\btB \in \stB_{B_0}$ for which there exists $\btB' \in \stB_{B_0}$ with $\btB' \subsetneq \btB$, and for every $i \in B_0$ and $j \in 2B_i$, there exists $\btB'' \in \stB_{B_0}$ with $\btB'' \subsetneq \btB$ and $2B''_i \ni j$.
\end{itemize}
\end{itemize}
We define $W_\bn^{\bC,\br}$ to be the set of 2-bracketings of $\bn$, with the poset structure defined by declaring $(\sB',\stB') < (\sB,\stB)$ if the containments $\sB' \supset \sB$, $\stB' \supset \stB$ hold and at least one of these containments is proper.
\null\hfill$\triangle$
\end{definition}

By an argument analogous to \cite[Proof of Thm.\ 3.17]{b:2ass}, $W_\bn^{\bC,\tree}$ and $W_\bn^{\bC,\br}$ are isomorphic as posets.
We denote this common poset by $W_\bn^\bC \coloneqq W_\bn^{\bC,\tree} = W_\bn^{\bC,\br}$.

Next, we will define local models $2g_{2T}\colon 2p_{2T} \hra W_\bn^\bC$.
For any stable tree-pair $2T \in W_\bn^\bC$, define a poset $2p_{2T}$ like so:
\begin{gather}
2p_{2T}
\coloneqq
\left\{\left.\bigl((q_\alpha)_{\alpha \in V_\comp(T_b)\setminus\{\alpha_\root\}},(r_\rho)_{\rho \in V_\inte(T_s)\setminus\{\rho_\root\}}\bigr)
\in
\{0,1\}^{\#V_\comp(T_b) + \#V_\inte(T_s) - 2}
\:\right|\:
\eqref{eq:Wn_model_coherences}\right\},
\\
\bigl((q_\alpha),(r_\rho)\bigr)
\leq
\bigl((q_\alpha'),(r_\rho')\bigr)
\quad
\iff
\qquad
q_\alpha \leq q_\alpha' \:\: \forall \alpha,
\quad
r_\rho \leq r_\rho' \:\: \forall \rho.
\nonumber
\end{gather}
Here \eqref{eq:Wn_model_coherences} is the following collection of coherences:
\begin{gather}
\prod_{\gamma \in [\alpha_1,\beta)} q_\gamma
=
\prod_{\gamma \in [\alpha_2,\beta)} q_\gamma
\qquad
\forall \: \alpha_1,\alpha_2 \in V_\comp^{\geq2}(T_b),
\beta \in V_\comp^1(T_b)
:
\pi(\alpha_1) = \pi(\alpha_2) = \pi(\beta),
\label{eq:Wn_model_coherences}
\\
r_\rho
=
\prod_{\gamma \in [\alpha,\beta_\alpha)} q_\gamma
\qquad
\forall \rho \in V_\inte(T_s)\setminus\{\rho_\root\},
\alpha \in V_\comp^{\geq2}(T_b)\cap\pi^{-1}\{\rho\},
\nonumber
\end{gather}
where in the second inequality we define $\beta_\alpha$ to be the first element of $V_\comp^{\geq2}(T_b)$ that the path from $\alpha$ to $\alpha_\root$ passes through.

\begin{definition}
For any stable tree-pair $2T$, define a map $2g_{2T}\colon 2p_{2T} \to W_\bn^\bC$ by sending an element $\bigl((q_\alpha),(r_\rho)\bigr) \in 2p_{2T}$ to the tree-pair $2T' = T_b' \to T_s' \in W_\bn^\bC$ defined like so:
\begin{itemize}
\item
We define $T_s'$ using the gluing map $g_{T_s}$, as $T_s' \coloneqq g_{T_s}\bigl((r_\rho)\bigr)$.

\medskip

\item
To define $T_b'$, we think of cutting $T_b$ at each zero in the tuple $(q_\alpha)$.
After doing so, we will have divided $T_b$ into a number of pieces, each of which can be thought of as a tree-pair $\wt{2T} \in W_\bm^\bC$ whose seam tree is the image in $T_s$ of this piece under $\pi$.
Replace each piece by the top tree-pair $T_b^\top \in W_\bm^\bC$.
\null\hfill$\triangle$
\end{itemize}
\end{definition}

\begin{example}
In the following figure, we illustrate the definition of the map $2g_{2T}$.
\begin{figure}[H]
\centering
\def\svgwidth{0.85\columnwidth}
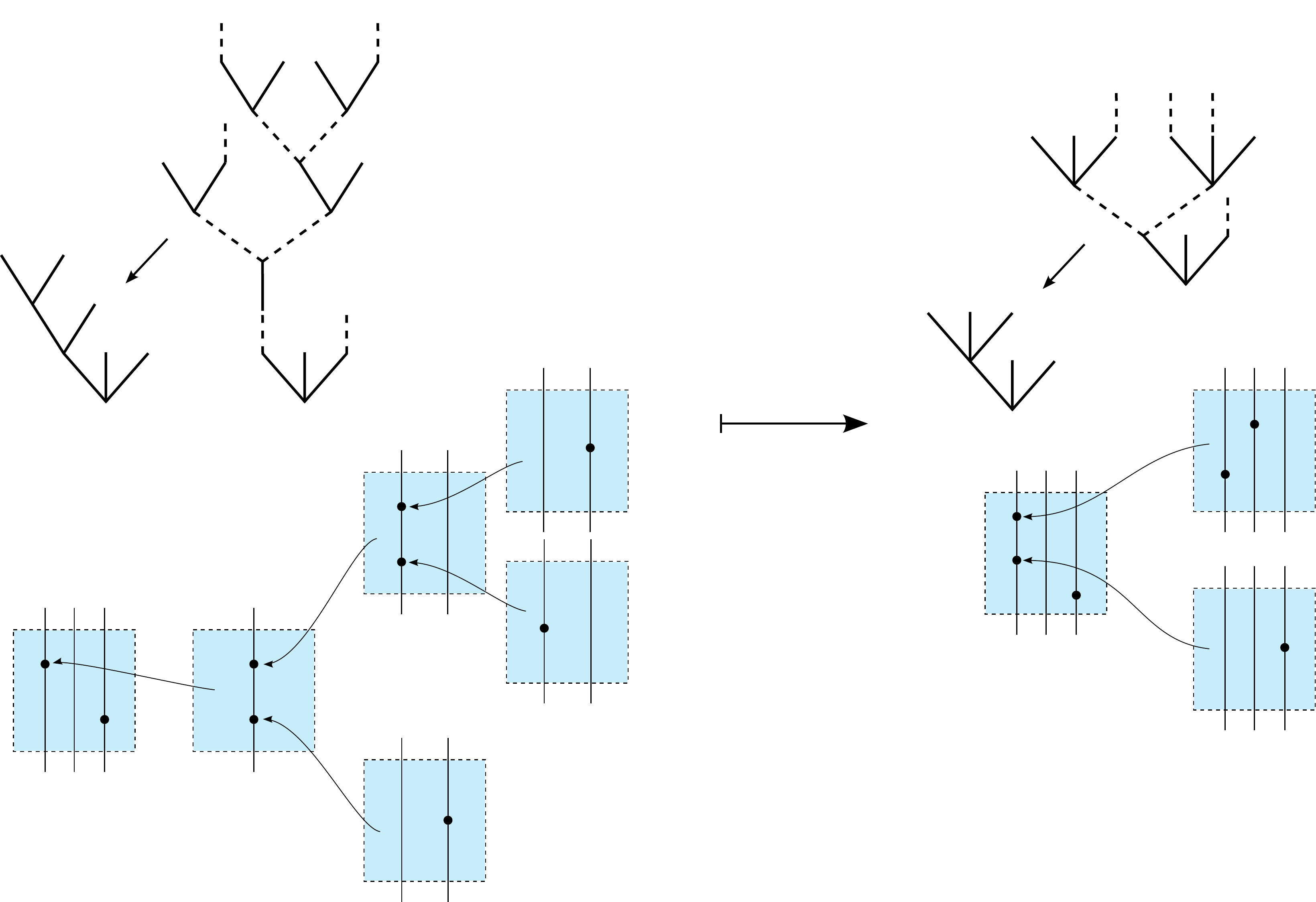
\end{figure}
\noindent
On the left is a tree-pair $2T \in W_{11101}^\bC$, together with numbers labeling the elements of $V_\comp(T_b) \setminus\{\alpha_\root\}$ and $V_\inte(T_s)\setminus\{\rho_\root\}$ that together comprise an element of $2p_{2T}$.
Below $2T$, we have depicted a typical element of $\ol{2M}_{11101}^\bC$ with combinatorial type $2T$.
On the right is the image of the indicated element of $2p_{2T}$ under $2g_{2T}$.
\null\hfill$\triangle$
\end{example}

\begin{lemma}
For any $2T \in W_\bn^\bC$, $2g_{2T}$ is an inclusion of posets, with image $2g_{2T}(2p_{2T}) = [2T,2T_\bn^\top]$.
\end{lemma}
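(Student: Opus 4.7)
The plan is to mimic the proof of Lemma~\ref{lem:gT_is_inclusion} in the tree-pair setting, with the extra layer coming from the coherences \eqref{eq:Wn_model_coherences} that couple the bubble-tree cutting with the seam-tree edge-contraction. First I would verify well-definedness of $2g_{2T}$: the first coherence in \eqref{eq:Wn_model_coherences} resolves the potential inconsistency at $V_\comp^1$-vertices (where the cut is not determined by local data alone, but must be compatible with the cuts at the nearby $V_\comp^{\geq 2}$ branchings), and the second coherence ensures that the seam tree $T_s' = g_{T_s}((r_\rho))$ is compatible with the cut-and-replace procedure on the bubble tree. Thus the output $T_b' \to T_s'$ is a genuine stable tree-pair in $W_\bn^\bC$.

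Monotonicity of $2g_{2T}$ is then direct from the bracketing description in Def.~\ref{def:Wn_br}: raising a $q_\alpha$ from $0$ to $1$ removes a 2-bracket from the bracketing of the image, and raising an $r_\rho$ from $0$ to $1$ removes a 1-bracket, both of which correspond to moving up in the poset. Injectivity is likewise direct: from $2T' = T_b' \to T_s'$ one recovers $r_\rho$ as the indicator of whether the edge incoming to $\rho$ in $T_s$ survived in $T_s'$, and $q_\alpha$ as the indicator of whether the cut at $\alpha$ was performed in the decomposition of $T_b$. In particular $2g_{2T}(\bzero,\bzero) = 2T$ and $2g_{2T}(\bone,\bone) = 2T_\bn^\top$, so together with monotonicity this shows $2g_{2T}(2p_{2T}) \subseteq [2T, 2T_\bn^\top]$.

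For the reverse containment, given $2T' \in [2T, 2T_\bn^\top]$ the inequality $2T \leq 2T'$ translates in the bracketing language into a specific subcollection of brackets being deleted; I would define the candidate $(q_\alpha),(r_\rho)$ to record exactly these deletions and then verify that applying $2g_{2T}$ to this tuple reproduces $2T'$. The main obstacle is checking that this candidate tuple lies in $2p_{2T}$, i.e.\ satisfies both coherences in \eqref{eq:Wn_model_coherences}. These ought to be automatic from the fact that $2T'$ is itself a valid tree-pair with a well-defined coherence map $f'\colon T_b' \to T_s'$: the first coherence reflects consistency of the image of the intermediate $V_\comp^1$-vertex $\beta$ under the two approaches to it from $\alpha_1$ and $\alpha_2$, while the second coherence reflects that a seam edge of $T_s$ is contracted in $T_s'$ precisely when the corresponding bubble-tree components are united in $T_b'$, i.e.\ exactly when all $q_\gamma$ on the path between $\alpha$ and its nearest $V_\comp^{\geq 2}$-ancestor $\beta_\alpha$ equal $1$. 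The routine part of the argument is then this combinatorial bookkeeping, which I expect to mirror the corresponding portion of \cite[Proof of Thm.\ 3.17]{b:2ass}.
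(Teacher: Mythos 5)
You take a genuinely different route from the paper, which establishes only the containment $[2T,2T_\bn^\top] \subseteq 2g_{2T}(2p_{2T})$ and does so by induction on codimension: it reduces to the case $d(2T')=d(2T)+1$, where $2T$ is obtained from $2T'$ by a single type-1, type-2, or type-3 move (in the terminology of \cite{b:2ass}), and for each move type writes down the preimage tuple explicitly, at which point the coherences \eqref{eq:Wn_model_coherences} are trivial to verify (one of the two sides is constant, or both sides are zero, etc.). Monotonicity, injectivity, and the reverse containment are dismissed in the paper as following from similar arguments. You instead work with an arbitrary $2T'\geq 2T$ directly, reading off the candidate $((q_\alpha),(r_\rho))$ from the bracket deletions and handling monotonicity and injectivity cleanly via the bracketing description. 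The trade-off is that your uniform construction concentrates all the real work into the verification that the candidate tuple satisfies both coherences in \eqref{eq:Wn_model_coherences}, which you only sketch ("ought to be automatic"); the paper's per-move induction is precisely designed to avoid that general verification. Your heuristics are sound — the second coherence is indeed the assertion that an edge of $T_s$ into $\rho$ survives in $T_s'$ exactly when no cut separates any $\alpha\in V_\comp^{\geq2}\cap\pi^{-1}\{\rho\}$ from its nearest $V_\comp^{\geq2}$-ancestor $\beta_\alpha$ — but to make the argument precise you would also need to spell out why $q_\alpha$ is unambiguously defined for the vertices $\alpha\in V_\comp^1(T_b)$ (which do not survive individually in $T_b'$ and are constrained only through the first coherence), and why the resulting tuple actually reproduces $2T'$ rather than some other element in the same fibre of the coarser invariants you used to define it.
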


\begin{proof}
We will prove the containment $[2T,2T_\bn^\top] \subset 2g_{2T}(2p_{2T})$; the opposite containment follows from a similar argument.
By induction, it suffices to show that for any $2T' > 2T$ with $d(2T') = d(2T) + 1$, $2T'$ is in the image of $2g_{2T}$.
As in \cite{b:2ass}, $2T$ is the result of performing a type-1, type-2, or type-3 move on $2T'$, so we prove the inductive hypothesis in these three cases.
\begin{itemize}
\item
Suppose that $2T$ is the result of performing a type-1 move on $2T'$, by creating a new component vertex $\alpha_0$ in $T_b$ via a manipulation of the following form:
\begin{figure}[H]
\centering
\def\svgwidth{0.45\columnwidth}
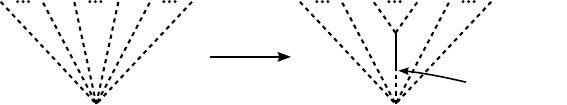
\end{figure}
\noindent
Set $q_\alpha$ to be 1 if $\alpha = \alpha_0$ and 0 otherwise, and $p_\rho$ to be 0.
Then $\bigl((q_\alpha),(p_\rho)\bigr)$ clearly satisfies \eqref{eq:Wn_model_coherences}, and $2T' = 2g_{2T}\bigl((q_\alpha),(p_\rho)\bigr)$.

\medskip

\item
Suppose that $2T$ is the result of performing a type-2 move on $2T'$.
This means that we create a new interior vertex $\sigma_0$ in $T_s$ via a manipulation of the following form:
\begin{figure}[H]
\centering
\def\svgwidth{0.4\columnwidth}
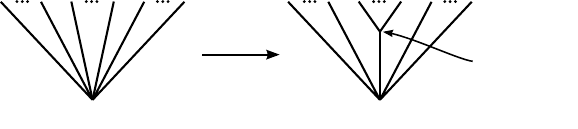
\end{figure}
\noindent
In addition, for every $\alpha_0 \in V_\comp^{\geq2}(T_b)$ lying over $\rho_0$, we create new component vertices $\alpha_1,\ldots,\alpha_k$ via a manipulation like this:
\begin{figure}[H]
\centering
\def\svgwidth{0.5\columnwidth}
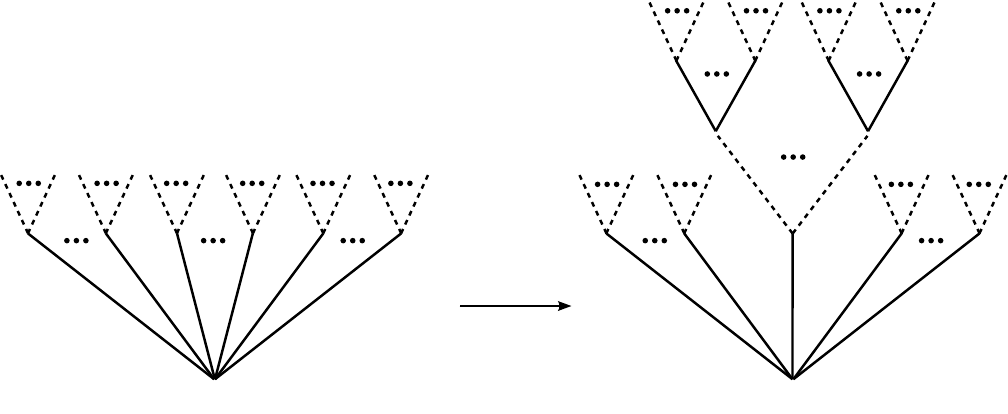
\end{figure}
\noindent
Define $\bigl((q_\alpha),(p_\rho)\bigr)$ by setting $q_\alpha$ to be 1 if $\alpha \in \{\alpha_1,\ldots,\alpha_k\}$ and 0 otherwise, and by setting $p_\rho$ to be 1 if $\rho = \sigma_0$ and 0 otherwise.
Then $\bigl((q_\alpha),(p_\rho)\bigr)$ satisfies \eqref{eq:Wn_model_coherences} --- e.g.\ both sides of the second equation in \eqref{eq:Wn_model_coherences} are 1 if $\rho = \sigma_0$, and both sides are 0 otherwise --- and $2T' = 2g_{2T}\bigl((q_\alpha),(p_\rho)\bigr)$.

\medskip

\item
Suppose that $2T$ is the result of performing a type-3 move on $2T'$, by creating new component vertices $\beta_1,\ldots,\beta_k$ via a manipulation of the following form:
\begin{figure}[H]
\centering
\def\svgwidth{0.4\columnwidth}
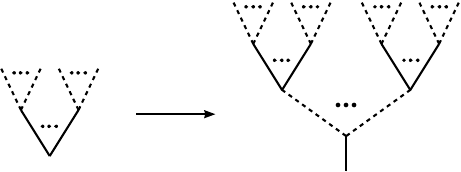
\end{figure}
\end{itemize}
Set $q_\alpha$ to be 1 if $\alpha \in \{\alpha_1,\ldots,\alpha_k\}$ and 0 otherwise, and $p_\rho$ to be 0.
Then $\bigl((q_\alpha),(p_\rho)\bigr)$ satisfies \eqref{eq:Wn_model_coherences} --- e.g.\ in the second line of those coherences, both sides are zero --- and $2T = 2g_{2T}\bigl((q_\alpha),(p_\rho)\bigr)$.
\end{proof}

\subsection{Construction of $\ol{2M}_\bn^\bC$ as a set, and of an atlas}
\label{ss:2Mn-bar_charts}

\begin{definition}
\label{def:SWC}
A \emph{stable plane-tree of type $\bn \in \bZ_{\geq0}^r\setminus\{\bzero\}$} is a triple
\begin{align}
\Bigl(2T=(T_b \sr{\pi}{\to} T_s), (\bx_\rho)_{\rho \in V_\inte(T_s)}, (\bz_\alpha)_{\alpha \in V_\comp(T_b)}\Bigr),
\end{align}
where:
\begin{itemize}
\item $2T$ is an element.

\item For $\rho \in V_\inte(T)$, $\bx_\rho$ is a tuple in $\bC^{\#\incom(\rho)} \setminus \Delta$, where $\Delta$ is the fat diagonal.

\item For $\alpha \in V_\comp(T_b)$, $\bz_\alpha \subset \bC^2 \setminus \Delta$ is a collection
\begin{gather}
\bz_\alpha = \left(z_{\alpha,ij} = (x_{\alpha,i},y_{\alpha,ij}) \:\left|\: {{\incom(\alpha) \eqqcolon (\beta_1,\ldots,\beta_{\#\incom(\alpha)}),}
\atop
{1\leq i\leq \#\incom(\alpha), \: 1 \leq j \leq \#\incom(\beta_i)}},
\eqref{eq:plane-tree_coherences}\right.\right),
\end{gather}
where \eqref{eq:plane-tree_coherences} is the following collection of coherences:
\begin{gather}
(x_{\alpha,1},\ldots,x_{\alpha,\#\incom(\alpha)}) = (x_{\pi(\alpha),1},\ldots,x_{\pi(\alpha),\#\incom(\pi(\alpha))})
\qquad
\forall \alpha \in V_\comp^{\geq2}(T_b).
\label{eq:plane-tree_coherences}
\end{gather}
\end{itemize}
We say that two stable plane-trees $\bigl(2T,(\bx_\rho),(\bz_\alpha)\bigr)$, $\bigl(2T',(\bx'_\rho),(\bz'_\alpha)\bigr)$ are \emph{isomorphic} if there is an isomorphism of stable tree-pairs $2f\colon 2T \to 2T'$ and functions $V_\inte(T_s) \to G_1\colon \rho \mapsto \phi_\rho$ and $V_\comp(T_b) \to G_2\colon \alpha \mapsto \psi_\alpha$ such that:
\begin{gather}
z'_{f_b(\alpha),ij} = \psi_\alpha(z_{\alpha,ij}) \:\:\forall\:\: \alpha \in V_\comp(T_b),
\qquad
x'_{f_s(\rho),i} = \phi_\rho(x_{\rho,i}) \:\:\forall\:\: \rho \in V_\inte(T_s), \\
p(\psi_\alpha) = \phi_{\pi(\alpha)} \:\:\forall\:\: \alpha \in V_\comp^{\geq 2}(T_b). \nonumber
\end{gather}

\noindent We denote the collection of stable plane-trees of type $\bn$ by $\SPT_\bn$, and we define the \emph{moduli space $\ol{2M}_\bn$ of stable plane-trees of type $\bn$} to be the set of isomorphism classes of stable plane-trees of this type.
For any stable plane-tree $2T$ of type $\bn$, define the corresponding \emph{strata} $\SPT_{\bn,2T} \subset \SPT_\bn$, $\ol{2M}_{\bn,2T} \subset \ol{2M}_\bn$ to be the set of all stable plane-trees (resp.\ isomorphism classes thereof) of the form $\bigl(2T,(\bx_\rho),(\bz_\alpha)\bigr)$.
We say that a stable plane-tree is \emph{smooth} if its underlying stable plane-tree $2T$ has the property that $V_\inte(T_s)$ and $V_\comp(T_b)$ each contain only one element; we denote a smooth stable plane-tree by the pair $(\bx,\bz) \in \bC^r\times\bC^{|\bn|+r}$ associated to the roots of $T_s$ resp.\ $T_b$.
\null\hfill$\triangle$
\end{definition}

Next, we will equip $\ol{2M}_\bn^\bC$ with the structure of an algebraic variety.
We will do so by constructing an atlas for $\ol{2M}_\bn^\bC$, in which the transition maps are algebraic morphisms.
Specifically, we will associate to each tree-pair $2T \in W_\bn^\bC$ with $d(2T)=0$ a chart $\varphi_{2T}\colon 2X_{2T} \to \ol{2M}_\bn^\bC$.
\footnote{
We could, as in \S\ref{s:Mr-bar}, construct an atlas for every $2T \in W_\bn^\bC$.
We impose the restriction $d(2T) = 0$ because cuts down on the proliferation of notation.
}
Our construction of $\varphi_{2T}$ and our verification that these charts satisfy the necessary properties will be straightforward, but rather involved.

\begin{definition}
If $2T \in W_\bn^\bC$ is a stable tree-pair, then a \emph{slice} $2S$ of $2T$ consists of the following data:
\begin{itemize}
\item
A slice $\bs$ of the seam tree $T_s \in K_r^\bC$.

\medskip

\item
A triple $(S_0, S_1, S_{(0,0)})$ of functions
\begin{align}
S_0, S_1
\colon
V_\comp^{\geq2}(T_b)
\to
V_\seam(T_b),
\qquad
S_{(0,0)}
\colon
V_\comp^{\geq2}(T_b)
\to
V_\comp(T_b) \cup V_\mk(T_b).
\end{align}

\medskip

\item
A pair $(S_{(0,0)}, S_{(0,1)})$ of functions
\begin{align}
S_{(0,0)}, S_{(0,1)}
\colon
V_\comp^1(T_b)
\to
V_\comp(T_b) \cup V_\mk(T_b).
\end{align}
\end{itemize}
We require that these data satisfy the following properties:
\begin{itemize}
\item
For any $\alpha \in V_\comp^{\geq2}(T_b)$, the following equalities hold:
\begin{align}
s_i(\pi(\alpha))
=
\pi(S_i(\alpha)),
\quad
i \in \{0,1\}.
\end{align}

\medskip

\item
For any $\alpha \in V_\comp^{\geq2}(T_b)$, $S_0(\alpha)$ and $S_1(\alpha)$ are distinct elements of $\incom(\alpha)$, and $S_{(0,0)}(\alpha)$ is an incoming vertex of $S_0(\alpha)$.
For any $\alpha \in V_\comp^1(T_b)$, $S_{(0,0)}(\alpha)$ and $S_{(0,1)}(\alpha)$ are distinct elements of $\incom(S_0(\alpha))$.
\end{itemize}
For a sliced tree $2T = (2T, 2S)$, we define a map
\begin{align}
&2C_{2T}
\colon
\prod_{\rho \in V_\inte(T_s)}
\bigl((\bC\setminus\{0,1\})^{\#\incom(\rho)-2}\setminus\Delta\bigr)
\times
\prod_{{\alpha \in V_\comp^1(T_b),}
\atop
{\incom(\alpha)\eqqcolon\{\beta\}}}
\bigl(\bC^{\#\incom(\beta)}\setminus\Delta\bigr)
\times
\\
&\hspace{0.5in}
\times
\prod_{\alpha \in V_\comp^{\geq2}(T_b)}
\biggl(
\bigl((\bC\setminus\{0\})^{\#\incom(S_0(\alpha))-1}\setminus\Delta\bigr)
\times
\prod_{\beta \in \incom(\alpha)\setminus\{S_0(\alpha)\}}
\bigl(\bC^{\#\incom(\beta)}\setminus\Delta\bigr)
\biggr)
\to
\ol{2M}_\bn^\bC.
\nonumber
\end{align}
This map is defined by analogy with the map $C_T$ from Definition~\ref{def:slice_and_CT}.
In this case, for $\alpha \in V_\comp^{\geq2}(T_b)$, $S_0(\alpha)$ and $S_1(\alpha)$ correspond to the lines that are fixed at positions 0 and 1, and $S_{(0,0)}(\alpha)$ is the point on the line corresponding to $S_0(\alpha)$ that is fixed at position $(0,0)$; for $\alpha \in V_\comp^1(T_b)$, $S_{(0,0)}(\alpha)$ and $S_{(0,1)}(\alpha)$ are the points on the (only) line which are fixed at positions $(0,0)$ and $(0,1)$.
Then $2C_{2T}$ restricts to a bijection from its domain to $\ol{2M}_{\bn,2T}^\bC$.
\null\hfill$\triangle$
\end{definition}

For any sliced $2T \in W_\bn^\bC$ with $d(2T) = 0$, we aim to define its associated local model $2X_{2T}$.
We begin by defining $\wt{2X}_{2T}$, which will contain $2X_{2T}$ as a Zariski-open subset:
\begin{align}
\label{eq:2X_2T-tilde}
&\wt{2X}_{2T}
\coloneqq
\left\{\left.\bigl(
(b_\rho)_{\rho \in V_\inte(T_s)\setminus\{\rho_\root\}},
(a_\alpha)_{\alpha \in V_\comp(T_b)\setminus\{\alpha_\root\}}
\bigr)
\in
\bC^{\#V_\comp(T_b) + \#V_\inte(T_s) - 2}
\:\right|\:
\eqref{eq:2X_2T-tilde_coherences}\right\}.
\end{align}
Here \eqref{eq:2X_2T-tilde_coherences} is the following collection of coherences:
\begin{gather}
\prod_{\gamma \in [\alpha_1,\beta)} a_\gamma
=
\prod_{\gamma \in [\alpha_2,\beta)} a_\gamma
\qquad
\forall \: \alpha_1,\alpha_2 \in V_\comp^{\geq2}(T_b),
\beta \in V_\comp^1(T_b)
:
\pi(\alpha_1) = \pi(\alpha_2) = \pi(\beta),
\label{eq:2X_2T-tilde_coherences}
\\
b_\rho
=
\prod_{\gamma \in [\alpha,\beta_\alpha)} a_\gamma
\qquad
\forall \rho \in V_\inte(T_s)\setminus\{\rho_\root\},
\alpha \in V_\comp^{\geq2}(T_b)\cap\pi^{-1}\{\rho\},
\nonumber
\end{gather}
where in the second inequality we define $\beta_\alpha$ to be the first element of $V_\comp^{\geq2}(T_b)$ that the path from $\alpha$ to $\alpha_\root$ passes through.
\null\hfill$\triangle$

\begin{example}
\label{ex:2Mn_model}
Consider the tree-pairs $2T^1, 2T^2$ depicted in the following figure:
\begin{figure}[H]
\centering
\def\svgwidth{0.8\columnwidth}
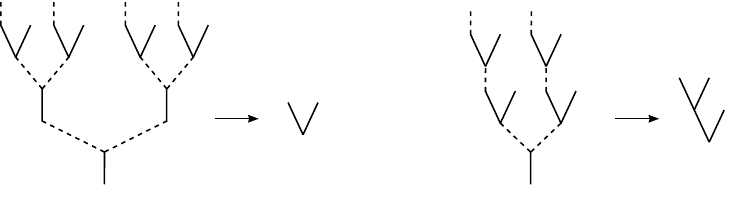
\end{figure}
\noindent
We have labeled the interior dashed edges of $T_b$ and the interior edges of the seam trees with their associated variables.
\begin{itemize}
\item
The local model $\wt{2X}_{2T}$ associated to $2T^{(1)}$ is
\begin{align}
\left\{(a,b,c,d,e,f) \in \bC^6
\:\left|\:
{
{c=d,e=f}
\atop
{ac=ad=be=bf}
}
\right.\right\},
\end{align}
which we can identify with the quadric cone
\begin{align}
\bigl\{(a,b,c,e) \in \bC^4
\:|\:
ac=be\bigr\}.
\end{align}

\item
The local model associated to $2T^{(2)}$ is
\begin{align}
\bigl\{(a,b,c,d,e) \in \bC^5
\:|\:
a=b,c=e=d
\bigr\},
\end{align}
which we can identify with $\bC^2 \ni (a,e)$.
\null\hfill$\triangle$
\end{itemize}
\end{example}

\begin{definition}
\label{def:2Mn_local_model}
Fix $2C = \bigl(2T,\bigl((\bx_\rho),(\bz_\alpha)\bigr)\bigr)$ a stable plane tree, and distinct vertices $\alpha \in V_\comp(T_b)$, $\beta \in (T_b)_\alpha \cap (V_\comp(T_b) \cup V_\mk(T_b))$.
We define a polynomial $2p_{\alpha\beta}^{2C}$ in variables $a_\delta$, $\delta \in V_\comp(T_b) \setminus \{a_\root\}$ like so:
\begin{align}
2p_{\alpha\beta}^{2C}(\ba)
\coloneqq
\sum_{\gamma \in [\alpha,\beta[}
\Bigl(z_{\gamma\beta}
\prod_{\delta \in ]\alpha,\gamma]} a_\delta\Bigr).
\end{align}
This allows us to define, for any sliced $2T \in W_\bn^\bC$ with $d(2T) = 0$, the associated local model $2X_{2T}$:
\begin{align}
\label{eq:Wn_local_model_def}
2X_{2T}
\coloneqq
\wt{2X}_{2T}
\setminus
\Bigl(
\bigcup_{i < j}
Z(q_{ij})
\cup
\bigcup_k
\bigcup_{i < j}
Z(Q_{ij}^k)
\Bigr)
\end{align}
Here we have defined $q_{ij}$ as in \S\ref{s:Mr-bar}, and, for any $i, j, k$, we have defined $Q_{ij}^k$ to be the largest polynomial factor of $p_{\rho_\root\mu_{ki}}^{2C_{2T}()} - p_{\rho_\root\mu_{ki}}^{2C_{2T}()}$ not divisible by a monomial in $\ba$ and $\bb$.
The local model $2X_{2T}$ is stratified as $2X_{2T} = \bigcup_{2T'\geq 2T} 2X_{2T,2T'}$.
\null\hfill$\triangle$
\end{definition}

Analogously to Def.~\ref{def:Mr_charts}, we can now define the charts for $\ol{2M}_\bn^\bC$.

\begin{definition}
\label{def:2Mn_chart}
Fix a sliced tree-pair $2T \in W_\bn^\bC$ with $d(2T) = 0$, equipped with a slice $2S$.
We define $2\varphi_{2T}\colon 2X_{2T} \to \ol{2M}_\bn^\bC$ like so:
\begin{gather}
2\varphi_{2T}(
\bb,
\ba
)
\coloneqq
\Bigl(
2g_{2T}\bigl(2\pi_{2T}(\bb,\ba)\bigr),
\bigl(
\bigl(\bx_\rho^{\varphi_T(\bb)}\bigr),
\bigl(\bz_\alpha^{2\varphi_{2T}(\bb,\ba)}\Bigr)
\bigr)
\bigr),
\quad
\bz_\alpha^{2\varphi_{2T}(\bb,\ba)}
\coloneqq
\Bigl(2p_{\alpha\beta}^{2C_{2T}()}(\ba)\Bigr).
\end{gather}
$\null\hfill\triangle$
\end{definition}

\noindent
In Lemma~\ref{lem:X_T_strats}, we showed that $X_T$ is exactly defined so that, for every $\bigl((\bx_\rho),\bb\bigr)$, the glued map $\varphi_T\bigl((\bx_\rho),\bb\bigr)$ has the property that on each copy of $\bC$, no two special points coincide.
An exactly analogous result is also true for $2X_{2T}$, and we omit the proof.

\begin{example}
In this example, we will work out two of the charts on $\ol{2M}_{21}^\bC$ and the transition map between them.
Specifically, we will consider the charts $\phi, \psi$ associated to the following two vertices of $\ol{2M}_{21}^\bC$:
\begin{figure}[H]
\centering
\def\svgwidth{1.0\columnwidth}
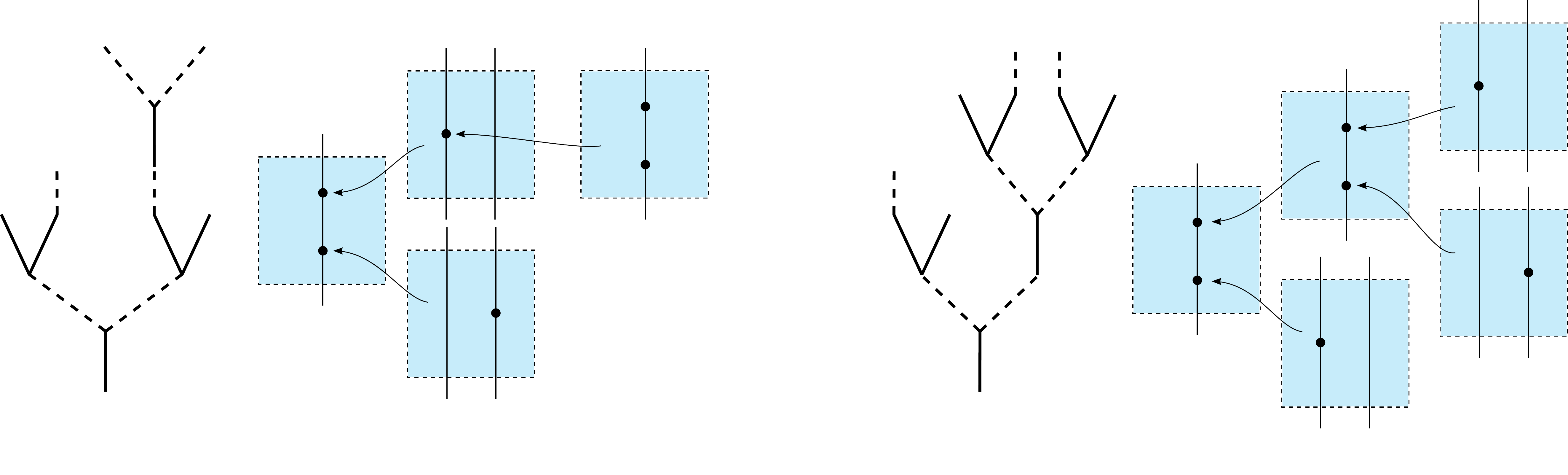
\end{figure}
\noindent
The local models are $X_\phi = \bC^2 \ni (a,b)$ and $X_\psi = \{(a,b,c) \in \bC^3 \:|\: a = bc\} \setminus \{(0,0,-1)\}$.

First, we depict the image of $(a,b) \in X_\phi$ under $\phi$:
\begin{figure}[H]
\centering
\def\svgwidth{1.0\columnwidth}
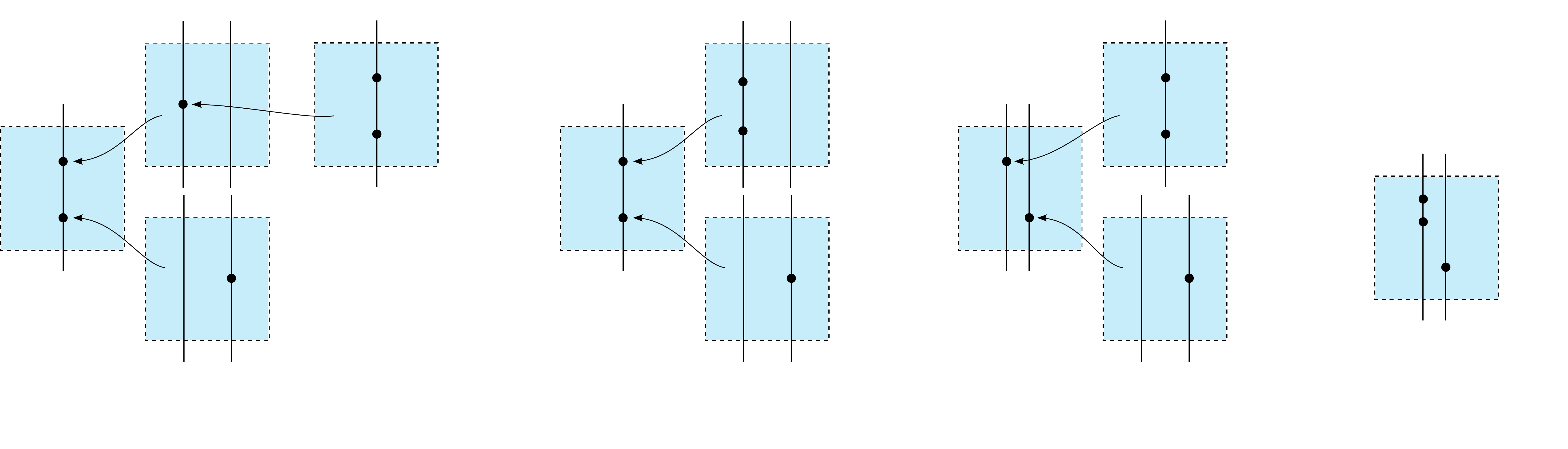
\end{figure}

\noindent
Next, we depict the image of $(a,b,c) \in X_\psi$ under $\psi$:
\begin{figure}[H]
\centering
\def\svgwidth{1.0\columnwidth}
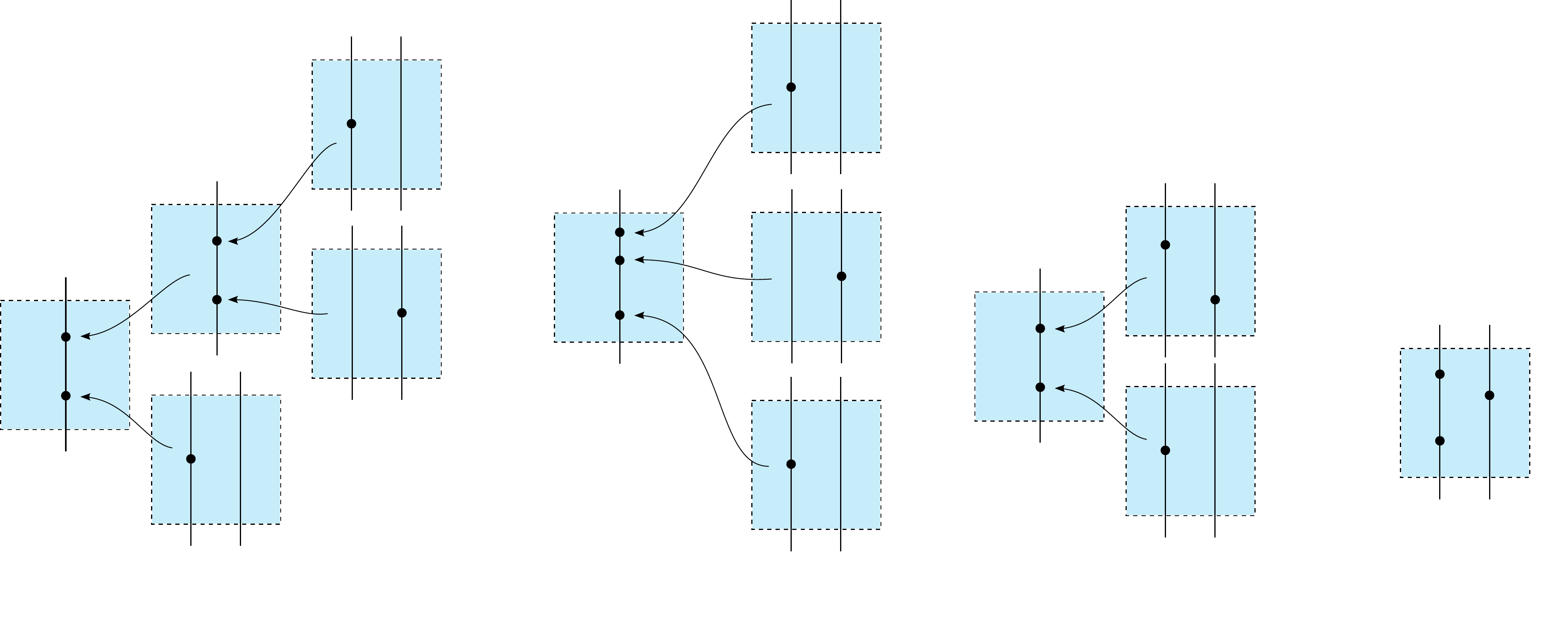
\end{figure}

Finally, we will compute $\psi^{-1}\circ\phi$.
The domain $\phi^{-1}\bigl(\phi(X_\phi) \cap \psi(X_\psi)\bigr)$ of this transition map is $(\bC\setminus\{0\})^2 \setminus \{ab = -1\}$, and in the following figure we compute it:
\begin{figure}[H]
\centering
\def\svgwidth{1.0\columnwidth}
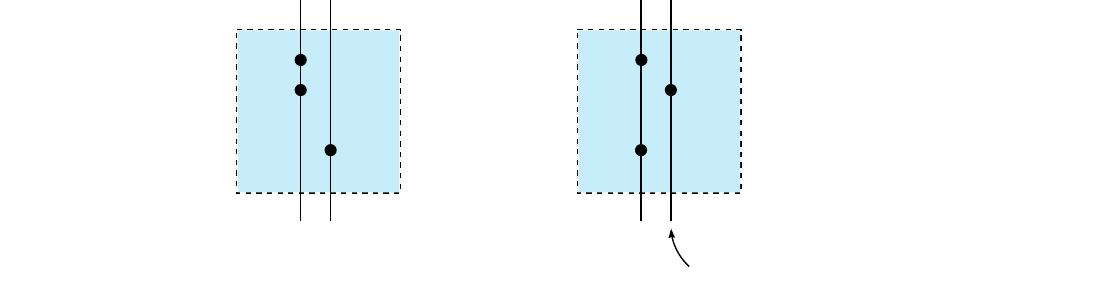
\null\hfill$\triangle$
\end{figure}
\end{example}

The following two lemmas can be proven in a fashion analogous to Lemmas~\ref{lem:image_of_Kr_chart} and \ref{lem:Mr_transitions_algebraic}.

\begin{lemma}
\label{lem:image_of_Wn_chart}
For any sliced stable tree-pair $2T$ with $d(2T) = 0$, $2\varphi_{2T}$ is injective, with image
\begin{align}
2\varphi_{2T}(2X_{2T}) = \bigcup_{2T' \geq 2T} \ol{2M}_{\bn,2T'}.
\end{align}
\end{lemma}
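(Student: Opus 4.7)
The plan is to mirror the three-step structure of the proof of Lemma~\ref{lem:image_of_Kr_chart}, adapted from seam trees to tree-pairs. Since $d(2T) = 0$ by hypothesis, the analogue of Step 3 (the induction on dimension) is unnecessary; only the analogues of Steps 1 and 2 are required.

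For the analogue of Step 1, I would show that $2\varphi_{2T}$ restricts to a bijection from the open locus
\[
2X_{2T}^{\neq 0} \coloneqq 2X_{2T} \cap (\bC\setminus\{0\})^{\#V_\comp(T_b) + \#V_\inte(T_s) - 2}
\]
onto the top stratum $2M_\bn^\bC$. I would push the slice $2S$ forward to a slice $\wt{2S}$ of $2T_\bn^\top$ and analyze the composition $2C_{(2T_\bn^\top, \wt{2S})}^{-1} \circ 2\varphi_{2T}$. The seam-tree factor of this composition coincides with the map already inverted in Step 1 of Lemma~\ref{lem:image_of_Kr_chart}, so its bijectivity is known. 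The bubble-tree factor takes the form
\[
\biggl(\sum_{\gamma \in [\alpha_\root, \mu_{ij}[} z_{\gamma \mu_{ij}} \prod_{\delta \in ]\alpha_\root, \gamma]} a_\delta\biggr)_{i,j},
\]
and I would invert it by induction on $|\bn|$, peeling off one marked leaf at a time. At each step, the variable $a_{\alpha_0}$ associated to the outgoing component vertex of the removed leaf appears in exactly one monomial, so it can be solved linearly, mirroring equation~\eqref{eq:solved_for_last_var_inj_of_Mr}. Nonvanishing of the resulting $a_{\alpha_0}$ follows from the distinctness of the special points on the corresponding bubble, and compatibility with the coherences \eqref{eq:2X_2T-tilde_coherences} holds because the $b_\rho$'s have already been determined by the seam-tree inversion.

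For the analogue of Step 2, given $2T' \geq 2T$ with $2T' = 2g_{2T}\bigl((q_\alpha),(r_\rho)\bigr)$, I would decompose $2T$ into a collection of sliced sub-tree-pairs $2T(v)$ indexed by the vertices $v$ of $2T'$, obtained by cutting $2T$ at the zero positions in $(q_\alpha)$ and $(r_\rho)$. This induces a product decomposition $2X_{2T, 2T'} = \prod_v 2X_{2T(v), 2T(v)^\top}$, under which $2\varphi_{2T}$ factors as a product of maps $2\varphi_{2T(v)}$. Applying Step 1 to each factor yields the desired bijection onto $\ol{2M}_{\bn, 2T'}^\bC$. The main obstacle is the combinatorial bookkeeping for this product decomposition: one must verify that the coherence relations \eqref{eq:2X_2T-tilde_coherences} decompose cleanly under the cutting, meaning each coherence either lies entirely within a single sub-tree-pair's local model or becomes trivially satisfied on $2X_{2T, 2T'}$ because the gluing parameters $a_\gamma$ along the relevant path are identically $1$. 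This requires a careful case analysis based on the placement of the paths $[\alpha_1, \beta)$, $[\alpha_2, \beta)$ relative to the edges contracted by $2g_{2T}$; once completed, the remainder of the argument is a direct transcription of the proof of Lemma~\ref{lem:image_of_Kr_chart}.
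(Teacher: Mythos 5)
Since the paper gives no written-out proof of this lemma---it simply states that it and Lemma~\ref{lem:2Mn_transitions_algebraic} ``can be proven in a fashion analogous to Lemmas~\ref{lem:image_of_Kr_chart} and~\ref{lem:Mr_transitions_algebraic}''---your proposal is not being measured against a detailed argument, but it does flesh out the intended analogy in the right way: restrict to the analogues of Steps 1 and 2 (Step 3 being vacuous since $d(2T)=0$), invert the top-stratum gluing map by peeling off leaves as in \eqref{eq:solved_for_last_var_inj_of_Mr}, and decompose $2X_{2T,2T'}$ as a product of sub-tree-pair local models. You also correctly flag the point at which this differs from the seam-tree case: the coherences \eqref{eq:2X_2T-tilde_coherences} must be shown to be compatible with the inversion in Step 1 and to decompose cleanly under cutting in Step 2.

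There is, however, a concrete error in your Step 2. You assert that a coherence crossing a cut ``becomes trivially satisfied on $2X_{2T,2T'}$ because the gluing parameters $a_\gamma$ along the relevant path are identically $1$.'' No coordinate $a_\gamma$ is ever forced to equal $1$ on a stratum. On $2X_{2T,2T'}$, each $a_\gamma$ is either identically zero (when the component vertex $\gamma$ survives into $T_b'$, i.e.\ $q_\gamma = 0$) or nonzero but otherwise free (when $\gamma$ is contracted, i.e.\ $q_\gamma = 1$). If every $a_\gamma$ along both paths were nonzero, then $\prod_{\gamma\in[\alpha_1,\beta)} a_\gamma = \prod_{\gamma\in[\alpha_2,\beta)} a_\gamma$ would remain a genuine constraint, not an identity---this is precisely the case where the coherence is a defining equation of a sub-tree-pair's local model. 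The mechanism by which a coherence degenerates is the opposite one: the constraint \eqref{eq:Wn_model_coherences} built into $2p_{2T}$ guarantees that whenever one of the two paths $[\alpha_1,\beta)$, $[\alpha_2,\beta)$ contains a $\gamma$ with $q_\gamma = 0$ (hence $a_\gamma = 0$ on the stratum), so does the other, and the coherence reads $0 = 0$. The dichotomy you want is therefore: either both paths lie within a single piece $2T(v)$ (so the coherence is one of the defining coherences of $\wt{2X}_{2T(v)}$), or both paths meet a cut vertex and the coherence degenerates to $0=0$. With that correction, your plan for Step 2 is sound.
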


\begin{lemma}
\label{lem:2Mn_transitions_algebraic}
For any sliced tree-pairs $(2T_1,2\bs^1), (2T_2,2\bs^2) \in \ol{2M}_\bn^\bC$ with $d(2T_1) = d(2T_2) = 0$, the transition map $2\varphi_{2T_2}^{-1} \circ 2\varphi_{2T_1}\colon 2X_{2T_1,2T_\bn^\top} \to 2X_{2T_2,2T_\bn^\top}$ is a morphism.
\end{lemma}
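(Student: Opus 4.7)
The strategy is to mimic the proof of Lemma~\ref{lem:Mr_transitions_algebraic}, upgrading it from the seam-tree-only setting to the full plane-tree setting. My plan is to factor the transition map through the canonical slicing associated to the top tree-pair. Let $\wt{2\bs}^2$ denote the pushforward of the slice $2\bs^2$ to $2T_\bn^\top$, and write
\begin{align*}
2\varphi_{2T_2}^{-1} \circ 2\varphi_{2T_1}
=
\bigl(2\varphi_{2T_2}^{-1} \circ 2C_{(2T_\bn^\top,\wt{2\bs}^2)}\bigr)
\circ
\bigl(2C_{(2T_\bn^\top,\wt{2\bs}^2)}^{-1} \circ 2\varphi_{2T_1}\bigr).
\end{align*}
It suffices to show that each factor is a morphism on the relevant domain.

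For the right factor, by Definition~\ref{def:2Mn_chart}, on the top stratum $2\varphi_{2T_1}(\bb,\ba)$ is represented by a smooth plane-tree whose $x$-coordinates are the polynomials $p_{\rho_\root\lambda_i}^{C_{T_1^s}(\bx_\rho)}(\bb)$ and whose full $(x,y)$-coordinates are recorded by the polynomials $2p_{\alpha_\root\mu_{ij}}^{2C_{2T_1}(\cdots)}(\ba)$. Passing into the slice coordinates for $\wt{2\bs}^2$ is accomplished by applying the $G_1$-action fiberwise on seams and the $G_2$-action fiberwise on components, which, as in Step~1 of Lemma~\ref{lem:Mr_transitions_algebraic}, yields ratios of differences of these polynomials. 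On $2X_{2T_1,2T_\bn^\top}$ the denominators never vanish (exactly by the defining open condition of $2X_{2T_1}$), so the result is rational in $(\bb,\ba)$ and hence a morphism.

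For the left factor, I would run the $2$-associahedral analogue of the inversion argument used in Step~1 of Lemma~\ref{lem:image_of_Kr_chart} (this is also the content of Lemma~\ref{lem:image_of_Wn_chart}). Concretely, given a tuple of slice coordinates for $(2T_\bn^\top,\wt{2\bs}^2)$, one recovers the seam-tree parameters $\bb$ by applying the inversion for $T_2^s$ from the proof of Lemma~\ref{lem:image_of_Kr_chart}, then recovers the bubble-tree parameters $\ba$ by an entirely parallel induction on the depth of $T_2^b$, in which each new variable $a_{\alpha_0}$ is isolated from a single leaf equation just as in \eqref{eq:solved_for_last_var_inj_of_Mr}. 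Each step produces a rational function, so this factor is also a morphism.

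The main obstacle I anticipate is verifying that the output of the inversion in the second factor lies in $2X_{2T_2}$, i.e.\ satisfies the coherence system \eqref{eq:2X_2T-tilde_coherences} linking the $a_\gamma$'s to each other and to the $b_\rho$'s. This compatibility is forced by the geometry: any point in the image of $2C_{(2T_\bn^\top,\wt{2\bs}^2)}$ is an honest isomorphism class of plane-trees, so its preimage under $2\varphi_{2T_2}$ must automatically satisfy \eqref{eq:2X_2T-tilde_coherences}. The concrete rational formulas produced by the inductive inversion merely exhibit this algebraic fact; the careful check will consist in matching the combinatorics of paths $[\alpha,\beta_\alpha)$ in $T_2^b$ with the analogous paths in $T_1^b$ used to build $2\varphi_{2T_1}$. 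Once both factors are morphisms, composition yields the lemma.
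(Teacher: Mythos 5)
Your proposal follows exactly the route the paper has in mind: the paper gives no details for this lemma, stating only that it is ``proven in a fashion analogous to Lemmas~\ref{lem:image_of_Kr_chart} and \ref{lem:Mr_transitions_algebraic},'' and your factorization through $2C_{(2T_\bn^\top,\wt{2\bs}^2)}$, the rational-function computation for one factor, and the inductive inversion for the other are precisely that analogy carried out. Your flagging of the coherence system \eqref{eq:2X_2T-tilde_coherences} as the new wrinkle not present in the $\ol M_r^\bC$ case is also on target.
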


\begin{lemma}
\label{lem:2Mn_variety}
There is a unique topology on $\ol{2M}_\bn^\bC$ such that $\bigl(\ol{2M}_\bn^\bC, (2\varphi_{2T})\bigr)$ is a variety, where we allow $2T$ to range over all tree-pairs $2T \in W_\bn^bC$ equipped with a slice and with $d(2T) = 0$.
\end{lemma}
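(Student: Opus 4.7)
My plan is to mirror step-by-step the proof of Lemma~\ref{lem:Mr_variety}. First, endow $\ol{2M}_\bn^\bC$ with the topology in which $U$ is open iff $2\varphi_{2T}^{-1}(U)$ is Zariski-open in $2X_{2T}$ for every sliced $2T \in W_\bn^\bC$ with $d(2T) = 0$. Lemma~\ref{lem:2Mn_transitions_algebraic} makes each transition map a morphism on the overlaps of open strata, and (as in the text following Example~\ref{ex:Mr_morphism}) these extend to morphisms on all stratum overlaps, so each $2\varphi_{2T}$ is a homeomorphism onto its image; this is manifestly the unique such topology. The charts cover $\ol{2M}_\bn^\bC$: any point $p$ lies in some stratum $\ol{2M}_{\bn,2T'}^\bC$, and since $W_\bn^\bC$ is a finite poset whose $d(2T) = 0$ elements are minimal, we may pick $2T \leq 2T'$ with $d(2T) = 0$, so that $p \in 2\varphi_{2T}(2X_{2T})$ by Lemma~\ref{lem:image_of_Wn_chart}.

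The substantive step is showing that the top stratum $2M_\bn^\bC \coloneqq \ol{2M}_{\bn,2T_\bn^\top}^\bC$ is dense and irreducible, from which the irreducibility of $\ol{2M}_\bn^\bC$ follows verbatim as in Step~3 of Lemma~\ref{lem:Mr_variety}. Irreducibility of $2M_\bn^\bC$ is immediate upon identifying it with the geometric quotient by the reparametrization group $G_2$ of the irreducible open subvariety of $\bC^r \times \bC^{r+|\bn|}$ parametrizing configurations of $r$ distinct vertical lines with distinct marked points. For density, fix a chart $2\varphi_{2T}$ with $d(2T) = 0$ and any point $x \in 2X_{2T,2T'}$ with $2T' > 2T$; I would construct a polynomial curve $\gamma\colon [0,\eps) \to 2X_{2T}$ with $\gamma(0) = x$ and $\gamma(t) \in 2X_{2T,2T_\bn^\top}$ for $t > 0$, by replacing each vanishing $a_\alpha$ (resp.\ $b_\rho$) with $t^{m_\alpha}$ (resp.\ $t^{n_\rho}$) for exponents chosen so that the coherences \eqref{eq:2X_2T-tilde_coherences} remain satisfied.

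The main obstacle is selecting these exponents consistently. Since \eqref{eq:2X_2T-tilde_coherences} expresses equality between two monomial products in the $a$-variables running over two paths through $T_b$, setting the exponent $m_\alpha$ for each vanishing $a_\alpha$ equal to $1$ reduces both sides of any such coherence to $t^{(\text{number of vanishing entries in path})}$, and since the combinatorial structure of the strata $2X_{2T,2T'}$ ensures these counts agree across the two paths, the identity is preserved; the $b$-type coherence then determines $n_\rho$ uniquely. That the curve $\gamma(t)$ satisfies the inequations $q_{ij}, Q_{ij}^k \neq 0$ for small $t > 0$ is automatic by continuity, as these hold at $\gamma(0) = x$.
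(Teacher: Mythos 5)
Your overall strategy is sound and matches the structure of the paper's Lemma~\ref{lem:Mr_variety}; the paper's own proof of Lemma~\ref{lem:2Mn_variety} is literally just the sentence that the proof is similar, so your effort to spell out the details is worthwhile. The topology construction, the chart-coverage observation, and the irreducibility of the open stratum $2M_\bn^\bC$ are all fine.

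The gap is in the density argument via the polynomial curve. When you replace each vanishing $a_\alpha$ by $t$ and leave the nonvanishing coordinates at their original values, a coherence of the form $\prod_{\gamma\in[\alpha_1,\beta)}a_\gamma=\prod_{\gamma\in[\alpha_2,\beta)}a_\gamma$ becomes $t^{k_1}P_1=t^{k_2}P_2$, where $k_i$ is the number of vanishing $a_\gamma$ on the $i$-th path and $P_i$ is the product of the \emph{nonvanishing} $a_\gamma(x)$ along the $i$-th path. You assert that both sides reduce to a pure power of $t$ and that the exponents agree, but this ignores $P_1,P_2$, and the binary compatibility condition \eqref{eq:Wn_model_coherences} only guarantees that $k_1>0$ iff $k_2>0$ — not that $k_1=k_2$, and certainly not that $P_1=P_2$. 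Concretely, in the local model of Example~\ref{ex:2Mn_model}, the relation is $ac=be$; on the stratum where $a=e=0$ and $b,c\neq0$, the coordinates $b$ and $c$ are unconstrained, so the curve $a=e=t$ gives $tc=bt$, which forces $c=b$ and hence immediately leaves $\wt{2X}_{2T}$ for a generic point of the stratum. So the curve as constructed does not stay in the local model.

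The clean way to obtain density — and what the paper is implicitly relying on, since for $\ol M_r^\bC$ the domain $X_T$ was an open subset of affine space and hence trivially irreducible, whereas $\wt{2X}_{2T}$ is a proper closed subvariety — is to first establish irreducibility of $\wt{2X}_{2T}$. From \eqref{eq:2X_2T_as_monoid_algebra}, $\wt{2X}_{2T}=\Spec\bC[\bZ_{\geq0}^N/L_{2T}]$, and Step~1 of the proof of Lemma~\ref{lem:2X_2T_reduced_normal} shows that $L_{2T}$ is saturated, so $\bZ^N/L_{2T}$ is torsion-free and $\bZ_{\geq0}^N/L_{2T}$ is a cancellative, torsion-free monoid. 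Its monoid algebra is therefore an integral domain, so $\wt{2X}_{2T}$ is irreducible, and the nonvanishing locus $2X_{2T}^{\neq0}$ (a nonempty Zariski-open subset) is automatically dense. Note that this irreducibility is also needed just for the chart domains to be affine \emph{varieties} in the sense of Osserman's definition, so it is not an optional extra: the dependence on Lemma~\ref{lem:2X_2T_reduced_normal} (or at least on the saturation argument within it) cannot be avoided, and a curve argument that bypassed it would, if correct, be proving a weaker thing than is actually required.
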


\begin{proof}
The proof of this lemma is very similar to the proof of Lemma~\ref{lem:Mr_variety}, using Lemmas \ref{lem:image_of_Wn_chart} and \ref{lem:2Mn_transitions_algebraic} in place of Lemmas \ref{lem:image_of_Kr_chart} and \ref{lem:Mr_transitions_algebraic}.
\end{proof}

Finally, a proof similar to that of Lemma~\ref{lem:Mr_sep_proper} establishes the following lemma.

\begin{lemma}
\label{lem:2Mn_sep_proper}
$\ol{2M}_\bn^\bC$ is separated and proper.
\end{lemma}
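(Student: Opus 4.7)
The plan is to follow the template of Lemma~\ref{lem:Mr_sep_proper} almost verbatim. First I would equip $\ol{2M}_\bn^\bC$ with a Gromov topology $\ol{2M}_{\bn,\Gr}^\bC$, defined directly in analogy with the real case \cite[Def.\ 2.3]{b:realization}, and the aim is then to show this topology coincides with the analytic topology $\ol{2M}_{\bn,\an}^\bC$ arising from the atlas $(2\varphi_{2T})$. That $\ol{2M}_{\bn,\Gr}^\bC$ is Hausdorff and compact can be established by essentially mechanical adaptations of \cite[Thms.\ 2.6 and 2.10]{b:realization} to the complex setting; once the two topologies are identified, Corollaries 3.3 and 3.5 of \cite{osserman:complex} yield separatedness and properness.

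The technical core, mirroring Step~1 of Lemma~\ref{lem:Mr_sep_proper}, is the implication \emph{analytic convergence $\Rightarrow$ Gromov convergence}. Fix $p^\nu \to p^\infty$ in $\ol{2M}_{\bn,\an}^\bC$. After passing to a subsequence we may assume $(p^\nu) \subset \ol{2M}_{\bn,2T'}^\bC$ for a single $2T' \geq 2T$, where $2T$ is the combinatorial type of $p^\infty$, and pulling back via $2\varphi_{2T}$ yields coordinates $\bigl((\bx_\rho^\nu),(\bz_\alpha^\nu),\bb^\nu,\ba^\nu\bigr)$ with $(\bb^\nu,\ba^\nu) \to \bzero$. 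I would then produce rescalings $\phi_\rho^\nu \in G_1$ for $\rho \in V_\inte(T_s)$ and $\psi_\alpha^\nu \in G_2$ for $\alpha \in V_\comp(T_b)$ witnessing Gromov convergence. The $\phi_\rho^\nu$ are given exactly by the formula of Step~1 of Lemma~\ref{lem:Mr_sep_proper}. Each $\psi_\alpha^\nu$ has horizontal component dictated by the compatibility $p(\psi_\alpha^\nu) = \phi_{\pi(\alpha)}^\nu$, while its vertical translation and dilation are given by the natural two-dimensional analogue
\begin{align*}
(\psi_\alpha^\nu)^{-1}(z) \coloneqq \frac{z - \sum_{\gamma \in [\alpha_\root,\alpha[}\bigl(z_{\gamma\alpha}^\nu\prod_{\delta \in ]\alpha_\root,\gamma]} a_\delta^\nu\bigr)}{\prod_{\delta \in ]\alpha_\root,\alpha]} a_\delta^\nu}.
\end{align*}
The same telescoping-monomial calculation used in the proof of Lemma~\ref{lem:Mr_sep_proper} then verifies that special points inside the screen of $\alpha$ converge to the expected finite values and that special points outside escape to $\infty$, so that {\sc (special point')} and {\sc (rescaling)} hold.

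With Step~1 in hand, Steps~2 and 3 of Lemma~\ref{lem:Mr_sep_proper} transcribe directly: first-countability of $\ol{2M}_{\bn,\an}^\bC$ (each $2X_{2T}$ sits inside affine space) upgrades Step~1 to continuity of $\id\colon \ol{2M}_{\bn,\an}^\bC \to \ol{2M}_{\bn,\Gr}^\bC$, and the analogue of Cor.~\ref{cor:X_T_contains_0} for $2X_{2T}$, proved by the same constant-term argument applied to both the $q_{ij}$ and the $Q_{ij}^k$, furnishes a closed ball about the origin inside each chart. Applying Lee's Lemma 4.25(d) chart-by-chart then gives $\ol{2M}_{\bn,\an}^\bC \cong \ol{2M}_{\bn,\Gr}^\bC$.

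The main obstacle is the bookkeeping in Step~1: one must check {\sc (special point')} at every $\alpha \in V_\comp(T_b)$ and for every class of special point (adjacent component vertices, marked points in $V_\mk(T_b)$, and attaching nodes on solid edges lying outside the screen of $\alpha$) while respecting the rigid constraint $p(\psi_\alpha^\nu) = \phi_{\pi(\alpha)}^\nu$ tying the $\psi$-rescalings to the already-chosen $\phi$-rescalings. This compatibility, which is forced by the coherences \eqref{eq:2X_2T-tilde_coherences}, is the one genuinely new feature beyond the $\ol M_r^\bC$ case; once it is navigated correctly, the remaining arguments are direct translations of \cite{b:realization}.
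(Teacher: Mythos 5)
Your proposal is correct and is essentially the paper's own proof made explicit: the paper dispatches this lemma in one sentence ("a proof similar to that of Lemma~\ref{lem:Mr_sep_proper} establishes the following lemma"), and you have carried out exactly that adaptation, including the one genuinely new point — the rescalings $\psi_\alpha^\nu \in G_2$ must be chosen compatibly with the already-fixed $\phi_{\pi(\alpha)}^\nu \in G_1$ via $p(\psi_\alpha^\nu) = \phi_{\pi(\alpha)}^\nu$, which the coherences \eqref{eq:2X_2T-tilde_coherences} among the $a$-parameters and $b$-parameters make consistent.
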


\subsection{The local models $2X_{2T}$ are reduced and normal}
\label{ss:2Mn-bar_reduced_normal}

In this subsection, we will show that $\ol{2M}_\bn^\bC$ is locally toric intersection.
We will do so by showing that $2X_{2T}$ is an affine toric variety, for $2T$ with $d(2T) = 0$.
(By Lemma~\ref{lem:image_of_Wn_chart}, it suffices to consider only these $2T$'s.)
The following is our definition of a toric variety, which includes the requirement of normality:

\begin{definition}
A \emph{toric variety} over a field $k$ is a normal algebraic variety $X$ over $k$ that contains as an open dense subset an algebraic torus $T \subset X$, and that has an action by $T$ that extends the action of $T$ on itself.
\null\hfill$\triangle$
\end{definition}

A key to our proof that $2X_{2T}$ is toric is the following lemma, which appears in \cite{bier_mil:toric_binomial} but goes back at least to \cite{es:binomial}:

\medskip

\noindent
{\bf Lemma 2.2, \cite{bier_mil:toric_binomial}.}
{\it An ideal in $k[\bx]$ is toric if and only if it is integrally closed, prime, and generated by binomials of the form $\bx^a - \bx^b$.}

\medskip

The following is the main result of this subsection.

\begin{lemma}
\label{lem:2X_2T_reduced_normal}
For $2T \in W_\bn^\bC$ with $d(2T) = 0$, the local model $2X_{2T}$ is reduced and normal.
\end{lemma}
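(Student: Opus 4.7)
The plan is to use the Bier--Miller criterion to show that $\wt{2X}_{2T}$ is an affine toric variety, from which reducedness and normality of the Zariski-open subset $2X_{2T} \subset \wt{2X}_{2T}$ follow immediately. Observe first that, by inspection of \eqref{eq:2X_2T-tilde_coherences}, the defining ideal $I_{2T} \subset \bC[(b_\rho),(a_\alpha)]$ of $\wt{2X}_{2T}$ is generated by binomials of the form $\bx^a - \bx^b$. By Lemma 2.2 of \cite{bier_mil:toric_binomial}, it therefore suffices to prove that $I_{2T}$ is prime and integrally closed.

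For primality, I would construct an explicit dominant monomial parameterization. The idea is to pick, using the tree structure of $T_b$, a set $\cF$ of ``free'' component vertices after collapsing the relations in \eqref{eq:2X_2T-tilde_coherences}: the first line of \eqref{eq:2X_2T-tilde_coherences} forces the products of $a$-variables along the parallel paths from two siblings $\alpha_1,\alpha_2 \in V_\comp^{\geq 2}(T_b)$ to a common descendant $\beta \in V_\comp^1(T_b)$ to agree, while the second line forces $b_\rho$ to equal a particular such product. Introducing formal variables $t_\alpha$, $\alpha \in \cF$, and setting each $a_\alpha$ and $b_\rho$ to be an appropriate monomial in the $t_\alpha$'s produces a monomial map $\psi\colon \bA^{\#\cF} \to \wt{2X}_{2T}$ whose image is Zariski-dense. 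Dominance follows because any point $((\bx_\rho),(\bz_\alpha)) \in \ol{2M}_{\bn,2T_\bn^\top}^\bC$ in the open stratum is hit, essentially by the same inductive root-to-leaf solving argument used in the proofs of Lemmas~\ref{lem:image_of_Kr_chart} and \ref{lem:image_of_Wn_chart}. This identifies $\wt{2X}_{2T}$ with the closure of the image of $\psi$, hence exhibits it as an irreducible affine variety and shows that $I_{2T}$ is prime.

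For normality, the plan is to identify $\wt{2X}_{2T}$ with $\Spec \bC[M_{2T}]$ for the affine semigroup $M_{2T} \subset \bZ^{\#\cF}$ generated by the exponent vectors appearing in $\psi$, and then to show that $M_{2T}$ is saturated inside the lattice $L_{2T}$ it generates, i.e.\ that $km \in M_{2T}$ for some $k \geq 1$ implies $m \in M_{2T}$. The exponent vectors are path-products indexed by the bubble tree, and the key combinatorial observation is that these path-products form an antichain in a natural partial order coming from ancestry in $T_b$; in particular, the coefficients of any lattice element $m$ with respect to a basis of $L_{2T}$ are recovered locally from the values on edges of $T_b$, and the saturation condition becomes a fact about non-negativity of integer path lengths.

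The main obstacle I expect is verifying saturation: the binomial ideal $I_{2T}$ is neither a Hibi-type ideal nor the toric ideal of a graph in an obvious sense, so one must work out by hand that the semigroup $M_{2T}$ coming from the tree-pair data is saturated. The partial checks by Ruddat cited in the acknowledgments corroborate this in examples, and the stratum-by-stratum description in Lemma~\ref{lem:image_of_Wn_chart} suggests that a clean inductive argument on the structure of $T_b$ (peeling off leaves of $V_\comp^{\geq 2}(T_b)$) will close the gap. Once $\wt{2X}_{2T}$ is known to be toric and hence reduced and normal, these properties descend to $2X_{2T}$ since it is the complement in $\wt{2X}_{2T}$ of a Zariski-closed subset by \eqref{eq:Wn_local_model_def}.
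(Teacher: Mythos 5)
Your outline is close in spirit to the paper's: both pass to a lattice/semigroup presentation (the paper writes $\wt{2X}_{2T} = \Spec\bC\bigl[\bZ_{\geq0}^N/L_{2T}\bigr]$, you work with the binomial ideal $I_{2T}$ and the Bier--Miller criterion), and both reduce the statement to two saturation conditions. The two difficulties you run into are exactly where the paper's technical work is concentrated, and neither is resolved in your proposal.

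First, the primality step as you describe it does not close. Constructing a dominant monomial map $\psi\colon \bA^{\#\cF}\to\wt{2X}_{2T}$ only shows that the underlying topological space $V(I_{2T})$ has a dense irreducible open subset, i.e.\ that $\sqrt{I_{2T}}$ is prime. It does not show that $I_{2T}$ itself is radical, and hence does not show $I_{2T}$ is prime, which is what the Bier--Miller criterion requires. For example, $(x^2-y^2)\subset\bC[x,y]$ admits the dominant monomial parametrization $t\mapsto(t,t)$ onto $\{x=y\}$, yet is not prime. The correct route --- and what the paper does --- is to show that the lattice $L_{2T}$ generated by the coherence vectors is \emph{saturated} in $\bZ^N$; for a lattice ideal this is equivalent to primality, and it simultaneously yields that the ring is an integral domain. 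The paper proves saturation by choosing a set of ``canonical generators'' for $L_{2T}$ (depending on a planar embedding of $T_b$) and running a divisibility induction over $V_\comp^{\geq2}(T_b)$, peeling off the last element of $\pi^{-1}\{\rho\}$ with respect to inorder traversal. Your parametrization idea is morally a choice of basis for $\bZ^N/L_{2T}$, which is fine, but the actual argument must be phrased at the level of lattices, not sets.

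Second, the semigroup saturation / integral-closure step, which you explicitly flag as the main obstacle, is left open. The paper's resolution is Lemma~\ref{lem:ineqs_real_to_int}: using the canonical generators, the condition ``$k\bx + \sum_\alpha a_\alpha\bv_\alpha \in \bZ_{\geq0}^N$ for some $k\geq1$ implies $\bx + \sum_\alpha b_\alpha\bv_\alpha \in \bZ_{\geq0}^N$ for some integers $b_\alpha$'' is rewritten as a system in the $b_\alpha$ of the special form $b_i - b_j\geq A_{ij}$, $B_i\leq b_i\leq C_i$ (each coordinate of $\bZ^N$ sees at most two of the $\bv_\alpha$'s, with coefficients $\pm1$, because inorder traversal visits each vertex of $T_b$ twice). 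Lemma~\ref{lem:ineqs_real_to_int} then says that such a system has an integer solution whenever it has a real one. This is the genuinely new ingredient; your heuristic about ``non-negativity of integer path lengths'' gestures at the phenomenon but is not a substitute for it, and the proposed induction on the tree structure would still need this type of integrality statement at each stage. In short: the architecture is right, but both load-bearing steps need the paper's lattice arguments to actually work.
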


\noindent Our proof of this lemma is rather technical.
We suggest that the reader refer as needed to Example~\ref{ex:reduced_and_normal}, in which we demonstrate the proof of Lemma~\ref{lem:2X_2T_reduced_normal} in an example.

Before we prove Lemma~\ref{lem:2X_2T_reduced_normal}, we develop an equivalent presentation of $\wt{2X}_{2T}$.
We begin by phrasing $\wt{2X}_{2T}$ as $\Spec$ of a monoid algebra:
\begin{gather}
\label{eq:2X_2T_as_monoid_algebra}
\wt{2X}_{2T}
=
\Spec\: \bC\Bigl[ \bZ_{\geq0}^N / L_{2T} \Bigr],
\qquad
N
\coloneqq
\#\bigl(V_\comp(T_b)\setminus\{\alpha_\root\}\bigr)
+
\#\bigl(V_\inte(T_s)\setminus\{\rho_\root\}\bigr),
\\
L_{2T}
\coloneqq
\bZ\langle S_{2T}\rangle,
\quad
S_{2T}
\coloneqq
\Bigl\{
\sum_{\gamma \in [\alpha,\beta)}
a_\gamma
-
\sum_{\gamma \in [\alpha',\beta)}
a_\gamma
\Bigr\}_{
{\alpha,\alpha' \in V_\comp^{\geq2}(T_b),}
\atop
{\pi(\alpha)=\pi(\alpha'),
\beta < \alpha,\alpha'}
}
\cup
\Bigl\{
b_\rho
-
\sum_{\gamma \in [\alpha,\beta_\alpha)}
a_\gamma
\Bigr\}_{
{\alpha \in V_\comp^{\geq2}(T_b),}
\atop
{\pi(\alpha) = \rho \neq \rho_\root}
}.
\nonumber\end{gather}
By $\bZ^N_{\geq0}/L_{2T}$, we mean the monoid defined as follows.
First, consider the quotient group $\bZ^N / L_{2T}$.
Then, consider the projection $\pi\colon \bZ^N \to \bZ^N / L_{2T}$, and set $\bZ^N_{\geq0}/L_{2T} \coloneqq \pi(\bZ^N)$.
Note that $\bZ^N_{\geq0} / L_{2T}$ is a submonoid of $\bZ^N / L_{2T}$, and moreover that $\bZ^N / L_{2T}$ is the groupification of $\bZ^N_{\geq0} / L_{2T}$.

We now define a collection of elements of $\bZ^N$, which we call the \emph{canonical generators of $L_{2T}$}.
Immediately after this definition, we will prove a lemma that justifies this terminology.

\begin{definition}
\label{def:can_gens}
Fix a planar embedding of $T_b$.
For any $\alpha \in V_\comp^{\geq2}(T_b)$ (except for the one indicated in the third bullet), define an element of $\bZ^N$ in the following way:
\begin{itemize}
\item
Suppose that $\alpha$ is not the last element of $\pi^{-1}(\pi(\alpha)) \cap V_\comp^{\geq2}(T_b)$ with respect to inorder traversal.
Define $\alpha'$ to be the next element of $\pi^{-1}(\pi(\alpha)) \cap V_\comp^{\geq2}(T_b)$, and define $\beta,\beta' \in V_\comp(T_b)$ like so:
\begin{itemize}
\item
Set $\gamma$ (resp.\ $\gamma'$) to be the first place where the path from $\alpha$ to $\alpha_\root$ (resp.\ the path from $\alpha'$ to $\alpha_\root$) meets $V_\comp(T_b)$.
If $\gamma \neq \gamma'$, set $\beta \coloneqq \gamma$ and $\beta' \coloneqq \gamma'$.

\item
If, in the previous bullet, we have $\gamma = \gamma'$, then set $\beta$ and $\beta'$ both equal to the first element of $V_\comp(T_b)$ where the path from $\alpha$ to $\alpha_\root$ and the path from $\alpha'$ to $\alpha_\root$ intersect.
\end{itemize}
We associate to $\alpha$ the following vector:
\begin{align}
\label{eq:can_gen_1}
\bv_\alpha
\coloneqq
\sum_{\gamma \in [\alpha,\beta)} a_\gamma
-
\sum_{\gamma \in [\alpha',\beta')} a_\gamma.
\end{align}

\medskip

\item
Set $\rho \coloneqq \pi(\alpha)$, and suppose $\rho \neq \rho_\root$ and that $\alpha$ is the last element of $\pi^{-1}(\rho) \cap V_\comp^{\geq2}(T_b)$ with respect to inorder traversal.
Then we associate to $\alpha$ the vector
\begin{align}
\label{eq:can_gen_2}
\bv_\alpha
\coloneqq
b_\rho
-
\sum_{\gamma \in [\alpha,\beta_\alpha)} a_\gamma,
\end{align}
where, as before, $\beta_\alpha$ denotes the first element of $V_\comp^{\geq2}(T_b)$ through which the path from $\alpha$ to $\alpha_\root$ passes.

\medskip

\item
If $\alpha$ is the last element of $\pi^{-1}(\rho_\root) \cap V_\comp^{\geq2}(T_b)$ with respect to inorder traversal, then we associate no element of $\bZ^N$ to $\alpha$.
\end{itemize}
Then the collection of $\bv_\alpha$'s defined in these bullets is a generating set for $L_{2T}$, and we refer to the $\bv_\alpha$'s as the \emph{canonical generators} associated to $2T$ and the chosen planar embedding of $T_b$.
\null\hfill$\triangle$
\end{definition}

\begin{lemma}
\label{lem:can_gens}
The canonical generators generate $L_{2T}$.
\end{lemma}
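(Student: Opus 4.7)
The plan is to verify the two inclusions $\bZ\langle\{\bv_\alpha\}\rangle \subseteq L_{2T}$ and $L_{2T} \subseteq \bZ\langle\{\bv_\alpha\}\rangle$. The first is a direct check: the case-(2) vector $\bv_\alpha$ literally lies in $S_{2T}$, and the case-(1) vector $\bv_\alpha$ will be expressed in terms of $S_{2T}$-elements by the same mechanism used below.

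The central preliminary is a \emph{path-independence lemma}: for any $\alpha_1,\alpha_2\in V_\comp^{\geq2}(T_b)$ with $\pi(\alpha_1)=\pi(\alpha_2)$ and any two vertices $\beta_1,\beta_2\in V_\comp(T_b)$ each lying on both of the paths $[\alpha_i,\alpha_\root]_{T_b}$, the two elements $\sum_{\gamma\in[\alpha_1,\beta_j)}a_\gamma - \sum_{\gamma\in[\alpha_2,\beta_j)}a_\gamma$, for $j\in\{1,2\}$, coincide in $\bZ^N$. The reason is that their difference is a common trunk segment (between $\beta_1$ and $\beta_2$, running above the nearest common $V_\comp$-ancestor of $\alpha_1,\alpha_2$) traversed once with a $+$ sign and once with a $-$ sign, so it cancels. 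I will write $w(\alpha_1,\alpha_2)$ for this common value; observe that the first family of $S_{2T}$ consists exactly of such $w(\alpha_1,\alpha_2)$.

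With this in hand, I fix $\rho\in V_\inte(T_s)$, enumerate $\pi^{-1}(\rho)\cap V_\comp^{\geq2}(T_b)$ in inorder as $\alpha_1<\cdots<\alpha_{k_\rho}$, and establish two facts. First, each case-(1) canonical generator $\bv_{\alpha_i}$ (for $i<k_\rho$) equals $w(\alpha_i,\alpha_{i+1})$: whether the first $V_\comp$-ancestors $\gamma,\gamma'$ of $\alpha_i,\alpha_{i+1}$ are distinct or equal, splitting both sums at a shared $V_\comp$-ancestor $\delta$ and invoking path-independence collapses both sub-cases of Def.~\ref{def:can_gens} to the same element $w(\alpha_i,\alpha_{i+1})\in L_{2T}$. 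Second, for $\rho\neq\rho_\root$, the case-(2) generator $\bv_{\alpha_{k_\rho}}=b_\rho-\sum_{\gamma\in[\alpha_{k_\rho},\beta_{\alpha_{k_\rho}})}a_\gamma$ is manifestly in the second family of $S_{2T}$.

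These two facts immediately yield the reverse inclusion by telescoping. For a type-1 generator $w(\alpha_i,\alpha_j)$ with $i<j$, path-independence gives
\begin{align}
w(\alpha_i,\alpha_j)
=
\sum_{\ell=i}^{j-1} w(\alpha_\ell,\alpha_{\ell+1})
=
\sum_{\ell=i}^{j-1}\bv_{\alpha_\ell}.
\end{align}
For a type-2 generator $b_\rho-\sum_{\gamma\in[\alpha_i,\beta_{\alpha_i})}a_\gamma$ with $\rho\neq\rho_\root$, subtracting the analogous expression for $\alpha_{k_\rho}$ (which is $\bv_{\alpha_{k_\rho}}$) from itself yields $w(\alpha_i,\alpha_{k_\rho})$, so
\begin{align}
b_\rho-\sum_{\gamma\in[\alpha_i,\beta_{\alpha_i})}a_\gamma
=
\bv_{\alpha_{k_\rho}}-w(\alpha_{k_\rho},\alpha_i)
=
\bv_{\alpha_{k_\rho}}-\sum_{\ell=i}^{k_\rho-1}\bv_{\alpha_\ell}.
\end{align}
Since every element of $S_{2T}$ falls into one of these two types (as $\rho$ ranges over $V_\inte(T_s)$), the inclusion $L_{2T}\subseteq\bZ\langle\{\bv_\alpha\}\rangle$ follows, completing the proof. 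The main obstacle is a careful bookkeeping argument for path-independence and the case-(1) vs.\ case-(2) split in Def.~\ref{def:can_gens}; conceptually, however, nothing beyond telescoping along the inorder enumeration is required.
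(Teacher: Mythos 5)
Your overall strategy mirrors the paper's: you use path-independence and telescoping along inorder to reduce everything to canonical generators, and you handle the type-$2$ elements of $S_{2T}$ by subtracting off the type-$2$ canonical generator of the last $\alpha$ in inorder. The abstraction $w(\alpha_1,\alpha_2)$ is a nice way to package what the paper does via explicit decompositions. Your bookkeeping is, if anything, cleaner than the paper's (which splits the argument into three Steps that each iterate a decomposition), and the concluding sign glitch in your last display ($\bv_{\alpha_{k_\rho}}-w(\alpha_{k_\rho},\alpha_i)$ vs.\ $\bv_{\alpha_{k_\rho}}+w(\alpha_{k_\rho},\alpha_i)$) is harmless because your final telescoped expression is correct.

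The one place where your argument is thinner than the paper's is the justification of your first fact, that $\bv_{\alpha_i}=w(\alpha_i,\alpha_{i+1})$. You write that ``splitting both sums at a shared $V_\comp$-ancestor $\delta$ and invoking path-independence collapses both sub-cases of Def.~\ref{def:can_gens}.'' But in the first sub-case of Def.~\ref{def:can_gens}, where $\gamma\neq\gamma'$, the definition sets $\beta=\gamma$ and $\beta'=\gamma'$, which are \emph{not} common ancestors of $\alpha_i,\alpha_{i+1}$: they are the first ancestors lying in $V_\comp(T_b)$ of $\alpha_i$ and $\alpha_{i+1}$ separately. Splitting at a shared ancestor $\delta$ then gives
\begin{align}
w(\alpha_i,\alpha_{i+1})
=
\bv_{\alpha_i}
+
\Bigl(\sum_{\gamma\in[\beta,\delta)}a_\gamma-\sum_{\gamma\in[\beta',\delta)}a_\gamma\Bigr),
\end{align}
and path-independence says nothing about the correction term in parentheses, which is nonzero whenever $\beta\neq\beta'$. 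The paper's Step~1 deals with precisely this by introducing $\beta''$ and peeling off the correction as a further combination of $S_{2T}$-elements; its Step~2 likewise iterates a decomposition via a $\delta\in V_\comp^{\geq2}(T_b)$. You either need to argue that this sub-case is vacuous for the trees in question (e.g.\ by observing that every $\alpha\in\pi^{-1}(\rho)\cap V_\comp^{\geq2}(T_b)$ has the same first $V_\comp^{\geq2}$-ancestor, the exit vertex of the subtree $\pi^{-1}(\rho)$, so that $\gamma=\gamma'$ always), or supply the extra decomposition and show the correction term is absorbed --- path-independence alone does not close this gap.
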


\begin{proof}
{\bf Step 1:}
For any $\alpha \in V_\comp^{\geq2}(T_b) \setminus \{\alpha_0\}$, $\bv_\alpha$ lies in $L_{2T}$.

\medskip

\noindent
First, suppose that $\alpha$ is not the last element of $\pi^{-1}(\pi(\alpha)) \cap V_\comp^{\geq2}(T_b)$ with respect to inorder traversal.
Defining $\alpha'$, $\beta$, and $\beta'$ as in the first bullet of Definition~\ref{def:can_gens}, we must show that $\bv_\alpha = \sum_{\gamma \in [\alpha,\beta)} a_\gamma - \sum_{\gamma \in [\alpha',\beta')} a_\gamma$ can be written as a sum of elements of $S_{2T}$.
If $\beta = \beta'$, then $\bv_\alpha$ is simply an element of $S_{2T}$.
Otherwise, set $\beta''$ to be the first element of $V_\comp(T_b)$ where the paths from $\beta$ to $\alpha_\root$ resp.\ from $\beta'$ to $\alpha_\root$ intersect, and decompose $\bv_\alpha$ like so:
\begin{align}
\sum_{\gamma \in [\alpha,\beta)} a_\gamma - \sum_{\gamma \in [\alpha',\beta')} a_\gamma
=
\biggl(\sum_{\gamma \in [\alpha,\beta'')} a_\gamma - \sum_{\gamma \in [\alpha',\beta'')} a_\gamma\biggr)
-
\biggl(\sum_{\gamma \in [\beta,\beta'')} a_\gamma - \sum_{\gamma \in [\beta',\beta'')} a_\gamma\biggr).
\end{align}

Second, set $\rho \coloneqq \pi(\alpha)$, and suppose $\rho \neq \rho_\root$ and that $\alpha$ is the last element of $\pi^{-1}\{\rho\} \cap V_\comp^{\geq2}(T_b)$ with respect to inorder traversal.
Then $\bv_\alpha = b_\rho - \sum_{\gamma \in [\alpha,\beta_\alpha)} a_\gamma$ is an element of $S_{2T}$.

\medskip

\noindent
{\bf Step 2:}
Fix any $\alpha,\alpha' \in V_\comp^{\geq2}(T_b)$ and $\beta \in V_\comp(T_b)$ with $\pi(\alpha) = \pi(\alpha')$ and $\beta < \alpha, \alpha'$.
Then $\bv_\alpha$ can be written as a sum of canonical generators of the form \eqref{eq:can_gen_1}.

\medskip
 
\noindent
Without loss of generality, we may assume that $\alpha,\alpha'$ are consecutive elements of $\pi^{-1}(\pi(\alpha)) \cap V_\comp^{\geq2}(T_b)$ with respect to inorder traversal.
Indeed, by switching $\alpha$ and $\alpha'$ if necessary, we may assume that $\alpha'$ follows $\alpha$, and if we denote by $(\alpha = \alpha_1, \alpha_2,\ldots,\alpha_k = \alpha')$ the interval from $\alpha$ to $\alpha'$ in $\pi^{-1}(\pi(\alpha)) \cap V_\comp^{\geq2}(T_b)$ with respect to inorder traversal, we can write
\begin{align}
\sum_{\gamma \in [\alpha,\beta)} a_\gamma - \sum_{\gamma \in [\alpha',\beta)} a_\gamma
\quad
=
\quad
\sum_{i=1}^{k-1} \: \Bigl( \sum_{\gamma \in [\alpha_i,\beta)} a_\gamma - \sum_{\gamma \in [\alpha_{i+1},\beta)} a_\gamma \Bigr).
\end{align}
Note that for each $i$, $\beta$ is on the path from $\alpha_i$ to $\alpha_\root$.
We may also assume that $\beta$ is the first element of $V_\comp(T_b)$ where the paths from $\alpha$ to $\alpha_\root$ resp.\ from $\alpha'$ to $\alpha_\root$ intersect.

If the path from $\alpha$ to $\beta$ does not pass through any elements of $V_\comp(T_b)$ besides $\alpha$ and possibly $\beta$, then $\bv_\alpha$ is already a canonical generator.
Otherwise, define $\delta$ to be the first element of $V_\comp^{\geq2}(T_b)$ other than $\alpha$ through which the path from $\alpha$ to $\alpha_\root$ passes; define $\delta'$ similarly, using the path from $\alpha'$ to $\alpha_\root$.
We then have
\begin{align}
\bv_\alpha
=
\biggl(\sum_{\gamma \in [\alpha,\delta)} a_\gamma - \sum_{\gamma \in [\alpha',\delta')} a_\gamma\biggr)
-
\biggl(\sum_{\gamma \in [\delta,\beta)} a_\gamma - \sum_{\gamma \in [\delta',\beta)} a_\gamma\biggr).
\end{align}
Moreover, the first parenthesized expression in this difference is the canonical generator $\bv_\alpha$, so we have reduced the task at hand to showing that the second parenthesized expression can be decomposed as a sum of canonical generators.
Proceeding inductively in this fashion, we obtain the desired conclusion.

\medskip

\noindent
{\bf Step 3:}
Fix any $\alpha \in V_\comp^{\geq2}(T_b)$, set $\rho\coloneqq \pi(\alpha)$, and suppose $\rho \neq \rho_\root$.
Then $\bv_\alpha$ can be written as a sum of canonical generators of the form \eqref{eq:can_gen_1} and \eqref{eq:can_gen_2}.

\medskip

\noindent
If $\alpha$ is the last element of $\pi^{-1}\{\rho\} \cap V_\comp^{\geq2}(T_b)$ with respect to inorder traversal, then $\bw$ is already a canonical generator.
Next, suppose that it is not, and set $\alpha'$ to be the last element of $\pi^{-1}\{\rho\} \cap V_\comp^{\geq2}(T_b)$.
Then we can decompose $\bv_\alpha$ like so:
\begin{align}
\bv_\alpha
&=
b_\rho - \sum_{\gamma \in [\alpha,\beta_\alpha)} a_\gamma
=
\biggl(b_\rho - \sum_{\gamma \in [\alpha',\beta_{\alpha'})} a_\gamma\biggr)
-
\biggl(
\sum_{\gamma \in [\alpha,\beta_\alpha)} a_\gamma
-
\sum_{\gamma \in [\alpha',\beta_{\alpha'})} a_\gamma
\biggr).
\end{align}
\end{proof}

The last piece of preparation we need for the proof of Lemma~\ref{lem:2X_2T_reduced_normal} is the following elementary discrete-geometric result.

\begin{lemma}
\label{lem:ineqs_real_to_int}
Fix $n \in \bZ_{\geq1}$ and $(A_{ij})_{1\leq i < j\leq n}, (B_i)_{1\leq i\leq n}, (C_i)_{1\leq i\leq n} \subset \bZ$, and consider the following system in real variables $x_1,\ldots,x_n$:
\begin{align}
\label{eq:ineqs_real_to_int}
x_i - x_j \geq A_{ij} \:\:\forall\:\: 1\leq i<j\leq n,
\qquad
B_i \leq x_i \leq C_i \:\:\forall\:\: 1\leq i\leq n.
\end{align}
If \eqref{eq:ineqs_real_to_int} has a solution in $\bR^n$, then it has a solution in $\bZ^n$.
\end{lemma}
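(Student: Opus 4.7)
The plan is to prove this by induction on $n$, rounding one coordinate at a time. The base case $n=1$ is immediate: \eqref{eq:ineqs_real_to_int} reduces to $B_1 \leq x_1 \leq C_1$, so if a real solution exists then $B_1 \leq C_1$ and $x_1 = B_1 \in \bZ$ works.

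For the inductive step, assume the result for $n-1$, and let $(x_1^*, \ldots, x_n^*)$ be a real solution to \eqref{eq:ineqs_real_to_int}. I will set $\tilde x_n \coloneqq \lfloor x_n^* \rfloor$ and then reduce to a smaller system of the same form. To justify the rounding, note that $B_n \leq x_n^*$ together with $B_n \in \bZ$ gives $\tilde x_n \geq B_n$, while $\tilde x_n \leq x_n^* \leq C_n$; moreover, every constraint of the form $x_i^* - x_n^* \geq A_{in}$ (for $i<n$) implies $x_i^* - \tilde x_n \geq A_{in}$, since we decreased $x_n^*$.

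Next, consider the system in variables $x_1, \ldots, x_{n-1}$ consisting of the original inequalities $x_i - x_j \geq A_{ij}$ for $1\leq i<j\leq n-1$, together with the box constraints $B_i' \leq x_i \leq C_i$, where $B_i' \coloneqq \max(B_i,\tilde x_n + A_{in}) \in \bZ$. This is of exactly the form treated by the lemma, with all right-hand sides integer, and it admits the real solution $(x_1^*, \ldots, x_{n-1}^*)$ by the previous paragraph. By the inductive hypothesis there is an integer solution $(\tilde x_1, \ldots, \tilde x_{n-1})$. The tuple $(\tilde x_1, \ldots, \tilde x_n)$ is then an integer solution to the original system: the only new constraints to check are $\tilde x_i - \tilde x_n \geq A_{in}$ for $i<n$, and these follow from $\tilde x_i \geq B_i' \geq \tilde x_n + A_{in}$.

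There is no real obstacle in this argument; the only subtlety is that one must round $x_n$ \emph{downward}, so that the integrality of $B_n$ preserves the lower bound $\tilde x_n \geq B_n$ while the upper-difference constraints $x_i - x_n \geq A_{in}$ are only loosened. I would remark that the same lemma could alternatively be deduced from the total unimodularity of the constraint matrix, since the feasible polytope is bounded by the box constraints and has integer right-hand sides, but the one-variable-at-a-time rounding above is more elementary and self-contained.
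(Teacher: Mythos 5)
Your proof is correct and takes essentially the same approach as the paper's: peel off $x_n$, round it down to an integer (the paper pins it to $B_n$ rather than $\lfloor x_n^*\rfloor$, but the mechanism is identical since the constraints $x_i - x_n \geq A_{in}$ only loosen as $x_n$ decreases while $x_n\ge B_n$ is preserved by integrality of $B_n$), and apply the inductive hypothesis to the reduced $(n-1)$-variable system with updated lower bounds $\max\{B_i,\,\tilde x_n+A_{in}\}$. The total-unimodularity remark is a valid alternative route but is not the one the paper takes.
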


\begin{proof}
We prove the lemma by induction on $n$.
For $n=1$, the lemma says that if, for $B_1,C_1 \in \bZ$, the inequality $B_1 \leq x \leq C_1$ has a real solution, then it has an integer solution.
This is clear.

Next, suppose that we have proven the claim up to, but not including, some $n \geq 2$.
Choose a real solution $\bx^1 \in \bR^n$ of \eqref{eq:ineqs_real_to_int}.
Then $(x_1^1,\ldots,x_{n-1}^1,B_n)$ also satisfies \eqref{eq:ineqs_real_to_int}.
A vector $(x_1,\ldots,x_{n-1},B_n) \in \bR^n$ satisfies \eqref{eq:ineqs_real_to_int} if and only if the variables $x_1,\ldots,x_{n-1}$ satisfies
\begin{align}
\label{eq:ineqs_real_to_int_smaller}
x_i - x_j \geq A_{ij} \:\:\forall\:\: 1 \leq i < j \leq n-1,
\qquad
\max\{B_i,B_n+A_{in}\} \leq x_i \leq C_i \:\:\forall\:\: 1 \leq i \leq n-1,
\end{align}
so $(x_1^1,\ldots,x_{n-1}^1)$ satisfies \eqref{eq:ineqs_real_to_int_smaller}.
It follows from the inductive hypothesis that \eqref{eq:ineqs_real_to_int_smaller} has an integer solution $\bx^2 = (x_1^2,\ldots,x_{n-1}^2)$.
Then $(x_1^2,\ldots,x_{n-1}^2,B_n)$ is an integer solution of \eqref{eq:ineqs_real_to_int}, so we have proven the inductive hypothesis.
\end{proof}

\medskip

\noindent
{\it Proof of Lemma~\ref{lem:2X_2T_reduced_normal}.}
For any monoid $M$, $\Spec M$ is reduced if and only if $M$ is torsion-free.
To show that $\wt{2X}_{2T}$ is reduced, it is therefore enough to show that $L_{2T}$ is saturated in $\bZ^N$.
Similarly, $\Spec M$ is reduced if and only if $M$ is saturated in its groupification, so to prove that $\wt{2X}_{2T}$ is normal, it suffices to show that $\bZ^N_{\geq0} / L_{2T}$ is saturated in $\bZ^N / L_{2T}$.

\medskip

\noindent
{\bf Step 1:}
We show that $L_{2T}$ is saturated in $\bZ^N$.

\medskip

\noindent
Define $\alpha_0$ to be the last element of $\pi^{-1}(\rho_\root) \cap V_\comp^{\geq2}(T_b)$.
For any $\alpha \in V_\comp^{\geq2}(T_b) \setminus \{\alpha_0\}$, we denote by $\bv_\alpha$ the corresponding canonical generator.
To show that $L_{2T}$ is saturated in $\bZ^N$, we must show that if we have $k\bx = \sum_{\alpha \in V_\comp^{\geq2}(T_b) \setminus \{\alpha_0\}} a_\alpha\bv_\alpha$ for some $k \geq 1$, $\bx \in \bZ^N$, and $(a_\alpha) \in \bZ^{\#V_\comp^{\geq2}(T_b) - 1}$, then there exists a sequence $(b_\alpha)$ of integers such that the equality $\bx = \sum_\alpha b_\alpha\bv_\alpha$ holds.
To do this, we will show that each $a_\alpha$ must be divisible by $k$.

Fix $\rho \in V_\inte(T_s) \setminus \{\rho_\root\}$.
We will show inductively that for each $\alpha \in V_\comp^{\geq2}(T_b) \cap \pi^{-1}\{\rho_\root\}$, $a_\alpha$ is divisible by $k$.
Set $\alpha_1$ resp.\ $\alpha_2$ to be the last resp.\ second-to-last elements of $V_\comp^{\geq2}(T_b) \cap \pi^{-1}\{\rho\}$, with respect to inorder traversal.
As the base case, we will show that $a_{\alpha_1}$ is divisible by $k$.
This follows from the fact that $\bv_{\alpha_1}$ is the only $\bv_\alpha$ which has a nonzero entry in the coordinate corresponding to $\rho$, and the fact that every entry of $\sum_\alpha a_\alpha\bv_\alpha = k\bx$ is divisible by $k$.
From this base case, it follows that every entry of $\sum_{\alpha \neq \alpha_1} a_\alpha\bv_\alpha$ is divisible by $k$.
The vertices $\alpha_1$ and $\alpha_2$ are the only elements of $V_\comp^{\geq2}(T_b) \cap \pi^{-1}\{\rho\}$ which are nonzero in the entry corresponding to $\alpha_1$, so it follows from the previous sentence that $a_{\alpha_2}$ is divisible by $k$.
Proceeding inductively in this fashion, we see that for each $\alpha \in V_\comp^{\geq2}(T_b) \cap \pi^{-1}\{\rho\}$, $a_\alpha$ is divisible by $k$.

It remains to show that for each $\alpha \neq \alpha_0$ in $V_\comp^{\geq2}(T_b) \cap \pi^{-1}\{\rho_\root\}$, $a_\alpha$ is divisible by $k$.
This follows from an argument similar to the one we made in the previous paragraph.

\medskip

\noindent
{\bf Step 2:}
We show that $\bZ^N_{\geq0} / L_{2T}$ is saturated in $\bZ^N / L_{2T}$.

\medskip

\noindent
We must show that if $k\bx + \sum_\alpha a_\alpha\bv_\alpha$ lies in $\bZ^N_{\geq0}$ for some $k \geq 1$, $\bx \in \bZ^N$, and $(a_\alpha)$ a sequence of integers, then there exists another integer sequence $(b_\alpha)$ such that $\bx + \sum_\alpha b_\alpha\bv_\alpha$ lies in $\bZ^N_{\geq0}$.
To do so, we will rephrase the condition that $\bx + \sum_\alpha b_\alpha\bv_\alpha \in \bZ^N$ lies in $\bZ^N_{\geq0}$ by analyzing, for any given $\alpha \in V_\comp(T_b) \setminus \{\alpha_\root\}$ or $\rho \in V_\inte(T_s)\setminus\{\rho_\root\}$, the entry of $\bv_{\alpha'}$ corresponding to $\alpha$ or $\rho$.
\begin{itemize}
\item
Fix $\alpha \in V_\comp(T_b) \setminus \{\alpha_\root\}$.
There there are at most two elements $\alpha' \in V_\comp^{\geq2}(T_b) \setminus \{\alpha_0\}$ such that the index-$\alpha$ entry of $\bv_{\alpha'}$ is nonzero, because inorder traversal meets each vertex of $T_b$ twice.
Moreover, suppose that for some distinct $\alpha'$ and $\alpha''$ in $V_\comp^{\geq2}(T_b) \setminus \{\alpha_0\}$, the index-$\alpha$ entries of $\bv_{\alpha'}$ and $\bv_{\alpha''}$ are nonzero; also, suppose $\alpha' < \alpha''$ with respect to inorder traversal.
Then the index-$\alpha$ entries of $\bv_{\alpha'}$ resp.\ $\bv_{\alpha''}$ are $-1$ resp.\ $1$.

\medskip

\item
Fix $\rho \in V_\inte(T_s) \setminus \{\alpha_\root\}$.
If $\pi^{-1}\{\rho\} \cap V_\comp^{\geq2}(T_b)$ is nonempty, then set $\alpha'$ to be the last element of this set with respect to inorder traversal.
Then $\bv_{\alpha'}$ is the only canonical generator whose index-$\alpha$ entry is nonzero.
Alternately, if $\pi^{-1}\{\rho\} \cap V_\comp^{\geq2}(T_b)$ is empty, there is no canonical generator whose index-$\alpha$ entry is nonzero.
\end{itemize}
It follows from this analysis that we can rephrase the condition that $\bx + \sum_\alpha b_\alpha\bv_\alpha \in \bZ^N$ lies in $\bZ^N_{\geq0}$ as a system of inequalities of the form considered in Lemma~\ref{lem:ineqs_real_to_int}.
The hypothesis that $k\bx + \sum_\alpha b_\alpha\bv_\alpha$ lies in $\bZ^N_{\geq0}$ implies that our system has a rational solution in the $b_\alpha$'s, so by Lemma~\ref{lem:ineqs_real_to_int}, it has an integer solution.
\null\hfill$\triangle$

\medskip

\begin{example}
\label{ex:reduced_and_normal}
We will now demonstrate our proof that the local models are reduced and normal, in the case of a particular tree-pair.
The following figure depicts a tree-pair $2T = T_b \to T_s$, with its gluing parameters labeling the corresponding edges and with colored paths corresponding to the coherence conditions on the gluing parameters.
\begin{figure}[H]
\centering
\def\svgwidth{0.4\columnwidth}
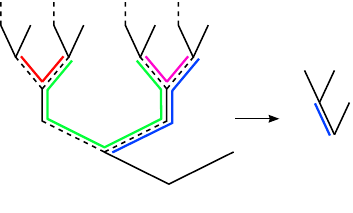
\end{figure}
\noindent
That is, the local model $2X_{2T}$ is the following closed subset of $\bC^7$:
\begin{align}
2X_{2T}
=
\Bigl\{
(a,b,c,d,e,f,A) \in \bC^7
\:\left|\:
{{c=d, \: e=f,}
\atop
{ad = be, \: bf = A}}
\right.\Bigr\}.
\end{align}
As $\Spec$ of a monoid algebra, we can write $2X_{2T}$ like so:
\begin{gather}
2X_{2T}
=
\Spec \bigl(\bZ_{\geq0}^7 / L\bigr),
\quad
L
\coloneqq
\langle \bv_1,\bv_2,\bv_3,\bv_4\rangle,
\quad
\left(\begin{array}{c}
\bv_1
\\
\bv_2
\\
\bv_3
\\
\bv_4
\end{array}\right)
\coloneqq
\left(\begin{array}{ccccccc}
0 & 0 & -1 & 1 & 0 & 0 & 0
\\
-1 & 1 & 0 & -1 & 1 & 0 & 0
\\
0 & 0 & 0 & 0 & -1 & 1 & 0
\\
0 & -1 & 0 & 0 & 0 & -1 & 1
\end{array}\right).
\end{gather}

First, we show that $2X_{2T}$ is reduced, which is equivalent to $L$ being saturated in $\bZ^7$.
This is equivalent to the statement that if we have $k\bx = \sum_{i=1}^4 a_i\bv_i$ for some $k \geq 1$, $\bx \in \bZ^7$, and $a_1,a_2,a_3,a_4 \in \bZ$, then we have $\bx = \sum_{i=1}^4 b_i\bv_i$ for some $b_1,b_2,b_3,b_4 \in \bZ$.
Since $L$ is freely generated by $(\bv_1,\bv_2,\bv_3,\bv_4)$, this is equivalent to the statement that if $\sum_{i=1}^4 a_i\bv_i$ is divisible by $k \geq 1$, then $k$ divides each $a_i$.
The \nth{7} entry of $\sum_{i=1}^4 a_i\bv_i$ is $a_4$, hence $k$ divides $a_4$, and in addition we see that $k$ divides $\sum_{i=1}^3 a_i\bv_i$.
The \nth{6} entry of $\sum_{i=1}^3 a_i\bv_i$ is $a_3$, hence $k$ divides $a_3$.
Proceeding in this fashion, we see that $k$ indeed divides each $a_i$, so $L$ is saturated in $\bZ^7$.

Next, we show that $2X_{2T}$ is normal.
This is equivalent to $\bZ_{\geq0}^7 / L$ being saturated in its groupification $\bZ^7 / L$,
which is in turn equivalent to the condition that if, for some $k \geq 1$, $\bx \in \bZ^7$, and $a_1,a_2,a_3,a_4 \in \bZ$, the vector $k\bx + \sum_{i=1}^4 a_i\bv_i$ lies in $\bZ_{\geq0}^7$, then there exist $b_1,b_2,b_3,b_4 \in \bZ$ such that $\bx + \sum_{i=1}^4 b_i\bv_i$ lies in $\bZ_{\geq0}^7$.
This is equivalent to the statement that if the following system  in $b_1,b_2,b_3,b_4$ has a rational solution, then it has an integer solution:
\begin{gather}
b_1 \leq x_3,
\qquad
b_2 \leq x_1,
\qquad
b_4 \geq -x_7,
\\
b_1-b_2 \geq -x_4,
\qquad
b_2-b_3 \geq -x_5,
\qquad
b_2-b_4 \geq -x_2,
\qquad
b_3-b_4 \geq -x_6.
\nonumber
\end{gather}
This implication follows from Lemma~\ref{lem:ineqs_real_to_int}.
\null\hfill$\triangle$
\end{example}

\begin{proof}
We prove the lemma by induction on $n$.
For $n=1$, the lemma says that if, for $B_1,C_1 \in \bZ$, the inequality $B_1 \leq x \leq C_1$ has a real solution, then it has an integer solution.
This is clear.

Next, suppose that we have proven the claim up to, but not including, some $n \geq 2$.
Choose a real solution $\bx^1 \in \bR^n$ of \eqref{eq:ineqs_real_to_int}.
Then $(x_1^1,\ldots,x_{n-1}^1,B_n)$ also satisfies \eqref{eq:ineqs_real_to_int}.
A vector $(x_1,\ldots,x_{n-1},B_n) \in \bR^n$ satisfies \eqref{eq:ineqs_real_to_int} if and only if the variables $x_1,\ldots,x_{n-1}$ satisfies
\begin{align}
\label{eq:ineqs_real_to_int_smaller}
x_i - x_j \geq A_{ij} \:\:\forall\:\: 1 \leq i < j \leq n-1,
\qquad
\max\{B_i,B_n+A_{in}\} \leq x_i \leq C_i \:\:\forall\:\: 1 \leq i \leq n-1,
\end{align}
so $(x_1^1,\ldots,x_{n-1}^1)$ satisfies \eqref{eq:ineqs_real_to_int_smaller}.
It follows from the inductive hypothesis that \eqref{eq:ineqs_real_to_int_smaller} has an integer solution $\bx^2 = (x_1^2,\ldots,x_{n-1}^2)$.
Then $(x_1^2,\ldots,x_{n-1}^2,B_n)$ is an integer solution of \eqref{eq:ineqs_real_to_int}, so we have proven the inductive hypothesis.
\end{proof}

\section{Further directions: wonderful compactifations }
\label{sec:wond-comp-}

There  are several interpretations of the moduli space \(\ol M_r\) as an iterated blow up \cite{keel,fm}.
In the work \cite{cgk} an iterated blow up construction is provided for the moduli space of the stable pointed projective spaces.
The work of Li Li \cite{Li09} explains how all of these constructions are particular cases of wonderful compactification spaces.

In this section we outline a construction for the space \(\ol{2M}_\bn^\bC\) that is analogous to the construction of \cite{fm}.
The main goal for us is to construct
the relative version of the Fulton-MacPherson spaces for smooth maps \(g:X\to Y\) of the smooth varieties. This section does not contains any proofs since we postponing
the details for the forth-coming publication.

The relative versions of the Fulton--MacPherson spaces are not smooth in general, but their singularities are at worst toric.
Thus if want to have an iterative blow up construction of the relative \fm spaces we need to generalize the construction from \cite{fm} to the setting where we allow toric singularities of the spaces.
We rely on the wonderful compactification approach to the \fm spaces \cite{Li09} in our generalization.
We also construct the relative version of Ulyanov spaces \cite{ul} and we conjecture that the Ulyanov spaces provide a resolution for the singularities of the relative \fm spaces.

We start with the conventions and notations for the diagonals and the multi-diagonals in the relative setting.
In the next subsection we formulate a weak version of the transversality condition from the wondreful blow-up construction of \cite{Li09}.

\subsection{Relative diagonals}

Let us assume that the labels \(\mathbf{n}\) are ordered, \(n_1\ge n_2\ge \dots\ge n_\ell\) and \(n_i>0\) for \(i\le \ell^+(\mathbf{n})\), \(n_i=0\) for \(i>\ell^+(\mathbf{n})\).
In the previous formulas we used \(|\mathbf{n}|=\sum_i n_i\) and later we also use \(\|\mathbf{n}\|=|\mathbf{n}|+\ell(\mathbf{n})-\ell^+(\mathbf{n}).\)
We use the following notation for finite sets: \([k]\) stands for the set \(\{1,\dots,k\}\), and \([k,l]\) stands for \(\{k,\dots,l\}\).
For a set \(X\) and \(I\subset [n]\) we set \(\Delta_X(I)\subset X^n\) to be the corresponding diagonal.

If \(I\subset [\|\mathbf{n}\|]\) then \(I^{\mathbf{n}}_Y\subset [\ell(\mathbf{n})]\) is the subset defined by
\begin{align}
I^{\mathbf{n}}_Y
=
\bigl\{i
\:\big|\:
i\in [\ell^+(\mathbf{n})], I\cap [n_i,n_{i+1}-1]\ne \emptyset\bigr\}
\cup
\bigl\{i
\:\big|\:
i\in [\ell^+(\mathbf{n})+1,\ell(\mathbf{n})],
  i-\ell^+(\mathbf{n})\in I-|\mathbf{n}|\bigr\}.
  \end{align}
Similarly, we define \(I^{\mathbf{n}}_X=I\cap [|\mathbf{n}|]\). 

In this subsection we present a blow-up construction of the spaces from the previous sections.
To be precise, we work with a smooth map \(g:X\to Y\) and we use the notation \((X/Y)^{\mathbf{n}}\) for the push-out:
\[\begin{tikzcd}
    X^{|\mathbf{n}|}\arrow[r,"g"]&Y^{|\mathbf{n}|}\\ (X/Y)^{\mathbf{n}}\arrow[r,"\pi_Y"]\arrow[u,"\pi_X"] & Y^{\ell(\mathbf{n})}\arrow[u,"\delta(\mathbf{n})"].
  \end{tikzcd}
\]
Here we used the notation \(\delta(\bn)\) for the map of the multi-diagonals corresponding to \(\bn\):
the image of map \(\delta(\mathbf{n})\) is the multidiagonal
\begin{align}
\Delta_Y(1,\dots,n_1)\cap\dots\cap\Delta_Y(n_1+n_2+\dots+n_{\ell^+-1}+1,\dots,n_1+\dots +n_{\ell^+}),\end{align}
\(\ell^+=\ell^+(\mathbf{n})\), and the map \(\delta(\mathbf{n})\) projects along the \(Y\)-factor that is labeled by \(0\).

The relative diagonals \(\Delta_{X/Y}(I)\subset (X/Y)^{\mathbf{n}}\) are naturally labeled by the subset \(I\subset [\|\mathbf{n}\|]\):
\begin{align}
\Delta_{X/Y}(I)=\pi_Y^{-1}(\Delta_Y(I^{\mathbf{n}}_Y))\cap \pi_X^{-1}(\Delta_X(I^{\mathbf{n}}_X)).
\end{align}
The relative multi-diagonals are intersections of the diagonals we described above.
  


\subsection{Toroidal wonderful compactifications}
\label{sec:toro-wond-comp}

Let us recall and slightly generalize some terminology from the paper \cite{Li09}. 
We also introduce a mild extension of the main result of the mentioned paper.

Fix a toroidal variety \(Z\), i.e.\ a variety with toric singularities.
An \emph{arrangement} of subvarieties \(\cS\) is a finite collection of nonsingular subvariaties such that all nonempty scheme-theoretic intersections of subvarieties are again in \(\cS\).
Equivalently, we can require any pairwise intersection to be clean.

Let \(Z\) be smooth.
A subset \(\mathcal{G}\subset \mathcal{S}\) is then called a \emph{building set} if, for all \(S\in\cS\setminus\cG\), the set of minimal elements in \(\underline{S}=\{G\in \cG \:|\: G\supset S\}\) intersect transversely and the intersection is \(S\).
Up to this point we were repeating the definitions of \cite{Li09} and now we introduce a new object.

Now let us allow \(Z\) to have toric singularities.
We can no longer talk about transverse subvarieties, and we need to modify the previous definition.
A subset \(\cG \subset \cS\) is called a \emph{toroidal building set} if, for all \(S\in\cS\setminus\cG\), the set of minimal elements in \(\underline{S}\) intersect scheme-theoretically at \(S\) and any point \(s\in S\) has a neighborhood where \(\cup_{G\in \underline{S}} G\) is defined by a monomial ideal\footnote{Since locally \(Z\) is modeled on a possibly-singular toric variety, the notion of a monomial ideal is well-defined.}.
More generally, we call the collection of subsets \(\cG\) a building set if the intersection of the subsets of the collection form an arrangement and \(\cG\) is a building set for this arrangement.

Let us denote by \(\Delta_{X/Y}\) the collection of all relative diagonals of \((X/Y)^{\mathbf{n}}\) and by \(\Delta_{X/Y}^\bullet\) the collection of all relative multidiagonals.

\begin{lemma}
For any \(\mathbf{n}\), the collection \(\Delta_{X/Y}^\bullet\) is an arrangement and \(\Delta_{X/Y}\) is a toroidal building set for it.
\end{lemma}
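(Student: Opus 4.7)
The plan is to reduce to an explicit étale-local model in which the relative diagonals become linear coordinate subspaces; the two claims then follow from elementary subspace-arrangement geometry.

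Since $g\colon X\to Y$ is smooth, on an étale neighborhood of any point of $X$ one may arrange that $g$ is the projection $Y\times\mathbb{A}^d\to Y$. Pulling back to $(X/Y)^{\bn}$, which is itself smooth (as the fiber product $X^{|\bn|}\times_{Y^{|\bn|}}Y^{\ell(\bn)}$ of the smooth map $g^{|\bn|}$ with the closed embedding $\delta(\bn)$ into a smooth target), one obtains a chart of the form $Y^{\ell(\bn)}\times(\mathbb{A}^d)^{|\bn|}$ with coordinates $(y_1,\dots,y_{\ell(\bn)})$ and fiber coordinates $t_i\in\mathbb{A}^d$, so that the $i$-th $X$-point reads $x_i=(y_{g(i)},t_i)$. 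In these coordinates the relative diagonal $\Delta_{X/Y}(I)$ is the linear subspace defined by $y_j=y_{j'}$ for $j,j'\in I^{\bn}_Y$ together with $t_i=t_{i'}$ for $i,i'\in I^{\bn}_X$ --- the definition of $I^{\bn}_Y$ is arranged so that it already incorporates the $Y$-parts of the constraints $x_i=x_{i'}$ forced by $I^{\bn}_X$. Each $\Delta_{X/Y}(I)$ is therefore a smooth affine subspace of the chart.

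For the arrangement property, any nonempty intersection $\bigcap_s\Delta_{X/Y}(I_s)$ is cut out by the union of the above linear equations, and combinatorially corresponds to the equivalence relation on $[\|\bn\|]$ generated by the $I_s$. This gives another relative multidiagonal, exhibiting $\Delta_{X/Y}^\bullet$ as closed under nonempty intersection; it is a linear subspace of the chart hence smooth, and the scheme-theoretic and set-theoretic intersections coincide because after sorting variables by equivalence class the defining ideal is generated by linear forms in disjoint sets of variables.

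For the toroidal building set condition, fix $S\in\Delta_{X/Y}^\bullet\setminus\Delta_{X/Y}$ and let $B_1,\dots,B_k\subset[\|\bn\|]$ be the equivalence classes of size $\geq 2$ cutting out $S$. Then $\underline{S}$ consists of all $\Delta_{X/Y}(I)$ with $I$ contained in a single $B_a$, so its minimal elements are exactly $\Delta_{X/Y}(B_1),\dots,\Delta_{X/Y}(B_k)$. In the local chart each $\Delta_{X/Y}(B_a)$ involves only variables indexed by $B_a$; since the blocks are pairwise disjoint, these subspaces meet transversely, their scheme-theoretic intersection is $S$, and after a linear change of coordinates making each of them a coordinate subspace, $\bigcup_a \Delta_{X/Y}(B_a)$ is cut out by a product of coordinate ideals, which is a monomial ideal. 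This verifies both requirements for $\Delta_{X/Y}$ to be a toroidal building set.

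The hard part will be the bookkeeping: translating the indexing of $I\subset[\|\bn\|]$ and the definitions of $I^{\bn}_X$, $I^{\bn}_Y$ --- in particular carefully handling the empty-group labels in $[|\bn|+1,\|\bn\|]$ --- into the partition description, and verifying that an arbitrary nonempty intersection of $\Delta_{X/Y}(I_s)$'s indeed lands in $\Delta_{X/Y}^\bullet$ (rather than in some more exotic combinatorial family). Once the dictionary is set up, the geometry is exactly that of the standard big-diagonal arrangement on affine space, whose transverse-intersection building set structure is classical.
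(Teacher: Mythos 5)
The paper itself gives no proof of this lemma (the authors state in \S\ref{sec:wond-comp-} that all proofs in that section are postponed to a forthcoming publication), so I can only assess your argument on its own terms.

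The first half — local smooth chart, linearity of the relative diagonals, closure under intersection and smoothness/cleanness of the multidiagonals, hence the arrangement claim — is sound. The gap is in the building-set half, in the sentence ``In the local chart each $\Delta_{X/Y}(B_a)$ involves only variables indexed by $B_a$; since the blocks are pairwise disjoint, these subspaces meet transversely.'' That is false. The $t$-coordinates appearing in $\Delta_{X/Y}(B_a)$ are indeed indexed by elements of $B_a$ and so disjoint across blocks, but the $y$-coordinates are indexed by $(B_a)^{\bn}_Y$, the set of \emph{lines} touched by $B_a$, and disjoint blocks can touch overlapping sets of lines. Concretely, take $\bn=(2,2,2)$ with $X=\bC^2\to Y=\bC$, so $[\|\bn\|]=[6]$ is partitioned into groups $\{1,2\},\{3,4\},\{5,6\}$. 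Put $B_1=\{1,3\}$, $B_2=\{4,5\}$, $B_3=\{2,6\}$ (pairwise disjoint) and $S=\Delta_{X/Y}(B_1)\cap\Delta_{X/Y}(B_2)\cap\Delta_{X/Y}(B_3)$; one checks $\underline{S}=\{\Delta_{X/Y}(B_1),\Delta_{X/Y}(B_2),\Delta_{X/Y}(B_3)\}$. In your chart,
\begin{align}
\Delta_{X/Y}(B_1)=\{y_1=y_2,\ t_1=t_3\},\quad
\Delta_{X/Y}(B_2)=\{y_2=y_3,\ t_4=t_5\},\quad
\Delta_{X/Y}(B_3)=\{y_1=y_3,\ t_2=t_6\},
\end{align}
each of codimension $2$, while $S$ has codimension $5$, not $6$, because $(y_1-y_3)=(y_1-y_2)+(y_2-y_3)$: the intersection is not transverse. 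Worse for your monomial-ideal step: the six defining linear forms span only a $5$-dimensional space, yet the three components share no cutting equation pairwise (each pairwise intersection has codimension $4$), so there is \emph{no} linear coordinate system in which all three $\Delta_{X/Y}(B_a)$ are simultaneously coordinate subspaces. Since a radical monomial ideal cuts out a union of coordinate subspaces, $\bigcup_{G\in\underline{S}}G$ cannot be cut out by a monomial ideal in any linear chart, so the conclusion you draw from disjointness of blocks is unjustified.

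The broader point is that the argument that works for the absolute Fulton--MacPherson building set — disjoint index blocks give disjoint variables give transversality — is exactly what breaks in the relative setting because of the shared $Y$-directions, and this is why a ``toroidal'' rather than a transverse building-set notion is needed in the first place. Verifying the monomial condition (or identifying what restriction on $\Delta_{X/Y}^\bullet$ makes it hold) requires a genuinely different argument that engages with the linear dependences among the $y$-equations; it cannot be reduced to the classical disjoint-variable picture.
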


\begin{definition}
Let \(\cG\) be a nonempty toroidal building set for \(\cS\) and \(Z^\circ=Z\setminus \bigcup_{G\in \cG} G\).
The closure \(Z_\cG\) of the image of the natural locally-closed embedding
\begin{align}
Z^\circ\hra \Pi_{G\in \mathcal{G}}\Bl_GZ
\end{align}
is a \emph{toroidal wonderful compactification} of the arrangement \(\cS\).
\end{definition}

If we replace the toroidal building set in the definition above by the building set, we get a definition of the wonderful compactification from \cite{Li09}.
It is shown in \cite{Li09} that wonderful compactifications can be constructed as iterated blowups.
There is a similar construction for wonderful toroidal compactifications, and we explain it below.

Let \(\pi\colon \Bl_YZ\to Z\) be a blowup of an irreducible toroidal variety \(Z\) along \(Y\subset Z\), and let \(V\subset Z\) be a subvariety.
We define the \emph{dominant transform} \(\wt V\subset \Bl_YZ\) to be a strict transform of \(V\) if \(V\not\subset Y\) and \(\pi^{-1}(V)\) otherwise.

\begin{theorem}
Let \(Z\) be a toroidal variety and let \(\cG = \{G_1,\dots,G_N\}\) be a toroidal building set of subvarieties of \(Z\).
Let \(\mathcal{I}_i\) be the ideal sheaf of \(G_i\).
\begin{enumerate}
\item
The wonderful toroidal compactification \(Z_{\mathcal{G}}\) is isomorphic to the blow-up of \(Z\) along the ideal sheaf \(\mathcal{I}_1\mathcal{I}_2\dots
    \mathcal{I}_N\).

\item
The wonderful toroidal compactification \(Z_{\mathcal{G}}\) is a toroidal variety.

\item
If we arrange \(\{G_1,\dots,G_N\}\) in such an order that the first \(i\) terms \(G_1,\dots, G_i\) form a toroidal building set for any \(1\le i\le N\), then
\begin{align}
Z_{\mathcal{G}}=Bl_{\widetilde{G}_N}\dots Bl_{\widetilde{G}_2}Bl_{\widetilde{G}_1}Z.
\end{align}
\end{enumerate}
\end{theorem}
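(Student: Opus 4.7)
The plan is to adapt Li Li's proof in \cite{Li09} to the toroidal setting. The main observation is that a toroidal variety is étale-locally modeled on an affine toric variety, and the toroidal building set hypothesis guarantees that in such a local model each $G_i$ is cut out by a monomial ideal. Hence the entire theorem reduces, locally, to a statement about blow-ups of a toric variety along compatible monomial ideals, which can be analyzed using the combinatorics of star subdivisions of fans.

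First I would address (1). Both $Z_\cG$ and the blow-up of $\cI_1 \cdots \cI_N$ restrict to $Z^\circ$ as the identity, so it suffices to show they agree as closures of $Z^\circ$ inside a common ambient space. I would use the universal property of blowing up: the blow-up $\Bl_{\cI_1\cdots\cI_N} Z$ dominates each $\Bl_{G_i} Z$, hence yields a canonical morphism to $\prod_i \Bl_{G_i} Z$, and the image of the diagonal from $Z^\circ$ has the same closure as the graph of the inclusion $Z^\circ \hookrightarrow \prod_i \Bl_{G_i} Z$. The reverse map follows from the observation that on a local toric chart the product ideal and the (schematic) pullback of the factor ideals define the same blow-up, since blowing up a product of monomial ideals equals the iterated blow-up in a way compatible with projections.

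Next, for (2), I would argue that the blow-up of a toroidal variety along a monomial ideal is again toroidal. In a local toric chart $U_\sigma$, blowing up along $\cI_1\cdots\cI_N$ corresponds to a star subdivision of the fan of $\sigma$ by the Newton polyhedron of the product monomial ideal. The resulting variety is toric, hence toroidal, and gluing these local models yields the toroidal structure on $Z_\cG$.

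For (3), I would proceed by induction on $N$. The case $N=1$ is trivial, and the inductive step rests on the following local claim: if $\{G_1,\ldots,G_N\}$ is a toroidal building set with $\{G_1,\ldots,G_i\}$ a toroidal building set for every $i$, then after blowing up $G_1$ the dominant transforms $\{\widetilde G_2,\ldots,\widetilde G_N\}$ form a toroidal building set on $\Bl_{G_1} Z$. Granting this, part (1) applied inductively identifies the iterated blow-up with the blow-up of the product ideal, hence with $Z_\cG$. The hard part will be the inductive claim, which in the smooth case of \cite{Li09} is proved via transversality and normal-crossings arguments that do not survive unmodified in the toroidal setting. One must instead verify in local toric coordinates that the monomial ideal cutting out $\bigcup G_j$ near a point of $\widetilde S$ remains monomial after the star subdivision corresponding to $\Bl_{G_1}$; this is a combinatorial statement about fans that I expect to prove by tracking how the Newton polyhedra of the $\cI_j$ transform under the subdivision, using the ordering hypothesis to ensure that no new non-monomial components appear. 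Once this local monomial-ideal property is established, the arrangement and minimality conditions defining a toroidal building set can be checked directly from the definition.
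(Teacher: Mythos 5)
The paper gives no proof of this theorem: the introduction to \S\ref{sec:wond-comp-} explicitly notes that the section contains no proofs and that details are postponed to the forthcoming paper \cite{bo:fm}. So there is no in-paper argument to compare against; your proposal must stand on its own.

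Your Li-Li-style strategy (reduce to toric charts, exploit the local monomiality coming from the toroidal-building-set hypothesis, and track Newton polyhedra through fan subdivisions) is surely the intended one, and parts (1) and (2) are adequately outlined. The genuine gap is the inductive step in (3): that after blowing up $G_1$, the dominant transforms $\widetilde G_2,\ldots,\widetilde G_N$ again form a toroidal building set on $\Bl_{G_1}Z$. You flag this as the hard part but do not discharge it, and the difficulty deserves to be spelled out. The general identity $\Bl_{\cI\cJ}X\cong\Bl_{\cJ\cdot\cO}(\Bl_{\cI}X)$ uses the \emph{total} transform $\cJ\cdot\cO$, while (3) blows up \emph{dominant} (strict) transforms; in Li Li's smooth setting the two are reconciled via the transversality and separation properties of a building set (\cite[Prop.~2.8, Lem.~2.9]{Li09}), and the toroidal replacement is precisely the statement that the local monomial generation of $\bigcup_j\cI_j$ near points of $\widetilde S$ survives the star subdivision induced by $\Bl_{G_1}$, forcing total and strict transforms to give the same blow-up. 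Without that combinatorial verification the induction does not close, so the ``iterated blow-up'' description does not yet follow from what you have established. (A small simplification of part (1): each $\cI_i\cdot\cO_{Z_\cG}$ is an invertible ideal sheaf --- locally principal by pullback, with non-zerodivisor generators since $Z_\cG$ is integral and $Z^\circ$ is dense --- hence so is the product; this gives $Z_\cG\to\Bl_{\cI_1\cdots\cI_N}Z$ directly from the universal property of blow-up, avoiding an appeal to the iterated description that (3) is itself meant to prove.)
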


\subsection{Examples}

The main examples of the  wonderful toroidal compactifications come from the arrangements discussed above.

\begin{enumerate}
\item
Suppose \(X\), \(Y\), and the map \(g\colon X\to Y\) are smooth.
Then \((X/Y)[\bn]\coloneqq(X/Y)^\bn_{\Delta_{X/Y}}\) is a toroidal variety.
If \(g\) is an identity map and \(\bn=\{1^n\}\) then \((X/Y)[\bn]\) is the Fulton--MacPherson space \(X[n]\).

\item
Under the same assumption as before, the compactification \((X/Y)\langle \mathbf{n}\rangle:=(X/Y)_{\Delta_{X/Y}^\bullet}\) is actually smooth and provides a resolution of the variety \((X/Y)[\bn]\).
If \(g\) is the identity map, then the space \((X/Y)\langle \bn\rangle\) is Ulyanov's polydiagonal compactification of \(X^n\).
\end{enumerate}

To  connect our constructions with the objects discussed in the rest of this paper, we observe that the map \(\pi_Y\) induces a map of varieties \(\pi_Y\colon (X/Y)[\mathbf{n}]\to Y^{\ell(\mathbf{n})}\).
Similarly, we define \(\pi_X\colon (X/Y)[\bn]\to X^{|\\bn|}\).
For a point \(y\in Y\) we denote by
\(\Delta_Y(y)\in Y^{\ell(\bn)}\) the corresponding point on the small diagonal.
Similarly, we define \(\Delta_X(x)\in X^{|\bn|}\) for \(x\in X\).
We denote by \((X/Y)[\bn]_x\) the preimage \(\pi_X^{-1}(\Delta_X(x))\cap \pi_Y^{-1}(\Delta_Y(g(x)))\).

\begin{theorem}
Let \(X=\bC^2\) and \(g\colon \bC^2\to \bC\) be the linear projection.
Then
\begin{align}
(\bC^2/\bC)[\bn]_o=\ol{2M}_\bn^\bC.
\end{align}
Here \(o=(0,0)\).
\end{theorem}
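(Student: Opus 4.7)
The plan is to show that $(\bC^2/\bC)[\bn]_o$ and $\ol{2M}_\bn^\bC$ carry matching $W_\bn^\bC$-stratifications with isomorphic local toric charts on each stratum, and that these local isomorphisms glue to a global isomorphism of proper varieties.

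First, I would identify the open stratum on both sides. In $(\bC^2/\bC)^\bn$, the complement of the union of all proper relative diagonals is the configuration space of $|\bn|$ points in $\bC^2$ distributed among $\ell^+(\bn)$ distinct vertical lines in the pattern prescribed by $\bn$. On the fiber over $o$, after accounting for the reparametrization action that is implicit in the wonderful blowup, this open locus becomes precisely the moduli space of smooth stable plane-trees of type $\bn$ in the sense of Def.~\ref{def:SWC}. This agrees with the top stratum $\ol{2M}_{\bn,2T_\bn^\top}$ of $\ol{2M}_\bn^\bC$, which is dense by the analogue for $\ol{2M}_\bn^\bC$ of the density argument in Lemma~\ref{lem:Mr_variety}.

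Second, I would match the two stratifications. The strata of the toroidal wonderful compactification $(\bC^2/\bC)[\bn]_o$ correspond to nested collections of relative multidiagonals whose common intersection contains $(\Delta_X(o),\Delta_Y(0))$. Each relative diagonal records either a collision of vertical lines (a 1-bracket $B \subset \{1,\ldots,r\}$) or a collision of marked points on a single line (a 2-bracket $(B,(2B_i))$), and the condition of being compatible with the toroidal building set is exactly the {\sc (1-bracketing)}, {\sc (2-bracketing)}, and {\sc (marked seams are unfused)} conditions in Def.~\ref{def:Wn_br}. This yields a bijection between strata of $(\bC^2/\bC)[\bn]_o$ and $W_\bn^\bC$.

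Third, I would verify that the local toric charts agree. At a stratum of $(\bC^2/\bC)[\bn]_o$ indexed by $2T \in W_\bn^\bC$ with $d(2T) = 0$, the iterated-blowup description of the wonderful compactification realizes the local model as $\Spec$ of a monoid algebra. An explicit generator-by-generator matching with the canonical generators of $L_{2T}$ from Def.~\ref{def:can_gens} and Lemma~\ref{lem:can_gens} would identify this monoid with $\bZ_{\geq0}^N/L_{2T}$, giving an isomorphism of the local toric chart with $2X_{2T}$ as in \eqref{eq:2X_2T_as_monoid_algebra}. These local isomorphisms are compatible with the transition maps of Lemma~\ref{lem:2Mn_transitions_algebraic}, and since both spaces are proper (Lemma~\ref{lem:2Mn_sep_proper} for $\ol{2M}_\bn^\bC$ and the general properness of wonderful toroidal compactifications for the other side), these local isomorphisms glue to the desired global one.

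The main obstacle will be the third step. Unwinding the iterated blowup description to read off an explicit local ring at a chosen deepest stratum requires tracking, at each successive blowup, how the strict transforms of the still-unblown relative diagonals interact with the newly created exceptional divisors; the monoid presentation that emerges is not immediately in the form of Def.~\ref{def:can_gens}. An induction over the combinatorics of $2T$ — matching the order in which relative diagonals are blown up with the inorder traversal of $T_b$ used in Def.~\ref{def:can_gens} — will be needed to bring the presentation into canonical form and conclude the stratum-by-stratum isomorphism.
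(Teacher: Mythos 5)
The paper does not actually prove this theorem. The opening of \S\ref{sec:wond-comp-} states ``This section does not contains any proofs since we postponing the details for the forth-coming publication,'' and \S\ref{sec:warm-up} explicitly defers both the strata correspondence and the chart-level comparison for $\ol{2M}_\bn^\bC$ to the companion paper \cite{bo:fm}. So there is no paper proof against which to compare your proposal; I can only assess it on its own terms and against the two sketches the paper does include (\S\ref{sec:singular-example} and \S\ref{sec:warm-up}).

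With that caveat, your three-step plan is aligned with what the authors appear to intend: \S\ref{sec:singular-example} is a worked instance of your Step~3 at one deepest stratum of $\ol{2M}_{4,0}^{\bC}$, and \S\ref{sec:warm-up} carries out a version of your Step~2 for the one-dimensional case $\ol M_r^\bC$. Two places where your outline is slightly off or underspecified. First, in Step~2 you say ``each relative diagonal records either a collision of vertical lines (a 1-bracket) or a collision of marked points on a single line (a 2-bracket)''; in fact a single relative diagonal $\Delta_{X/Y}(I)$, for $I \subset [\|\bn\|]$, always encodes a $Y$-collision \emph{and} an $X$-collision at once, via $I^\bn_Y$ and $I^\bn_X$, so it corresponds to the data of a single $\bC$-type 2-bracket of Def.~\ref{def:2bracket} rather than to an ``either/or.'' The genuine combinatorial content of Step~2 is then the claim that $\cG$-nests over the point $o$ are precisely the $\bC$-type 2-bracketings, and in particular that the nest condition reproduces {\sc (marked seams are unfused)} from Def.~\ref{def:Wn_br}; that match is not routine and deserves a lemma of its own. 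Second, ``the general properness of wonderful toroidal compactifications'' does not apply to $(\bC^2/\bC)[\bn]$, which is not proper since $\bC^2$ is not; properness of the punctual fiber $(\bC^2/\bC)[\bn]_o$ instead follows from properness of the blowup morphism $(\bC^2/\bC)[\bn] \to (\bC^2/\bC)^\bn$ together with the fact that the fiber over $o$ is a closed subset of a proper preimage. You correctly identify Step~3 as the crux: extracting the local monoid from the iterated blowup and matching it against the canonical generators of $L_{2T}$ from Def.~\ref{def:can_gens} is exactly the bookkeeping the paper postpones to \cite{bo:fm}, and your proposed induction keyed to inorder traversal of $T_b$ is a reasonable way to organize it, but as written this step remains a plan rather than a proof.
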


Similarly, we define the punctual version of the relative \fm spaces \((X/Y)[\bn]_o\), \(o\in X\) for a smooth map \(g\colon X\to Y\) as well as the punctual version of Ulyanov space \((X/Y)\langle \bn\rangle_o\).
In our forthcoming paper \cite{bo:fm} we prove the following theorem.
\begin{theorem}
The space \((X/Y)[\bn]_o\) is a normal lci variety with at most toric singularities, and the natural blowdown map \((X/Y)\langle \mathbf{n}\rangle_o\to (X/Y)[\mathbf{n}]_o\) is a resolution of singularities.
\end{theorem}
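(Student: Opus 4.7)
The plan is to reduce to the case already handled in this paper and then extend the atlas-based arguments of \S\ref{s:Wn-bar}. Since \(g\colon X\to Y\) is smooth of some relative dimension \(d\), a standard étale local triviality result produces an étale neighborhood \(U\) of \(o\in X\), an étale neighborhood \(V\) of \(g(o)\in Y\), and an étale morphism \(U\to\bC^d\times V\) over \(V\) that trivializes \(g\). Because the wonderful toroidal compactification of \(\S\)\ref{sec:toro-wond-comp} is functorial for étale base change of the ambient variety \((X/Y)^\bn\), both \((X/Y)[\bn]_o\) and \((X/Y)\langle\bn\rangle_o\) pull back in the expected way, so it suffices to treat the case of a linear projection \(g\colon\bC^d\times\bC^e\to\bC^e\), \(o=(0,0)\).

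Next I would construct an explicit atlas on \((X/Y)[\bn]_o\) by generalizing Def.~\ref{def:2Mn_local_model} and Def.~\ref{def:2Mn_chart}. For each sliced \(2T\in W_\bn^\bC\) with \(d(2T)=0\), one mimics the local model \(2X_{2T}\) but now with \(\bC\)-valued coordinates for the lines replaced by \(\bC^e\)-valued coordinates for horizontal positions and \(\bC\)-valued coordinates for vertical positions replaced by \(\bC^d\)-valued ones. The binomial coherences \eqref{eq:2X_2T-tilde_coherences} on the gluing parameters \((b_\rho,a_\alpha)\) are intrinsic to the tree-pair combinatorics and survive verbatim; the extra positional coordinates enter the model as a smooth factor. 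Concretely, the new local model takes the shape \(2X_{2T}\times\bC^{\text{smooth}}\), which is enough to conclude that \((X/Y)[\bn]_o\) has an atlas in which each chart is the product of an affine toric variety with affine space. Normality and reducedness then follow immediately from Lemma~\ref{lem:2X_2T_reduced_normal}, since these properties are preserved by taking a product with a smooth scheme. For the lci property I would count: the sublattice \(L_{2T}\subset\bZ^N\) generated by the canonical generators of Def.~\ref{def:can_gens} has rank equal to \(N-\dim 2X_{2T}\) and is freely generated by \(N-\dim 2X_{2T}\) binomials, so \(2X_{2T}\) is cut out in affine space by exactly the right number of binomials and is therefore a complete intersection.

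For the resolution statement, the point is that \((X/Y)\langle\bn\rangle_o\) is the toroidal wonderful compactification with respect to the full arrangement \(\Delta^\bullet_{X/Y}\) of relative multidiagonals, whereas \((X/Y)[\bn]_o\) uses only the toroidal building set \(\Delta_{X/Y}\). Since \(\Delta_{X/Y}\subset\Delta^\bullet_{X/Y}\) as (toroidal) building sets, the theorem of \S\ref{sec:toro-wond-comp} realizes the second as an iterated blowup of the first along dominant transforms of the additional multidiagonals, giving a proper birational morphism \((X/Y)\langle\bn\rangle_o\to(X/Y)[\bn]_o\). It remains to verify smoothness of \((X/Y)\langle\bn\rangle_o\). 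Locally, after iterated blowup, the toric defining equations of \(2X_{2T}\) are replaced by a subdivision of the corresponding affine cone; I would check case-by-case (using the canonical generator picture) that blowing up the multidiagonals introduces precisely the rays needed to make the cone simplicial and unimodular, hence that every local model becomes smooth.

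The hard part will be the last step: producing an explicit subdivision argument showing that the iterated blowup along the remaining multidiagonals resolves the toric singularities of \(2X_{2T}\). The combinatorics of Def.~\ref{def:can_gens} suggests that the extra multidiagonal loci correspond, on the level of cones, to the ``missing'' rays that make the quotient lattices \(\bZ_{\geq 0}^N/L_{2T}\) unimodular. Formalizing this correspondence — essentially translating between the tree-pair combinatorics of \(W_\bn^\bC\) and the fan combinatorics of the local toric chart — will require some care, and is deferred to \cite{bo:fm} as indicated in the text.
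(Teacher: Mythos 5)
The paper does not in fact prove this theorem: \S\ref{sec:wond-comp-} states explicitly that ``This section does not contains any proofs since we postponing the details for the forth-coming publication,'' and the sentence immediately preceding the theorem statement says it is proved in \cite{bo:fm}. So there is no argument in the text against which to compare your proposal; it must stand alone.

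Your opening moves are reasonable: an \'{e}tale local trivialization of the smooth map $g$ to reduce to a linear projection, and the observation that the extra positional coordinates in $\bC^d\times\bC^e$ should contribute a smooth product factor to the local models. However, the lci step has a genuine gap. You infer, from the fact that $L_{2T}\subset\bZ^N$ is freely generated by the $N-\dim 2X_{2T}$ canonical generators of Def.~\ref{def:can_gens}, that $2X_{2T}$ is cut out by $N-\dim 2X_{2T}$ binomials and hence is a complete intersection. That inference is false in general: the lattice ideal of a rank-$r$ sublattice of $\bZ^N$ is the \emph{saturation} of the ideal generated by the $r$ binomials corresponding to a lattice basis, and this prime ideal can require strictly more than $r$ generators. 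The affine cone over the twisted cubic is the standard counterexample: its lattice has rank two, but the prime toric ideal needs three quadrics, so it is not an lci. What you would need is an argument specific to $L_{2T}$ --- for instance, that the binomials coming from the coherences \eqref{eq:2X_2T-tilde_coherences} already generate a prime ideal (equivalently, that the associated lattice ideal needs no further saturation and is generated by a basis), or a direct Koszul-regularity argument for the defining equations. Lemma~\ref{lem:2X_2T_reduced_normal} shows only that $L_{2T}$ and the monoid $\bZ_{\geq0}^N/L_{2T}$ are saturated, i.e.\ reducedness and normality; it does not touch the complete-intersection claim.

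For the resolution half, you correctly locate the mechanism --- the inclusion of toroidal building sets $\Delta_{X/Y}\subset\Delta_{X/Y}^\bullet$, which by the theorem of \S\ref{sec:toro-wond-comp} realizes $(X/Y)\langle\bn\rangle_o$ as an iterated blowup of $(X/Y)[\bn]_o$ --- but you explicitly stop short of the central verification, namely that blowing up the extra multidiagonals renders every local toric cone simplicial and unimodular. Since that computation is precisely the content of the second assertion of the theorem, deferring it means the proposal does not actually prove the resolution statement.
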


\subsection{A singular example}
\label{sec:singular-example}

In this subsection we analyze the smallest singular example of the variety \((\bC^2/\bC)[\bn]\).
To be more precise, the argument of this subsection could be used to show that the space \((\bC^2/\bC)[4,0]_o\) has only three singular points and that these points are simple quadratic singularities.

The  relative \fm space in question is a blowup along the ideal of the union of the union of the relative diagonals:
\begin{align}
(\bC^2/\bC)[4,0]\subset \bC^6\times \prod_{S\subset [5]}\bP_S^{|S|-2}.
\end{align}
We use notation for the projection \(\pi_\cS\colon (\bC^2/\bC)[4,0]\to \bC^6\times \prod_{S\in \cS}\bP_S^{|S|-2}\), where
\(\cS\) is a collection of subsets inside \([5]\). 

To  discuss the scheme structure of this variety we introduce coordinates on the space \(\bC^6\) in the product:
the heights of the four points on the first line are \(y_1,y_2,y_3,y_4\) and the positions of the two lines are \(x_1,x_5\).
In particular, two relative diagonals \(\Delta_{\bC^2/\bC}(1,2,5)\) and \(\Delta_{\bC^2/\bC}(3,4,5)\) are defined by:
\begin{align}
y_{12}=0,x_{15}=0\quad y_{34}=0, x_{15}=0.
\end{align}

These two relative diagonals are of codimension \(2\), and their intersection is of codimension \(3\).
Thus the intersection is not transverse; it also is not a diagonal.
Let us define an open subset \(U\subset \bC^6\) by the conditions \(y_i\ne y_j\), \(i\in \{1,2\}\), \(j\in \{3,4\}\).
Respectively, the \(\wt U\subset (\bC^2/\bC)[4,0]\) is the preimage of \(U\) under the projection map \(\pi_\emptyset\).

The projection \(\pi_{\{1,2,5\},\{3,4,5\}}\) is an isomorphism when restricted to the open subset \(\wt U\).
The image of the projection is cut out by the following equations:
\begin{align}
\xi_0x_{15}=\xi_1y_{12},\quad \xi'_0x_{15}=\xi'_1y_{34}.
\end{align}

Hence the affine chart \(\xi_0\ne 0\), \(\xi'_0\ne 0\) of the image is isomorphic to the hypersurface inside \(\bC^4\times \bC^3=\bC^7\) defined by the equation \(zy_{12}=wy_{34}\) where \(z=\xi_1/\xi_0\), \(w=\xi'_1/\xi'_0\), \(y_{12},y_{34}\) are the coordinates along the first factor of \(\bC^7\) and \(y_1,x_1\) and \(y_{23}\) are the coordinates along the last factor.

The previous discussion implies that that the subspace \((\bC^2/\bC)[4,0]_o\) could not be smooth.
To describe the singularity in  more details, we recall that \((\bC^2/\bC)[4,0]\) is defined in \(\bC^6\times \prod_{S\subset [5]}\mathbb{P}_S^{|S|-2}\) by the following equation:
\begin{align}
\label{eq:gl-eq}
\xi^{S}_{ij}w_{kl}=\xi^S_{kl}w_{ij},
\end{align}
where \(\xi_{ij}^S\) are homogeneous coordinates on \(\bP^{|S|-2}_S\) that satisfy the relations \(\xi_{ij}^S+\xi_{jk}^S=\xi_{ik}^S\), \(\xi_{ii}^S=0\), and \(w_{ij}=y_i-y_j\), \(w_{i5}=x_1-x_5\), \(i,j\in[4]\) satisfy similar relations.

In these coordinates we define an open subset \(V\subset (\bC^2/\bC)[4,0]\) by the inequalities
\begin{align}
\label{eq:open}
\xi^S_{ij}\ne 0, \quad i\in \{1,2\}, j\in\{3,4\}, \quad \xi_{12}^{125}\ne 0, \xi_{34}^{345}\ne 0.
\end{align}
Using \eqref{eq:gl-eq} we express \(\xi^S_{ij}\), \(x_i,y_i\) in terms of \(x_{12},y_{23},x_1,y_1\) and \(\xi^{[4]}_{ij}/\xi^{[4]}_{23}\), \(\xi^{1,2,5}_{15}/\xi^{1,2,5}_{12}\), \(\xi^{3,4,5}_{35}/\xi^{3,4,5}_{34}\) and the expressions are regular on the open subset defined by the inequalities (\ref{eq:open}).
Thus the projection of \(V\subset (\bC^2/\bC)[4,0]\) to \(\mathbb{P}_{125}^1\times\mathbb{P}_{345}^1\times\mathbb{P}^2_{1234}\times \bC^4\), where the coordinates along the last factor are \(x_1,y_1,x_{12},y_{23}\), is an isomorphism.

Finally, let us observe that the above projection sends the subvariety \((\bC^2/\bC)[4,0]_o\cap V\) to the locus defined by
\begin{align}
x_{12}=y_{23}=x_1=y_1=0, \quad \xi^{125}_{15}\xi^{[4]}_{12}=\xi^{345}_{35}\xi^{[4]}_{34}.
\end{align}
Thus the point of the locus with \(\xi_{15}^{125}=\xi_{12}^{[4]}=\xi_{35}^{345}=\xi_{34}^{[4]}=0\) is an isolated quadratic singularity.

This singularity matches with the singularity of the moduli space \(\ol{2M}_{4,0}^{\bC}\) that was studied in Ex.~\ref{ex:reduced_and_normal}.

\subsection{Strata structure} 
\label{sec:warm-up}

In this subsection we match the natural stratification of the spacre \(\ol{M}_r^\bC\) by the open strata \(Z_T\) with a natural stratification of the blow-up descrioptiuon of this moduli space.
The analogue if this match for \(\ol{2M}_n\) as well as discussion of the charts around these strata is discussed in our next publication \cite{bo:fm}.


The wonderful compactification for the space  \(\ol{M}_r^\bC\) relies on the properties of the space \(\bC^1[r]\), and we provide an explicit construction for the space below.
The ambient space for it is the product \(X_\amb=\bC^{r}\times \prod_{S\subset [r]}\bP^{|S|-2}_S\).
The coordinates on the first factor we set to be \(z_i\), \(i\in [r]\), and the homogeneous coordinates on \(\bP^{|S|-2}_S\) are \(\xi_{ij}^S\), \(i,j\in S\) modulo the natural linear relations \(\xi^S_{ij}=-\xi_{ji}^S\), \(\xi^S_{ij}+\xi^S_{jl}=\xi^S_{il}\).
We also introduce coordinates \(\xi^S_i\), \(i\in S\), which are defined up to an affine transformation.
Similarly, we use the notation \(z_{ij}=z_i-z_j\).

The defining ideal of the space is constructed in two steps.
We define the ideal \(I'\subset R=\bC[x,\xi]\) as an ideal generated by the following elements:
\begin{align}
z_{ij}\xi^{S}_{kl}-z_{kl}\xi_{ij}^{S}.
\end{align}
The localized ideal \(I_{loc}\) is the ideal inside \(\bC(z)\otimes \bC[\xi]\) and is defined as \(I'\otimes_R\bC(z)\otimes \bC[\xi]\).
In the second step we construct the  saturated ideal \(I_\sat=I'\cap \bC[z,\xi]\) and the defining ideal of our space is defined as the restriction \(I=I_\sat|_{z_1=\dots=z_r=0}\).
That is,
\begin{align}
\bC^1[r]=\mathrm{Spec}(\bC[z,\xi]/I_{sat}),\quad \ol{M}_r^{\bC}=\mathrm{Spec}(\bC[\xi]/I).
\end{align}

\begin{example}
If \(r=4\) then \(X_\amb=\bC^4\times\bP^2\times \bP^1\times \bP^1\times \bP^1\times \bP^1\) and as a result of the above construction one obtains a description of \(\ol M_4^\bC\subset \bP^2\times\bP^1\times\bP^1\times\bP^1\times\bP^1\) as a blowup of \(\bP^2=\bP_{S}^2\), \(S=\{1,2,3,4\}\) at the four points \(p_{S',S''}=\{\xi^S_{S'}=\xi^S_{S''}=0\}\), \(S'\cup S''=\{1,2,3,4\}\), \(|S'|=|S''|=2\).
\end{example}

Now let us compare the above description with the  model from the previous sections.
Let us recall the  sets \(X_T\) in the definition \ref{def:Mr_local_model} stratify the space \(\ol M_r^\bC\).
Let \(\sB\) the 1-bracket corresponding to \(T\).
Then the closure of the set \(\ol{X_T}\) in the wonderful compactification model is defined by the following collection of equations.

There is a natural correspondence between the vertices of the tree \(T\) and the sets in \(\sB\): \(v\mapsto S_v\in \sB\).
Since the leaves of the tree \(T\) are labeled with integers from \(1\) to \(r\), descend along the edges of \(T\) provides us with the map \(\phi^T_v: S_v\to \incom(v)\).
Thus the defining ideal \(I_{Z_T}\) of \(\ol{X_T}\) is generated by:
\begin{align}
\xi^S_i-\xi^S_j, \quad S=S_v\in \sB, \quad \phi^T_v(i)=\phi^T_v(j).
\end{align}

The analogue of the strata correspondence together with the description of the analogue of the atlas from the section \ref{ss:2Mn-bar_charts} for the relative \fm spaces will appear in the sequel to this paper.


\appendix

\section{Examples}
\label{s:examples}

In this appendix, we exhibit all 1- resp.\ 2-dimensional instances of $\ol{2M}_\bn^\bC$ as $\bP^1$ resp.\ blowups of $\bP^1 \times \bP^1$.

\begin{figure}[H]
\centering
\def\svgwidth{0.9\columnwidth}
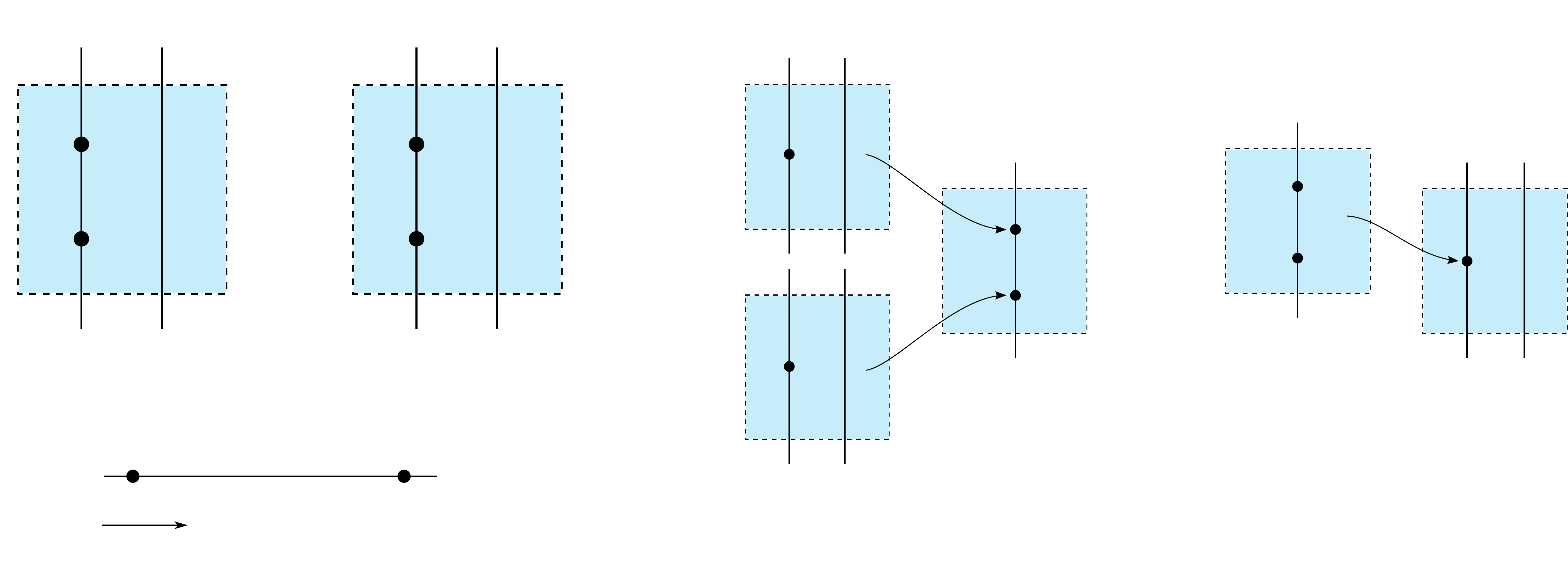
\caption{
In this figure, we demonstrate why $W_{20}^\bC$ is isomorphic to $\bP^1$.
First, we consider a particular local slice for the open stratum, which identifies this locus with $\bC \setminus \{0\} \ni z$.
Next, we record the two codimension-1 strata.
We see that these strata correspond to the missing points $z=0$, $z=\infty$.
}
\end{figure}

\begin{figure}[H]
\centering
\def\svgwidth{0.7\columnwidth}
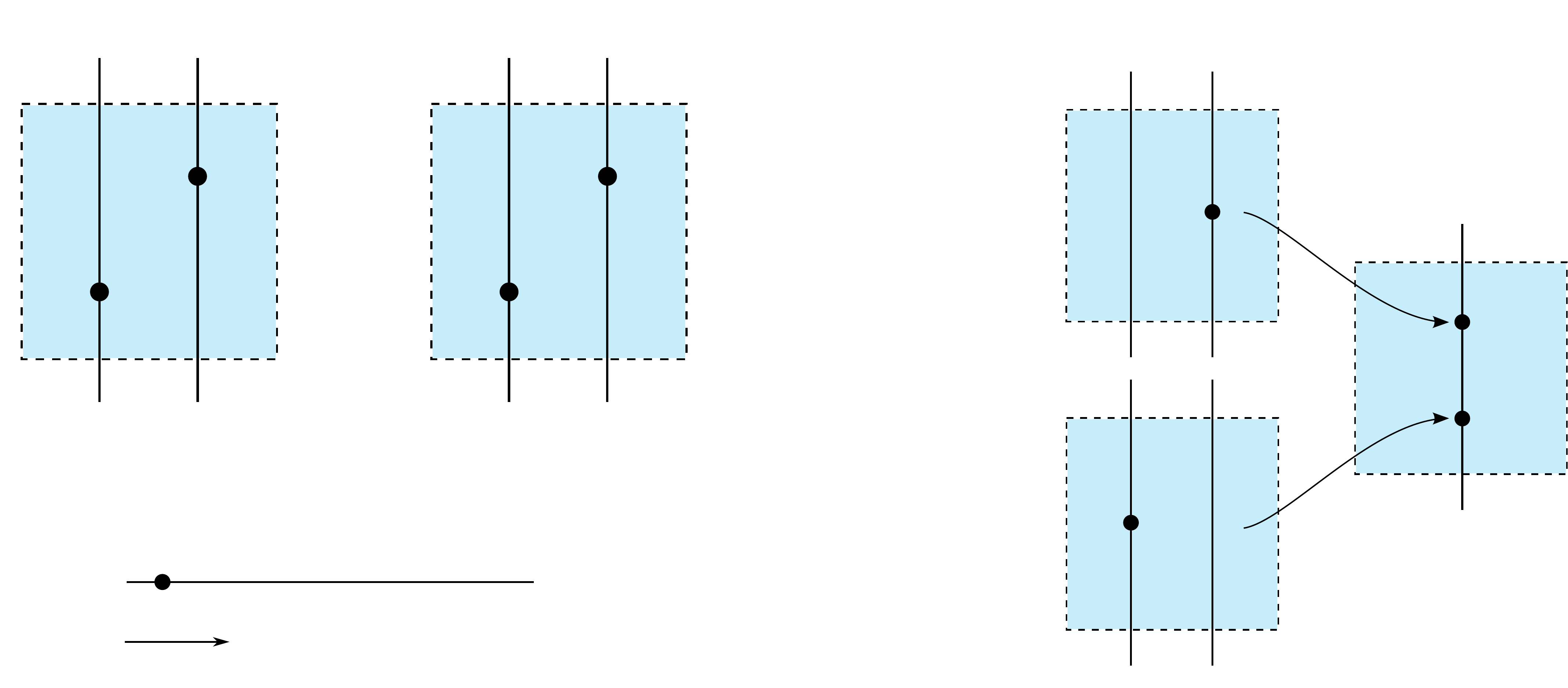
\caption{
In this figure, we demonstrate why $W_{11}^\bC$ is isomorphic to $\bP^1$.
First, we consider a particular local slice for the open stratum, which identifies this locus with $\bC \ni z$.
Next, we record the one codimension-1 stratum.
We see that this stratum corresponds to the missing point $z=\infty$.
}
\end{figure}

\newpage

\begin{figure}[H]
\centering
\captionsetup{width=\columnwidth}
\def\svgwidth{1.0\columnwidth}
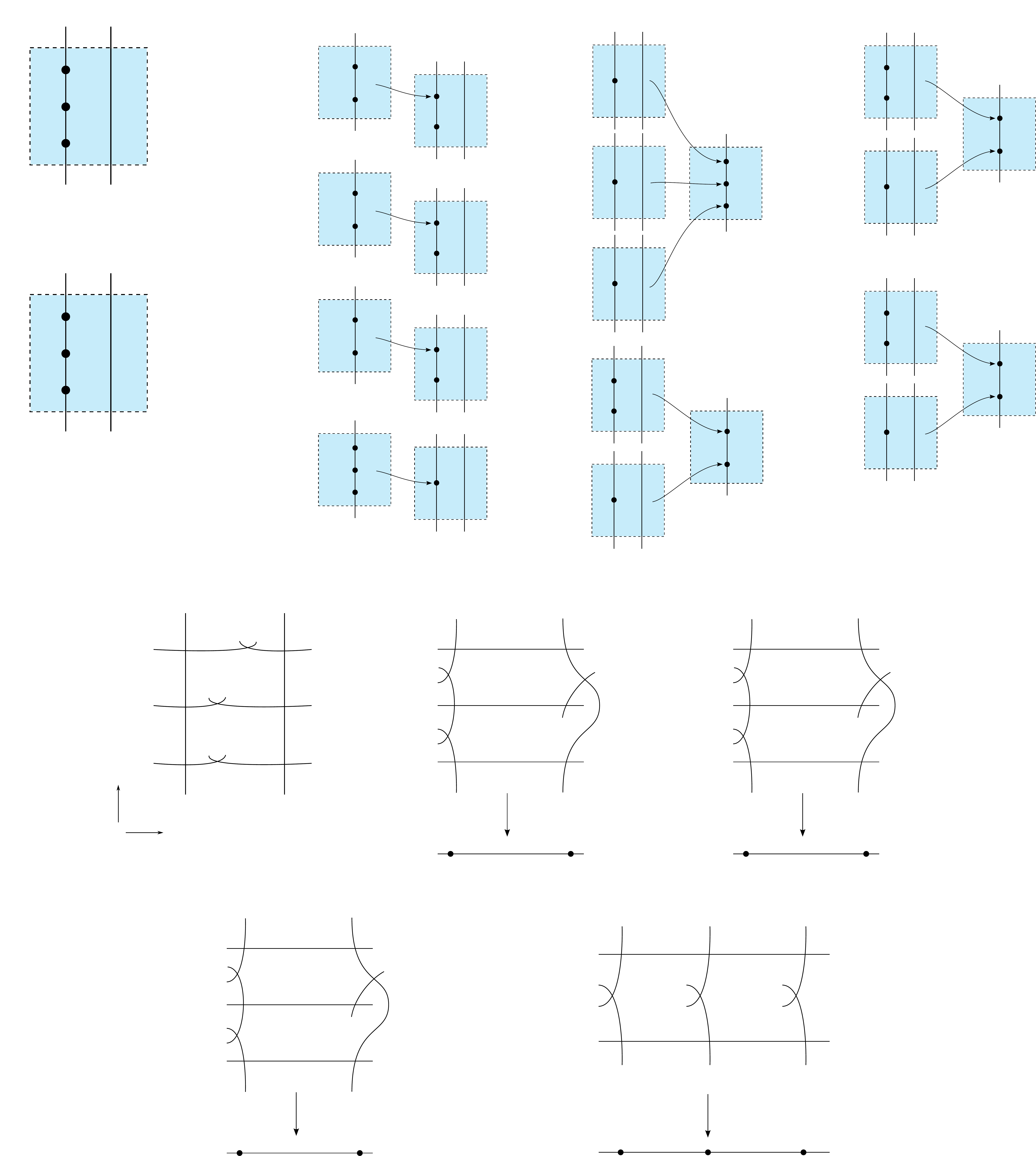
\caption{
In this figure, we demonstrate why $W_{30}^\bC$ is isomorphic to $\Bl_{(0,0),(0,1),(\infty,\infty)} (\bP^1 \times \bP^1)$.
First, we consider a particular local slice for the open stratum, which identifies this locus with $(\bC\setminus\{0\})\times(\bC\setminus\{0,1\}) \ni (z,w)$.
Next, we record the eight codimension-1 strata.
We see that five of these strata ($E$, $D$, $B$, $A$, $G$) correspond to $z=0$, $z=\infty$, $w = 0$, $w=1$, $w=\infty$, and three ($H$, $F$, $C$) correspond to the exceptional divisors arising from blowup at the points $(0,0)$, $(0,1)$, $(\infty,\infty)$.
Finally, we illustrate the forgetful maps to $\bCP^1$ that result from forgetting either $p$, $q$, $r$, or the line carrying no marked points.
}
\end{figure}

\newpage

\begin{figure}[H]
\centering
\def\svgwidth{1.0\columnwidth}
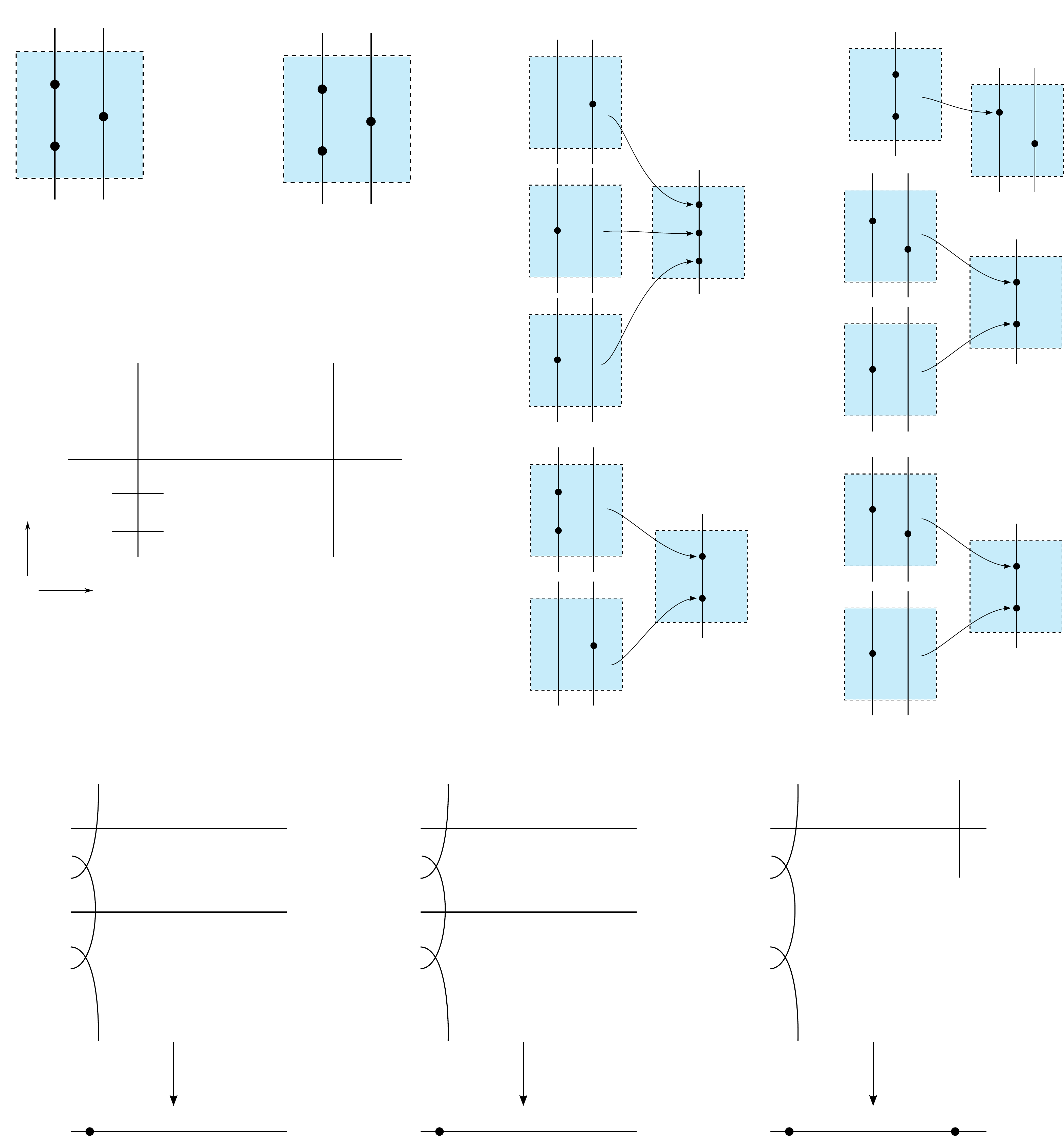
\caption{
In this figure, we demonstrate why $W_{21}^\bC$ is isomorphic to $\Bl_{(0,0),(0,1)} (\bP^1 \times \bP^1)$.
First, we consider a particular local slice for the open stratum, which identifies this locus with $(\bC\setminus\{0\})\times\bC \ni (z,w)$.
Next, we record the eight codimension-1 strata.
We see that five of these strata ($B$, $A$, $C$) correspond to $z=0$, $z=\infty$, $w=\infty$, and two ($D$, $E$) correspond to the exceptional divisors arising from blowup at the points $(0,0)$, $(0,1)$.
Finally, we illustrate the forgetful maps to $\bCP^1$ that result from forgetting either $p$, $q$, or $r$.
}
\end{figure}

\newpage

\begin{figure}[H]
\centering
\def\svgwidth{0.95\columnwidth}
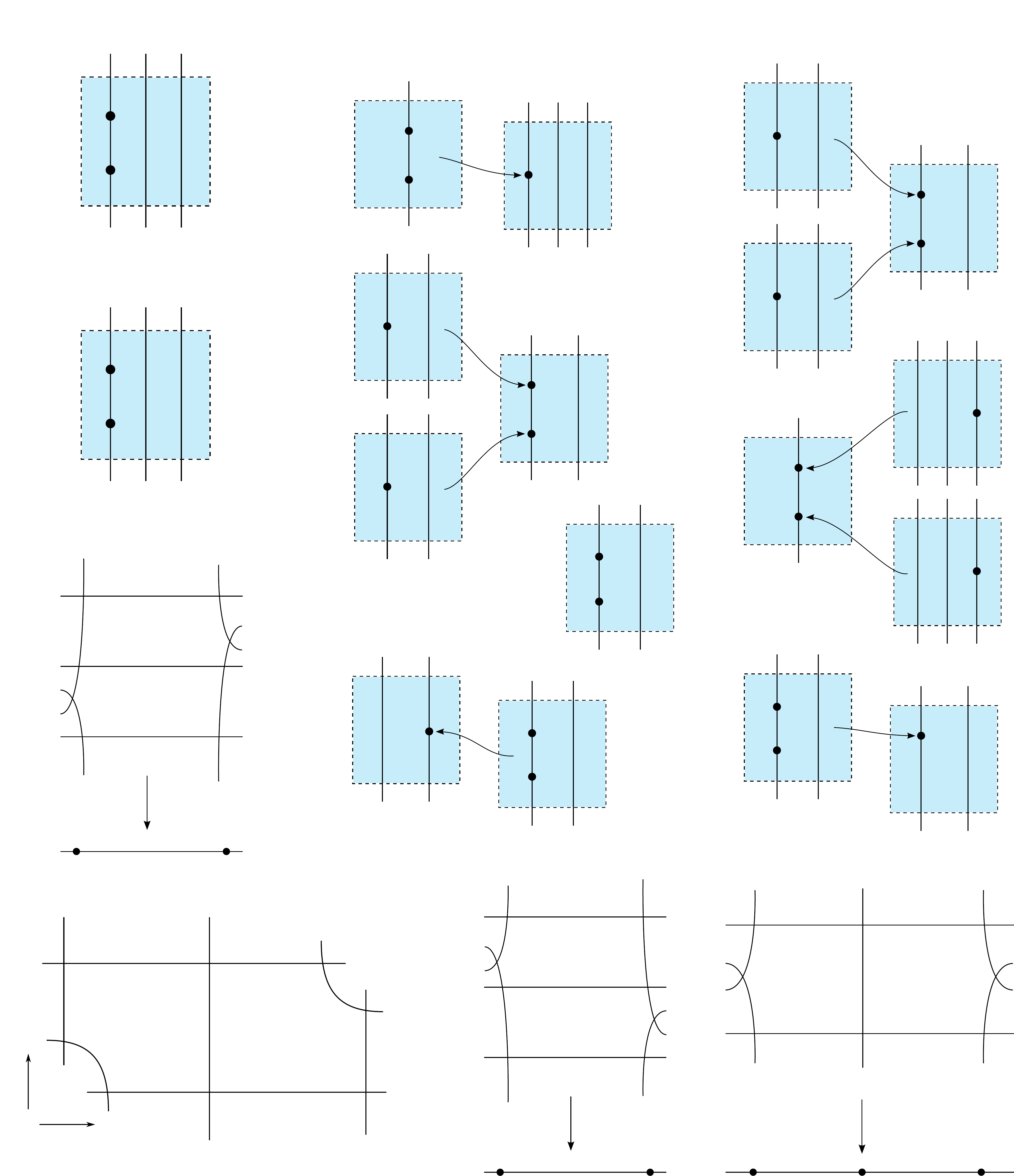
\caption{
In this figure, we demonstrate why $W_{200}^\bC$ is isomorphic to $\Bl_{(0,0),(\infty,\infty)} (\bP^1 \times \bP^1)$.
First, we consider a particular local slice for the open stratum, which identifies this locus with $(\bC\setminus\{0,1\})\times(\bC\setminus\{0\}) \ni (z,w)$.
Next, we record the seven codimension-1 strata.
We see that five of these strata ($D$, $G$, $C$, $A$, $F$) correspond to $z=0$, $z=1$, $z=\infty$, $w=0$, $w=\infty$, and two ($E$, $B$) correspond to the exceptional divisors arising from blowup at the points $(0,0)$, $(\infty,\infty)$.
Finally, we illustrate the forgetful maps to $\bCP^1$ that result from forgetting a point $p$ or $q$, or a line $K$ or $R$.
}
\end{figure}

\newpage

\begin{figure}[H]
\centering
\def\svgwidth{1.0\columnwidth}
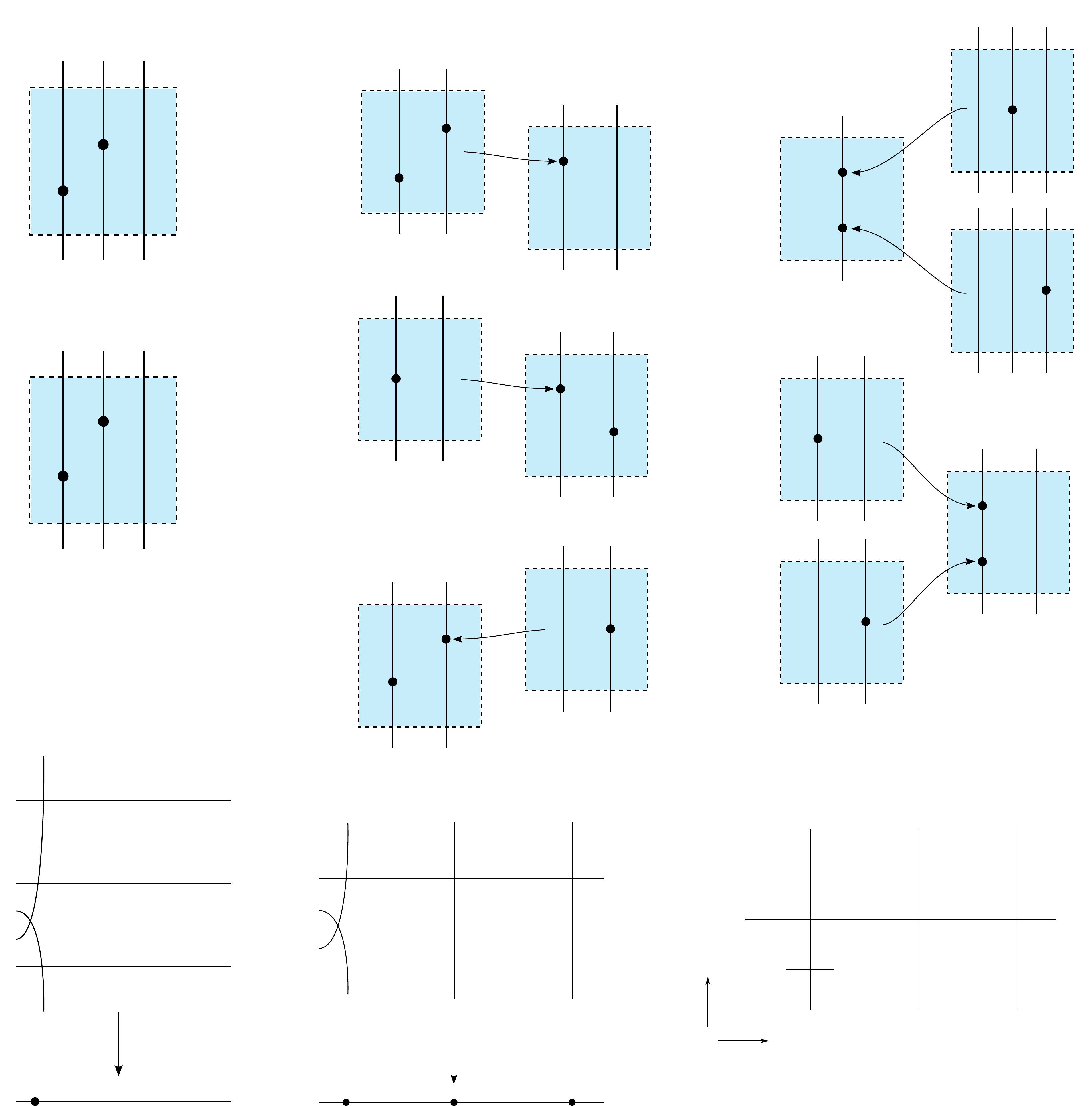
\medskip
\caption{
In this figure, we demonstrate why $W_{110}^\bC$ is isomorphic to $\Bl_{(0,0)} (\bP^1 \times \bP^1)$.
First, we consider a particular local slice for the open stratum, which identifies this locus with $(\bC\setminus\{0,1\})\times\bC \ni (z,w)$.
Next, we record the five codimension-1 strata.
We see that four of these strata ($B$, $D$, $C$, $E$) correspond to $z=0$, $z=1$, $z=\infty$, $w=\infty$, and one ($A$) corresponds to the exceptional divisor arising from blowup at the point $(0,0)$.
Finally, we illustrate the forgetful maps to $\bCP^1$ that result from forgetting a point $p$ or $q$, or the line $R$.
}
\end{figure}

\newpage

\section{An algorithm to compute the virtual Poincar\'{e} polynomial}
\label{s:vpp}

In this appendix, we explain a recursive algorithm for computing the virtual Poincar\'{e} polynomial of $\ol{2M}_\bn^\bC$.
The stratification of $\ol{2M}_\bn^\bC$ will play a crucial role.

Recall that we can associate to any complex variety $X$ its \emph{virtual Poincar\'{e} polynomial} $P[X](t)$, which can be constructed by considering a mixed Hodge structure on the compactly-supported cohomology of $X$ (see \cite{fulton:toric}).
$P[X]$ has the following properties:
\begin{itemize}
\item
If $X$ is smooth and projective, then $P[X]$ is the ordinary Poincar\'{e} polynomial:
\begin{align}
\label{eq:PP_VPP}
P[X](t)
=
\sum_i \rk H^i(X) \cdot t^i.
\end{align}

\item
If $Y \subset X$ is closed and algebraic, then the following identity holds:
\begin{align}
\label{eq:VPP_decomposition}
P[X](t)
=
P[Y](t) + P[X\setminus Y](t).
\end{align}
\end{itemize}
In this subsection, we will explain an algorithm for recursively computing $P[\ol{2M}_\bn^\bC]$, which we have implemented in \textsc{Python} (see \cite{bo:program}).
In Table~\ref{tab:VPP}, we demonstrate this algorithm by listing the virtual Poincar\'{e} polynomials of the 2- and 3-dimensional instances of $\ol{2M}_\bn^\bC$.
On a laptop computer, \cite{bo:program} returns the virtual Poincar\'{e} polynomial of a 6- resp.\ 7-dimensional instance of $\ol{2M}_\bn^\bC$ in $\sim$2 resp.\ $\sim$10 seconds.

We begin by describing the idea of our algorithm.
For any $m$-dimensional complex variety $X$, to compute $P[X]$ it suffices by \eqref{eq:VPP_decomposition} to produce a decomposition $X = \bigcup_{d=0}^m X_d$, where $X_d$ is $d$-dimensional, the inclusions $X_0 \subset \cdots \subset X_m$ hold, $X_{d-1}$ is closed in $X_d$, and we can compute $P[X_0]$ and $P[X_d\setminus X_{d-1}]$ for each $d$.
Indeed, in this situation, we can compute $P[X]$ as
\begin{gather}
P[X]
=
P[X_0] + \sum_{d=1}^m P[X_d \setminus X_{d-1}].
\end{gather}
We will use this strategy to compute $p_r$ and $P_{(\bn^i)}$, where these are abbreviations for the following virtual Poincar\'{e} polynomials:
\begin{gather}
p_r
\coloneqq
P\Bigl[\ol M_r^\bC\Bigr],
\quad
P_{(\bn^i)}
\coloneqq
P\Biggl[\prod_i^{\ol M_r^\bC} \ol{2M}_{\bn^i}^\bC\Biggr].
\end{gather}

We will stratify $\ol M_r^\bC$ in terms of the root screen.
More specifically, on the root screen, there is an output marked point and $\geq 2$ nodes and input marked points.
The latter points give a partition of $\{1,\ldots,r\}$, and we stratify $\ol M_r^\bC$ by these partitions.
Similarly, we stratify $\ol{2M}_{(\bn^i)}^\bC$ in terms of how the lines and points have collided, from the root screen's perspective.
Using Schiessl's computation \cite{schiessl} of the virtual Poincar\'{e} polynomial of the ordered configuration space $F_\ell(\bC\setminus k)$ of $\ell$ points in $\bC \setminus k$, we can recursively compute the virtual Poincar\'{e} polynomial of each stratum, which allows us to algorithmically compute $p_r$ and $P_{(\bn^i)}$.
Of course, this is more than necessary, since we care primarily about $P[\ol{2M}_\bn^\bC]$.
The reason that we compute the virtual Poicar\'{e} polynomial of arbitrary fiber products of $\ol{2M}_\bn^\bC$ is that each stratum of $\ol{2M}_\bn^\bC$ decomposes as the product of copies of $F_\ell(\bC\setminus k)$ and of fiber products of $\ol{2M}_\bn^\bC$, which is key to our recursion.
The analogous statement is not true for a single $\ol{2M}_\bn^\bC$: each stratum of this space again decomposes as the product of copies of $F_\ell(\bC\setminus k)$ and of fiber products of $\ol{2M}_\bn^\bC$, so we would not be able to recurse.

\begin{table}
\centering
\begin{tabular}{|c|c||c|c|}
\hline
\multicolumn{2}{|c||}{$d=2$} & \multicolumn{2}{|c|}{$d=3$} \\
\hline
$\bn$ & $P(x)$ & $\bn$ & $P(x)$ \\
\hline
$(1,2)$ & $x^4 + 4x^2 + 1$ & $(0,0,0,2)$ & $x^6 + 12x^4 + 12x^2 + 1$ \\
\hline
$(0,1,1)$ & $x^4 + 3x^2 + 1$ & $(1,3)$ & $x^6 + 15x^4 + 12x^2 + 1$ \\
\hline
$(0,0,2)$ & $x^4 + 4x^2 + 1$ & $(0,1,2)$ & $x^6 + 10x^4 + 10x^2 + 1$ \\
\hline
$(0,0,0,1)$ & $x^4 + 5x^2 + 1$ & $(2,2)$ & $x^6 + 14x^4 + 11x^2 + 1$ \\
\hline
$(4)$ & $x^4 + 5x^2 + 1$ & $(5)$ & $x^6 + 16x^4 + 16x^2 + 1$ \\
\hline
$(0,3)$ & $x^4 + 5x^2 + 1$ & $(0,0,1,1)$ & $x^6 + 9x^4 + 9x^2 + 1$ \\
\hline
& & $(0,4)$ & $x^6 + 19x^4 + 16x^2 + 1$ \\
\hline
& & $(0,0,0,0,1)$ & $x^6 + 16x^4 + 16x^2 + 1$ \\
\hline
& & $(1,1,1)$ & $x^6 + 8x^4 + 8x^2 + 1$ \\
\hline
& & $(0,0,3)$ & $x^6 + 14x^4 + 14x^2 + 1$ \\
\hline
\end{tabular}
\medskip
\caption{\label{tab:VPP}}
\end{table}

For any $r \geq 1$ and $\bn^1,\ldots,\bn^a \in \bZ_{\geq0}^r\setminus\{\bzero\}$, define $\sP_r$ and $2\sP_{(\bn^i)}$ like so:
\begin{gather}
\sP_r
\coloneqq
\text{partitions of } \{1,\ldots,r\},
\\
2\sP_{(\bn^i)}
\coloneqq
\Bigl\{\Bigl(P,(2P_{p,i})_{{p \in P}\atop{1\leq i\leq a}}\Bigr)
\:\Big|\:
P \in \sP_r,
\:
2P_{p,i} \text{ a partition of } \prod_{j \in p} \bigl\{(i,j,1),\ldots,(i,j,n_j^i)\bigr\}
\Bigr\}
\nonumber
\end{gather}
(Our partitions are unordered, and by convention a partition of $S$ cannot contain $\emptyset$ unless $S = \emptyset$.)
For any $\bigl(P,(2P_{p,i})\bigr) \in 2\sP_{(\bn^i)}$, we denote by $\ul\#2P_{p,i}$ the vector which, for $j \in p$, has $(\ul\#2P_{p,i})_j$ equal to the number of elements of $2P_{p,i}$ of the form $(i,j,k)$.
Next, we define the stable elements of $\sP_r$ and $2\sP_{(\bn^i)}$:
\begin{gather}
\sP_r^\stab
\coloneqq
\bigl\{P \in \sP^r \:|\: \#P \geq 2\bigr\},
\\
2\sP_{(\bn^i)}^\stab
\coloneqq
\bigl\{\bigl(P,(2P_{p,i})\bigr) \in \sP_r \:|\: \#P \geq 2 \text{ or } P = \{\{1,\ldots,r\}\}, \#\bigl(P_{\{1,\ldots,r\},i}\bigr) \geq 2 \:\forall\: i\bigr\}.
\nonumber
\end{gather}
These sets of partitions allow us to stratify $\ol M_r^\bC$ and $\ol{2M}_\bn^\bC$, according to the type of the root component:
\begin{gather}
\ol M_r^\bC
=
\bigsqcup_{P \in \sP_r} \ol M_{r,P}^\bC,
\qquad
\ol{2M}_\bn^\bC
=
\bigsqcup_{(P,(2P_{p,i})) \in 2\sP_{(\bn^i)}} \ol{2M}_{\bn,(P,(2P_{p,i}))}^\bC.
\end{gather}

We can then make the following identifications for any $P \in \sP_r$ and $\bigl(P,(2P_{p,i})\bigr) \in 2\sP_{(\bn^i)}$:
\begin{gather}
\label{eq:strata_recurse_decomp}
\ol M_{r,P}^\bC
\simeq
\bigl(\bC^{\#P}\setminus\Delta\bigr)/_{\bC \rtimes \bC^*}
\times
\prod_{p \in P}
\ol M_{\#p}^\bC,
\\
\prod_i^{\ol M_r^\bC} \ol{2M}_{\bn^i}^\bC
\simeq
\bigl(\bC^{\#P}\setminus\Delta\bigr)/_{\bC\rtimes\bC^*}
\times
\prod_{1\leq i\leq a} \biggl(\prod_{p \in P} \bC^{\#2P_{p,i}}\setminus\Delta\biggr)/_\bC
\times
\prod_{p \in P}
\prod_{{1\leq i\leq a}
\atop
{2p \in 2P_{p,i}}}^{\ol M_{\#P}^\bC} \ol{2M}_{\ul\#2P_{p,i}}^\bC.
\nonumber
\end{gather}
To use \eqref{eq:strata_recurse_decomp} to recursively compute $p_r$ and $P_{(\bn^i)}$, we need the following result:

\medskip

\noindent
{\bf Theorem 4.1, \cite{schiessl}.}
{\it The virtual Poincar\'{e} polynomial of the unordered configuration space of $\ell$ points in $\bC^k$ is given by the following formula:
\begin{align}
\label{eq:VPP_Fn}
P\bigl[(\bC\setminus k)^\ell\setminus\Delta\bigr](x)
=
(x^2 - k)(x^2 - k - 1)\cdots(x^2 - k - n + 1).
\end{align}}

\medskip

\noindent
The following identities now follow from \eqref{eq:VPP_decomposition} and \eqref{eq:VPP_Fn}:
\begin{align}
P\Bigl[\ol M_{r,P}^\bC\Bigr]
&=
(x^2-2)(x^2-3)\cdots(x^2-\#P+1)
\prod_{2p \in 2P_{p,i}}
P\Bigl[\ol M_{\#p}^\bC\Bigr],
\\
P\Biggl[\prod_i^{\ol M_r^\bC} \ol{2M}_{\bn^i}^\bC\Biggr]
&=
(x^2-2)(x^2-3)\cdots(x^2-\#P+1)\times
\nonumber
\\
&\qquad
\times
\prod_{1\leq i\leq a}\biggl(\frac1{x^2}\prod_{p \in P} x^2(x^2-1)\cdots(x^2-\#2P_{p,i}+1)\biggr)
\prod_{p \in P} P\Biggl[\prod_{{1\leq i\leq a}
\atop
{2p \in 2P_{p,i}}}^{\ol M_{\#P}^\bC} \ol{2M}_{\ul\#2P_{p,i}}^\bC\Biggr].
\nonumber
\end{align}
These identities allow us to recursively compute $p_r$ and $P_{(\bn^i)}$ in \cite{bo:program}.


\begin{thebibliography}{10}

\bibitem[BiMi]{bier_mil:toric_binomial}
E.~Bierstone, P.~Milman.
\newblock Desingularization of toric and binomial varieties.
\newblock {\it Journal of Algebraic Geometry} 15 (2006), no.\ 3, 443--486.

\bibitem[Bo1]{b:2ass}
N.~Bottman.
\newblock 2-associahedra.
\newblock {\it Algebraic \& Geometric Topology} 19 (2019), no.\ 2, 743--806.

\bibitem[Bo2]{b:realization}
N.~Bottman.
\newblock Moduli spaces of witch curves topologically realize the 2-associahedra.
\newblock Accepted (2018), {\it J.\ Symp.\ Geo.}

\bibitem[BoOb1]{bo:program}
N.~Bottman, A.~Oblomkov.
\newblock A Python implementation of a recursive algorithm to compute the virtual Poincar\'{e} polynomial of $\ol{2M}_\bn^\bC$, 281 lines.
\newblock Available at \url{https://github.com/natebottman/complex-2associahedra/blob/master/VP_of_2Mn-bar.py}.

\bibitem[BoOb2]{bo:fm}
N.~Bottman, A.~Oblomkov.
\newblock A version of Fulton--MacPherson compactification for pairs $X \to Y$.
\newblock In preparation

\bibitem[ChGiKr]{cgk}
L.~Chen, A.~Gibney, D.~Krashen.
\newblock Pointed trees of projective spaces.
\newblock {J.\ Algebraic Geom.} 18 (2009), no.\ 3, 477--509.

\bibitem[EiSt]{es:binomial}
D.~Eisenbud, B.~Sturmfels.
\newblock Binomial ideals.
\newblock {\it Duke Math.\ J.} 84 (1996), no.\ 1 (1996), 1--45.

\bibitem[Fu]{fulton:toric}
W.~Fulton.
\newblock {\it Introduction to toric varieties.}
\newblock Annals of Mathematics Studies, 131.
\newblock {\it Princeton University Press, Princeton, NJ}, 1993.

\bibitem[FuMa]{fm}
W.~Fulton, R.~MacPherson.
\newblock A compactification of configuration spaces.
\newblock {\it Annals of Mathematics} 139 (1994), no.\ 1, pp.\ 183--225.

\bibitem[KiLhRu]{helge}
B.~Kim, H.~Lho, H.~Ruddat.
\newblock The degeneration formula for stable log maps.
\newblock Preprint, accessible at \url{https://arxiv.org/abs/1803.04210}.

\bibitem[Le]{lee:top_man}
J.~M.~Lee.
\newblock {\it Introduction to Topological Manifolds.}
\newblock First edition.
\newblock Graduate Texts in Mathematics, 202.
\newblock {\it Springer-Verlag, New York}, 2000.

\bibitem[Ke]{keel} 
S. ~Keel, {\it Intersection theory of moduli space of stable n-pointed curves of genus zero.} Trans. Amer. Math. Soc. 330 (1992), no. 2, 545--574.
  
\bibitem[Li]{Li09}
L.~Li.
\newblock Wonderful compactification of an arrangement of subvarieties.
\newblock {\it Michigan Mathematics Journal} 58 (2009), no.\ 2, 535--563.

\bibitem[Sc]{schiessl}
C.~Schiessl.
\newblock Configuration spaces of $\bC \setminus k$.
\newblock Preprint, accessible at \url{https://arxiv.org/abs/1801.09327}.

\bibitem[Os1]{osserman:atlas}
B.~Osserman.
\newblock Abstract varieties via atlases.
\newblock Available at \url{https://www.math.ucdavis.edu/~osserman/classes/248A-F13/varieties-atlases.pdf}.

\bibitem[Os2]{osserman:complex}
B.~Osserman.
\newblock Complex varieties and the analytic topology.
\newblock Available at \url{https://www.math.ucdavis.edu/~osserman/classes/248B-W12/notes/analytic.pdf}.


\bibitem[Ul]{ul}
  A.~Ulyanov,{\it Polydiagonal compactification of configuration spaces}. J. Algebraic Geom. 11 (2002), no. 1, 129--159.
  
\bibitem[Wi]{willard}
S.~Willard.
\newblock {\it General Topology.}
\newblock {\it Addison-Wesley Publishing Co., Reading, Mass.-London-Don Mills, Ont.}
369 pp.

\end{thebibliography}
\end{document}